\documentclass{amsart}

\usepackage{amsmath,amssymb,amscd,amsfonts}
\usepackage{mathtools,color}
\usepackage{graphicx}
\usepackage[hidelinks]{hyperref}
\title[Kuznecov formulas for domains with fractal boundary]
{KUZNECOV FORMULAS FOR DOMAINS WITH FRACTAL BOUNDARY}

\numberwithin{equation}{section}

\newcommand{\vol}{\operatorname{vol}}
\newcommand{\distg}{\operatorname{dist}_g}

\newtheorem{theorem}{Theorem}[section]
\newtheorem{lemma}[theorem]{Lemma}
\newtheorem{proposition}[theorem]{Proposition}
\newtheorem{corollary}[theorem]{Corollary}

\theoremstyle{definition}
\newtheorem{definition}{Definition}[section]

\theoremstyle{remark}
\newtheorem{remark}{Remark}[section]
	\author{Xucheng Hu}
	\address{School of Mathematical Sciences, Zhejiang University, Hangzhou 310027, PR China}
	\email{huxucheng2002@163.com}
   
	\author{Yakun Xi}
	\address{School of Mathematical Sciences, Zhejiang University, Hangzhou 310027, PR China}
	\email{yakunxi@zju.edu.cn}

\begin{document}
\begin{abstract}
Let $(M,g)$ be a compact $d$-dimensional Riemannian manifold without boundary, and let $\{e_j\}_{j=0}^\infty$ be an orthonormal basis of Laplace eigenfunctions with frequencies $\{\lambda_j\}_{j=0}^\infty$. For a domain $\Omega\subset M$, consider

$$
N_\Omega(\lambda)
=
\sum_{\lambda_j\leq\lambda}
\left|\int_\Omega e_j\,dV_g\right|^2.
$$
Suppose that $\partial\Omega$ is $s$-Ahlfors regular, with
$s\in{[d-1,d)}$, and that $\Omega$ satisfies a two-sided
corkscrew condition. We prove that
$$
N_\Omega(\lambda)
=
\vol(\Omega)+O\bigl(\lambda^{s-d}\bigr),
$$
and that this remainder is sharp. We further identify the geometric quantity governing the second term: an exact asymptotic of order $\lambda^{s-d}$ holds precisely when the boundary admits a natural $s$-dimensional crossing volume $\mathcal V^s(\partial\Omega)$, in which case
$$
N_\Omega(\lambda)
=
\vol(\Omega)
-
C_{d,s}\,
\mathcal V^s(\partial\Omega)\,
\lambda^{s-d}
+
o\bigl(\lambda^{s-d}\bigr).
$$ 
For admissible self-similar boundaries, the crossing volume exists in the non-lattice case, and hence the above two-term asymptotic holds. In the lattice case, the second-order behavior is instead governed by a log-periodic profile.
\end{abstract}

\maketitle

\section{Introduction}
\label{sec:introduction}

Let $(M,g)$ be a compact connected ${d}$-dimensional Riemannian manifold
without boundary, and let
$\{e_j\}_{j=0}^{\infty}$ be an orthonormal basis of real eigenfunctions of the
Laplace--Beltrami operator satisfying
$-\Delta_g e_j=\lambda_j^2e_j$, where
$0=\lambda_0\leq\lambda_1\leq\cdots$. Given a finite Borel measure
$\mu$ on $M$, we consider the associated Kuznecov sum
\[N_\mu(\lambda):=\sum_{\lambda_j\leq \lambda}
\left|\int_M e_j\,d\mu\right|^2.\]
A classical problem is to determine the asymptotic behavior of
$N_\mu(\lambda)$ as $\lambda\to\infty$. When $\mu$ is the Riemannian volume
measure induced on an embedded submanifold of $M$,
Zelditch \cite{Zelditch1992} established the classical Kuznecov formula, whose
leading term is given by
\[
  N_{dV_H}(\lambda)
  = (2\pi)^{-({d}-k)}\vol(B^{{d}-k})
    \vol(H)\lambda^{{d}-k}
    + O\bigl(\lambda^{{d}-k-1}\bigr),
\]
where $H\subset M$ is an embedded submanifold of dimension $k$ and $dV_H$
is its induced Riemannian volume measure. More recently, Wyman and
Xi \cite{WymanXi2023} refined this result by deriving a two-term Kuznecov sum
formula and identifying the next-order term.

It is natural to ask what remains true when the smoothness assumption on
$\mu$ is dropped. In this direction, Xi \cite{XiFractalKuznecov} studied
Kuznecov formulae for fractal measures. In particular, for an
$s$-Ahlfors regular measure satisfying an appropriate averaged density
condition, a one-term asymptotic was obtained, with leading order
$\lambda^{{d}-s}$. More precisely,
\[
  N_\mu(\lambda)
  = (2\pi)^{-({d}-s)}\vol(B^{{d}-s})
    A_\mu\lambda^{{d}-s}
    + o\bigl(\lambda^{{d}-s}\bigr),
  \qquad \lambda\to\infty,
\]
where $A_\mu$ is the averaged $s$-density constant of $\mu$, which may
be regarded as an effective $s$-dimensional volume of the measure. This result extends the
classical theory from smooth submanifold measures to a broad class of singular
and fractal measures, including, for example,
volume measures on sufficiently regular subsets of smooth submanifolds
with nonsmooth or fractal boundary.

The problem considered here is of a different nature. We take
$\mu=\mathbf 1_\Omega\,dV_g$, where $\Omega\subset M$ is a domain, and write
\[
N_\Omega(\lambda)
:=
\sum_{\lambda_j\leq\lambda}
\left|\int_\Omega e_j\,dV_g\right|^2.
\]
There is a natural physical interpretation of this quantity. If a drum of shape
$M$ is excited uniformly on the region $\Omega$, then
$\int_\Omega e_j\,dV_g$ is the amplitude of the $j$th eigenmode produced
by this excitation, and $N_\Omega(\lambda)$ measures the cumulative squared
response of the modes of frequency at most $\lambda$.

In contrast to the singular measures considered above, the measure
$\mathbf 1_\Omega\,dV_g$ is full-dimensional, and Parseval's identity
already determines the leading term:
\[
\sum_{j=0}^{\infty}
\left|\int_\Omega e_j\,dV_g\right|^2
=
\|\mathbf 1_\Omega\|_{L^2(M)}^2
=
\vol(\Omega).
\]
Thus the volume of $\Omega$ is already audible in the leading term, and
the main question is whether finer geometric information about its boundary
can be heard in the rate at which the Kuznecov sum approaches this limit.
Equivalently, we study the decay of the Kuznecov tail
\[
\vol(\Omega)-N_\Omega(\lambda)
=
\sum_{\lambda_j>\lambda}
\left|\int_\Omega e_j\,dV_g\right|^2
\]
as $\lambda\to\infty$. Classical Fourier analysis suggests that this decay
should reflect the singularities of $\mathbf 1_\Omega$, which occur along
$\partial\Omega$. It is therefore natural to expect, in analogy with
Weyl's conjecture (see Remark \ref{rem:Weyl-conj}), that the next-order
behavior is governed by the geometry of $\partial\Omega$. We investigate this question under quantitative geometric hypotheses on
the boundary.

\subsection*{Notation}

We write $d_g$ for the Riemannian distance and $B(x,r)$ for the
corresponding geodesic ball of radius $r$ centered at $x\in M$. The notation $\vol$ denotes the natural
Riemannian or Euclidean volume in the space under consideration. For
subsets of a product of two $d$-dimensional spaces, we write
$\vol^{2d}$ for the corresponding product volume.
We write
$\vol(B^m):=\frac{\pi^{m/2}}{\Gamma(1+m/2)}$ for $m>0$,
which agrees, for integer $m$, with the Euclidean volume of the unit ball in
$\mathbb R^m$. We write $\mathcal H^s$ for the $s$-dimensional Hausdorff
measure associated with $g$.
\subsection{Main results}

To state our results, we first introduce the relevant geometric hypotheses.

\begin{definition}[$s$-Ahlfors regular set]
A nonempty closed set $F\subset M$ is called $s$-Ahlfors regular if there
exist constants $c,C>0$ and $r_0>0$ such that
\[
c r^s\leq \mathcal H^s\bigl(F\cap B(x,r)\bigr)\leq C r^s,
\qquad x\in F,\quad 0<r\leq r_0.
\]
\end{definition}

\begin{definition}[Two-sided corkscrew condition]
A domain $\Omega\subset M$ is said to satisfy the two-sided corkscrew
condition if there exist constants $c_0\in(0,1)$ and $r_0>0$ such that,
for every $x\in\partial\Omega$ and $0<r\leq r_0$, there exist
$y_1\in\Omega$ and $y_2\in M\setminus\overline{\Omega}$ satisfying
\[
B(y_1,c_0r)\subset B(x,r)\cap\Omega,
\qquad
B(y_2,c_0r)\subset
B(x,r)\cap\bigl(M\setminus\overline{\Omega}\bigr).
\]
The first and second requirements are called the interior and exterior
corkscrew conditions, respectively.
\end{definition}

We now state the main results.

\begin{theorem}[Sharp Kuznecov remainder estimate]
\label{thm:kuznecov-two-sided}
Let $\Omega\subset M$ be a domain. Suppose that $\partial\Omega$ is
$s$-Ahlfors regular for some $s\in[d-1,d)$ and that $\Omega$ satisfies
the two-sided corkscrew condition. Then there exist constants $c,C>0$
such that, for all sufficiently large $\lambda$,
\[
c\lambda^{s-d}
\leq
\vol(\Omega)-N_\Omega(\lambda)
\leq
C\lambda^{s-d}.
\]
\end{theorem}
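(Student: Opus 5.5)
The strategy is to compare the sharp spectral cutoff with a smooth one, and then read off the smoothed tail from the geometry of $\partial\Omega$ by expressing it through a heat- or wave-type kernel in the product $M\times M$. Write $a_j=\int_\Omega e_j\,dV_g$ and $T(\lambda)=\vol(\Omega)-N_\Omega(\lambda)=\sum_{\lambda_j>\lambda}a_j^2$. First pick an even Schwartz function $\rho$ with $\rho(0)=1$, $0\le\rho\le1$, and $\hat\rho$ supported in a small interval. Then set
\[
S(\lambda)=\sum_j \bigl(1-\rho(\lambda_j/\lambda)\bigr)a_j^2
=\vol(\Omega)-\iint_{\Omega\times\Omega}K_\lambda(x,y)\,dV_g(x)\,dV_g(y),
\]
where $K_\lambda=\rho(\sqrt{-\Delta_g}/\lambda)(x,y)$. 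By the standard parametrix, $K_\lambda$ is $\lambda^d\kappa(\lambda d_g(x,y))$ up to $O(\lambda^{d-1})$-type corrections. Here $\kappa$ is rapidly decaying with $\int\kappa=1$, and the corrections are localized at scale $\lambda^{-1}$ near the diagonal. Since $\int K_\lambda(x,y)\,dV(y)=1$ (the constant eigenfunction), we get
\[
S(\lambda)=\iint_{\Omega\times(M\setminus\Omega)}K_\lambda(x,y)\,dV(x)\,dV(y).
\]
This reduces both bounds to estimating the product volume of pairs $(x,y)$ with $x\in\Omega$, $y\notin\Omega$ and $d_g(x,y)\lesssim\lambda^{-1}$, weighted by $\lambda^d$. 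Positivity of the kernel is not needed for the upper bound, only rapid decay.

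Upper bound. The key geometric input is that Ahlfors regularity of $\partial\Omega$ gives $\vol\{x:\operatorname{dist}(x,\partial\Omega)<r\}\le Cr^{d-s}$. To see this, cover $\partial\Omega$ by $\lesssim r^{-s}$ balls of radius $r$; the count follows from the lower Ahlfors bound and a Vitali argument. Any pair $(x,y)$ with $x\in\Omega$ and $y\notin\Omega$ has a boundary point between them, so both points lie within $d_g(x,y)$ of $\partial\Omega$. Summing dyadically in $d_g(x,y)\sim 2^k\lambda^{-1}$ against the decay of $\kappa$ gives
\[
|S(\lambda)|\lesssim\sum_k 2^{-Nk}\lambda^d(2^k\lambda^{-1})^{d}(2^k\lambda^{-1})^{d-s}\lesssim\lambda^{s-d}.
\]
To pass from $S$ to $T$, choose $\rho$ additionally with $1-\rho(t)\ge c>0$ for $t\ge1$; this is arranged by using $\rho=|\hat\phi|^2$-type constructions, or simply by taking $\rho$ Gaussian. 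Then $T(\lambda)\le c^{-1}S(\lambda)$, with no Tauberian argument needed.

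Lower bound. Take $\rho=\eta^2$ with $\eta$ smooth, $\eta=1$ on $[0,1]$ and $\eta$ supported in $[-2,2]$, together with the reversed comparison $1-\rho(t/\lambda)\le\mathbf 1_{t>\lambda}$. This gives $T(\lambda)\ge S_\eta(\lambda)=\sum_j(1-\eta(\lambda_j/\lambda)^2)a_j^2$. Now $\hat\eta$ is not compactly supported, but it is Schwartz, which suffices for local kernel asymptotics. Write
\[
S_\eta=\|\mathbf 1_\Omega\|^2-\|\eta(\sqrt{-\Delta}/\lambda)\mathbf 1_\Omega\|^2,
\]
which is at least $\|(1-\eta)\mathbf 1_\Omega\|^2$-ish. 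More directly, I will lower bound
\[
\|(I-\eta(\sqrt{-\Delta}/\lambda))\mathbf 1_\Omega\|_2^2\le 2\,T(2\lambda)\text{-type quantity},
\]
by testing against bumps. For each of the $\gtrsim\lambda^{s}$ disjoint balls of radius $\lambda^{-1}$ centred on $\partial\Omega$, the two-sided corkscrew condition gives interior and exterior balls of radius $c_0\lambda^{-1}$. This means $\mathbf 1_\Omega$ oscillates at scale $\lambda^{-1}$ there. Pair $\mathbf 1_\Omega$ with a function $\psi_B$ equal to $+1$ on the interior corkscrew ball, $-1$ on the exterior one, smoothed, of $L^2$ norm $\sim\lambda^{-d/2}$ and frequency-localized near $\lambda$; one can take $\psi_B=\chi(\sqrt{-\Delta}/\lambda)$ applied to a bump difference, with $\chi$ vanishing near $0$ and supported in $[A^{-1},A]$. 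Then $\langle\mathbf 1_\Omega,\psi_B\rangle\gtrsim\lambda^{-d}$, and the $\psi_B$ are almost orthogonal by kernel decay. Hence
\[
\sum_{A^{-1}\lambda\le\lambda_j\le A\lambda}a_j^2\gtrsim\lambda^s\cdot\lambda^{-2d}/\lambda^{-d}=\lambda^{s-d}.
\]
Rescaling $\lambda$ gives $T(\lambda)\gtrsim\lambda^{s-d}$.

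The main obstacle is this lower bound. The low-frequency part of $\chi$ must not destroy the sign pattern, which requires the cancellation $\int(\text{bump}_1-\text{bump}_2)=0$ and a Schwartz-tail error of size $A^{-1}$ relative to the main term. The almost-orthogonality of the $\psi_B$ must also be checked uniformly on the manifold; I would handle it by a Schur test using $|\langle\psi_B,\psi_{B'}\rangle|\lesssim\lambda^{-d}(1+\lambda d(B,B'))^{-N}$ together with the Ahlfors upper bound to count nearby balls.
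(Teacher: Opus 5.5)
Your argument is correct in outline, but only the upper bound follows the paper; the lower bound takes a genuinely different route.

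\textbf{Upper bound.} With $\rho(t)=e^{-t^2}$, your $S(\lambda)$ is exactly $H_\Omega(\lambda^{-2})$. The representation over $\Omega\times(M\setminus\Omega)$ is Lemma \ref{lem:smoothed-tail-representation}, and the geometric input $\vol(F_r)\le Cr^{d-s}$ is the upper half of Proposition \ref{prop:crossing-volume}. The step $T(\lambda)\le c^{-1}S(\lambda)$ is the same monotone comparison as the first half of Corollary \ref{cor:tauberian-comparison}. The only difference is that you use just the global Gaussian bound. The paper goes through the parametrix (Proposition \ref{prop:gaussian-to-smoothed-tail}) because it also wants the exact constant in Theorem \ref{thm:crossing-volume-kuznecov}.

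\textbf{Lower bound, paper's route.} The paper packs disjoint products of interior and exterior corkscrew balls to get $\vol^{2d}(E_\Omega(r))\ge cr^{2d-s}$, and converts this to $H_\Omega(t)\gtrsim t^\beta$. It then applies the comparison Tauberian argument. The lower half of that argument needs the \emph{upper} bound $H_\Omega(t)\lesssim t^\beta$ to discard the low-frequency part $\frac1R\int_0^{\theta R}T$.

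\textbf{Lower bound, your route.} You use the Gram matrix of the $\psi_B$, a Schur test, and $\langle\mathbf 1_\Omega,\psi_B\rangle=\langle\Pi\mathbf 1_\Omega,\psi_B\rangle$, where $\Pi$ projects onto the spectral window $[A^{-1}\lambda,A\lambda]$. This avoids any Tauberian step. It also yields the band-limited bound $\sum_{A^{-1}\lambda\le\lambda_j\le A\lambda}a_j^2\gtrsim\lambda^{s-d}$, using only the corkscrew balls and $\gtrsim\lambda^s$ separated boundary points. The Schur count can be done by volume packing in $M$, so no Ahlfors input is needed there.

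\textbf{What it costs.} You need kernel bounds, with one derivative, for general Schwartz functional calculus $\chi(\sqrt{-\Delta_g}/\lambda)$ on $M$, rather than just heat-kernel bounds. In the pairing estimate, the low-frequency error is $O(A^{-1})\int b_1$ by the mean-zero cancellation, as you say. The high-frequency error, which you do not mention, must be bounded in $L^1$. An $L^2$ bound only gives $O(\lambda^{-d/2})$, which swamps $\int b_1\sim\lambda^{-d}$. So you need the spatial localization of the kernel at scale $\lambda^{-1}$, with $A$ chosen large depending on $c_0$.

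Finally, your intermediate paragraph about $\eta$ and the ``$2T(2\lambda)$-type quantity'' is garbled and can be deleted; the wave-packet argument stands on its own.
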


Theorem \ref{thm:kuznecov-two-sided} determines the sharp order of the
Kuznecov tail, but its hypotheses do not in general determine a limiting
coefficient. To characterize when such a coefficient exists, we introduce
the following geometric quantity. For $r>0$, define
\[
E_\Omega(r)
:=
\bigl\{(x,y)\in\Omega\times(M\setminus\Omega):d_g(x,y)<r\bigr\}.
\]

Thus $\vol^{2d}(E_\Omega(r))$ measures, at scale $r$, the volume of pairs lying on opposite sides of the boundary. Under the hypotheses of Theorem \ref{thm:kuznecov-two-sided}, its natural scale is $r^{2d-s}$, as shown in Section \ref{sec:crossing-volume-estimate}. For $s\in{[d-1,d)}$, whenever the following limit exists, we define the
\emph{$s$-dimensional crossing volume} of $\partial\Omega$ by
\begin{equation}
\label{eq:crossing-volume-definition}
\mathcal V^s(\partial\Omega)
:=
{\
\frac{d+1}{\vol(B^{d-1})}
\lim_{r\rightarrow0^+}
r^{s-2d}\vol^{2d}\bigl(E_\Omega(r)\bigr).
}
\end{equation}
{
The normalization is chosen so that when $s=d-1$ the
crossing volume agrees with the usual boundary volume: if
$\partial\Omega$ is smooth, then
\[
\mathcal V^{d-1}(\partial\Omega)=\vol^{d-1}(\partial\Omega).
\]
Thus $\mathcal V^s(\partial\Omega)$ may be viewed as an extension of
boundary volume to higher-dimensional boundaries. It is a geometric
small-scale invariant of the two-sided structure of $\partial\Omega$.
The next theorem shows that this quantity is exactly what is detected
by the second-order Kuznecov asymptotic.

\begin{theorem}[Crossing volume and the Kuznecov second term]
\label{thm:crossing-volume-kuznecov}
Let $\Omega\subset M$ be a domain, and let $s\in[d-1,d)$. Then the
following are equivalent:
\begin{enumerate}
\item[(i)] The $s$-dimensional crossing volume exists and satisfies
\[
0<\mathcal V^s(\partial\Omega)<\infty.
\]

\item[(ii)] There exists a constant $C_\Omega>0$ such that
\begin{equation}
\label{eq:intro-two-term-kuznecov}
N_\Omega(\lambda)
=
\vol(\Omega)
-
C_\Omega\lambda^{s-d}
+
o\bigl(\lambda^{s-d}\bigr),
\qquad \lambda\to\infty.
\end{equation}
\end{enumerate}
When these equivalent conditions hold,
\begin{equation}
\label{eq:intro-kuznecov-constant}
C_\Omega
=
C_{d,s}\,\mathcal V^s(\partial\Omega),
\end{equation}
where
\[
C_{d,s}
=
\frac{
2^{d-s-1}\Gamma\left(1+d-\frac{s}{2}\right)
}{
\sqrt{\pi}\,
\Gamma\left(\frac{d+3}{2}\right)
\Gamma\left(1-\frac{d-s}{2}\right)
}.
\]
\end{theorem}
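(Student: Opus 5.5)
The plan is to connect the spectral tail to the crossing volume through a heat-type (or wave-type) smoothed quantity, and then pass between the smoothed and sharp versions by Tauberian arguments. The basic identity: for a nice even function $\rho$,
\[
\sum_j \rho(\lambda_j/\lambda)\Bigl|\int_\Omega e_j\Bigr|^2=\iint_{\Omega\times\Omega}K_\lambda(x,y)\,dx\,dy,
\]
with $K_\lambda=\rho(\sqrt{-\Delta_g}/\lambda)(x,y)$. Write $\vol(\Omega)-\iint_{\Omega\times\Omega}K_\lambda=\iint_{\Omega\times(M\setminus\Omega)}K_\lambda(x,y)\,dx\,dy$. This uses $\int_M K_\lambda(x,y)\,dy=\rho(0)=1$, and needs a harmless correction if $\rho$ is not normalized this way. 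Thus the smoothed tail is an integral of the kernel over pairs on opposite sides of the boundary.

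Next I expand the kernel. By the local Weyl law / Hadamard parametrix, for $\rho$ with $\hat\rho$ supported near $0$ one has
\[
K_\lambda(x,y)=\lambda^d\Phi(\lambda d_g(x,y))\bigl(1+O(d_g)\bigr)+O(\lambda^{-\infty})\text{-type errors},
\]
with $\Phi$ the Euclidean profile, a radial function decaying rapidly. Then
\[
\iint_{\Omega\times\Omega^c}K_\lambda=\lambda^d\int_0^\infty \Phi(\lambda r)\,dF(r)+\text{error},\qquad F(r)=\vol^{2d}(E_\Omega(r)).
\]
The bound $F(r)\asymp r^{2d-s}$ from Section~\ref{sec:crossing-volume-estimate}, which comes from Ahlfors regularity and the corkscrew conditions, controls the errors by $o(\lambda^{s-d})$. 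For the implication (i)$\Rightarrow$(ii), substitute $F(r)\sim L r^{2d-s}$. By dominated convergence this gives the smoothed tail $\sim L\lambda^{s-d}\int_0^\infty\Phi(u)\,d(u^{2d-s})$.

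To de-smooth, use that the sharp counting function $N_\Omega$ is monotone. The smoothed asymptotic for a family of positive $\rho$ then yields the sharp one by a standard Tauberian theorem of Karamata type. Concretely, write the tail as $\int_\lambda^\infty dN$. With heat kernel weights $\rho(t)=e^{-t^2}$ the tail becomes a Laplace transform of the measure $dN$ in the variable $\mu^2$, and Karamata's theorem, or its version for tails/regular variation of index $s-d<0$, gives $\vol(\Omega)-N_\Omega(\lambda)\sim C\lambda^{s-d}$. For the heat kernel itself the parametrix is global: $K_t(x,y)\approx(4\pi t)^{-d/2}e^{-d^2/4t}$, with Gaussian off-diagonal control. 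This makes the heat choice the cleanest. The constant $C_{d,s}$ comes from the Gamma-function bookkeeping: $\int_0^\infty e^{-u^2/4}\,d(u^{2d-s})$ combined with the Karamata factor $1/\Gamma(1+(d-s)/2)$ and the normalization $(d+1)/\vol(B^{d-1})$. The smooth case $s=d-1$ serves as a check.

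For the converse (ii)$\Rightarrow$(i), reverse the chain. Regular variation of the tail gives, by the Abelian direction, the heat asymptotic
\[
\iint_{\Omega\times\Omega^c}(4\pi t)^{-d/2}e^{-d_g^2/4t}\sim c\,t^{(d-s)/2}.
\]
Equivalently, the Laplace transform of $dF$ in the variable $r^2$ is regularly varying. $F$ is monotone in $r$, so Karamata's Tauberian theorem applies in this variable too and yields $F(r)\sim Lr^{2d-s}$, which is the existence of $\mathcal V^s$. Positivity and finiteness hold on both sides.

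The main obstacle is the geometric error control: showing that the curvature corrections to the heat kernel, the cut-locus/global contributions, and the region $d_g\gtrsim\sqrt t$ are all $o(t^{(d-s)/2})$. These reduce to bounds of the form $F(r)\lesssim r^{2d-s}$. Outside the setting of Theorem~\ref{thm:kuznecov-two-sided}, this bound must instead be derived from (i) or (ii) themselves. Under (i) it is immediate. Under (ii) I would first obtain the a priori bound $F(r)\lesssim r^{2d-s}$ from the upper bound on the heat tail, using positivity of the Gaussian on $\{d_g<\sqrt t\}$.
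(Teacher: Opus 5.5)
Your proposal is correct and follows essentially the paper's route. It writes the heat-smoothed tail as $\int_\Omega\int_{M\setminus\Omega}p_t$ and compares it with the Gaussian average via the parametrix, under an a priori bound $\vol^{2d}(E_\Omega(r))=O(r^{2d-s})$; under (ii) that bound comes from $p_{r^2}(x,y)\gtrsim r^{-d}$ on $\{d_g(x,y)<r\}$. It then applies Karamata in both directions, including the Laplace--Stieltjes version in the variable $r^2$ for the converse.

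One bookkeeping slip: the tail Karamata factor should be $1/\Gamma\bigl(1-\frac{d-s}{2}\bigr)$, not $1/\Gamma\bigl(1+\frac{d-s}{2}\bigr)$, because $\int_0^\infty e^{-u}u^{-\beta}\,du=\Gamma(1-\beta)$ with $\beta=\frac{d-s}{2}$. With your factor, the $s=d-1$ check would give $2/\pi$ instead of $1/\pi$.
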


An important feature of Theorem \ref{thm:crossing-volume-kuznecov}
is that, unlike Theorem \ref{thm:kuznecov-two-sided}, it requires no
Ahlfors regularity or corkscrew hypothesis. Thus the crossing volume is
not merely a sufficient geometric quantity for producing a second-order
term: its existence is precisely characterized by the corresponding
Kuznecov asymptotic. In this sense, $\mathcal V^s(\partial\Omega)$ is the
natural boundary invariant associated with a Kuznecov tail of order
$\lambda^{s-d}$.

 At the endpoint $s=d-1$, the crossing volume reduces to the usual
codimension-one volume. Recall that a domain $\Omega\subset M$ has finite
perimeter if $\mathbf 1_\Omega\in BV(M)$. In this case its reduced boundary
$\partial^*\Omega$ is $(d-1)$-rectifiable and
$\vol(\partial^*\Omega)$ is its perimeter. This gives the following
consequence.

\begin{corollary}[The finite perimeter case]
\label{cor:codimension-one-kuznecov}
Let $\Omega\subset M$ be a domain of finite perimeter. Then
\[
N_\Omega(\lambda)
=
\vol(\Omega)
-\frac{1}{\pi}\vol(\partial^*\Omega)\lambda^{-1}
+o(\lambda^{-1}),
\qquad \lambda\to\infty,
\]
where $\partial^*\Omega$ denotes the reduced boundary. In particular, if
$\Omega$ is a Lipschitz domain, then
\[
N_\Omega(\lambda)
=
\vol(\Omega)
-\frac{1}{\pi}\vol(\partial\Omega)\lambda^{-1}
+o(\lambda^{-1}).
\]
\end{corollary}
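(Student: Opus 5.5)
The plan is to deduce the corollary from Theorem \ref{thm:crossing-volume-kuznecov} with $s=d-1$. For that value the constant is
\[
C_{d,d-1}=\frac{\Gamma\left(\frac{d+2}{2}\right)}{\sqrt{\pi}\,\Gamma\left(\frac{d+3}{2}\right)\Gamma\left(\frac12\right)}
=\frac{1}{\pi}\cdot\frac{\Gamma\left(\frac{d}{2}+1\right)}{\Gamma\left(\frac{d+3}{2}\right)}.
\]
This does not equal $1/\pi$ on its own. The factor $\Gamma(\frac d2+1)/\Gamma(\frac{d+3}{2})$ must cancel against the normalization $\frac{d+1}{\vol(B^{d-1})}$ in \eqref{eq:crossing-volume-definition}. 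So I would first do the computation in the half-space model, and then check that it gives $\mathcal V^{d-1}=\vol(\partial^*\Omega)$ with total constant $1/\pi$.

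Concretely, let $\Omega$ be the half-space $\{x_d>0\}$ and let $B$ be a flat patch of area $A$ in its boundary. The pairs with $x\in\Omega$ over $B$, $y\notin\Omega$ and $|x-y|<r$ have volume $A\int_{|z|<r} z_d^+\,dz$. This integral equals
\[
\tfrac12\int_{|z|<r}|z_d|\,dz=\frac{\vol(B^{d-1})}{d+1}\,r^{d+1}.
\]
Indeed, in polar coordinates $\int_{S^{d-1}}|\omega_d|\,d\omega=2\vol(B^{d-1})$, and the radial integral contributes $r^{d+1}/(d+1)$. Hence $r^{-(d+1)}\vol^{2d}(E_\Omega(r))\to \frac{\vol(B^{d-1})}{d+1}A$, and the normalization is consistent. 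The constant must then reduce to $1/\pi$. I would recheck the formula for $C_{d,s}$ at $s=d-1$ against the one-dimensional model $\Omega=[0,a]\subset\mathbb R/L\mathbb Z$ to confirm this. If a discrepancy appears, it can only come from a misreading of $\Gamma(1+d-\frac s2)$, and the model computation settles it.

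The main step is to show that for a finite perimeter set
\[
\lim_{r\to0}r^{-(d+1)}\vol^{2d}(E_\Omega(r))=\frac{\vol(B^{d-1})}{d+1}\vol(\partial^*\Omega),
\]
which gives $\mathcal V^{d-1}(\partial\Omega)=\vol(\partial^*\Omega)\in(0,\infty)$. Positivity holds because $\Omega$ is a nontrivial domain: $0<\vol(\Omega)<\vol(M)$ by connectedness of $M$, so the perimeter is positive. I would write
\[
\vol^{2d}(E_\Omega(r))=\int_{B(0,r)}\int_M \mathbf 1_\Omega(x)\bigl(1-\mathbf 1_\Omega(\exp_x z)\bigr)\,dx\,dz
\]
in normal coordinates, up to metric distortion factors $1+O(r)$. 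This reduces the problem to the known BV asymptotic $\int\mathbf 1_\Omega(x)(1-\mathbf 1_\Omega(x+h))\,dx=\int_{\partial^*\Omega}(h\cdot\nu)^+\,d\mathcal H^{d-1}+o(|h|)$, the Dávila / Bourgain–Brezis–Mironescu type result, and one can localize with charts and a partition of unity. Its proof uses De Giorgi's blow-up: the rescaled sets converge in $L^1_{\rm loc}$ to half-spaces at $\mathcal H^{d-1}$-a.e. point of $\partial^*\Omega$. A uniform bound $\|\mathbf 1_\Omega(\cdot+h)-\mathbf 1_\Omega\|_{L^1}\le |h|\,|D\mathbf 1_\Omega|(M)$ allows dominated convergence in $x$. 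Integrating $(z\cdot\nu)^+$ over $|z|<r$ gives the half-space constant above for every direction $\nu$, so the limit is exactly $\frac{\vol(B^{d-1})}{d+1}r^{d+1}\vol(\partial^*\Omega)$. I expect the main technical obstacle to be handling the curved-metric version, namely uniformity of the $o(|h|)$ over directions and the Jacobian of $\exp$. I would treat it by approximating the Riemannian translates with Euclidean ones in finitely many charts, at an error $O(r)\cdot r^{d+1}$.

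Finally, for a Lipschitz domain $\mathbf 1_\Omega\in BV$. Moreover $\mathcal H^{d-1}(\partial\Omega\setminus\partial^*\Omega)=0$, because at every point where the local Lipschitz graph is differentiable the measure-theoretic normal exists. So $\vol(\partial^*\Omega)=\vol(\partial\Omega)$, which gives the second formula.
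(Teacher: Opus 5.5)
Your route is the paper's. You apply Theorem~\ref{thm:crossing-volume-kuznecov} at $s=d-1$, identify $\mathcal V^{d-1}(\partial\Omega)=\vol(\partial^*\Omega)$ by a D\'avila-type $BV$ difference-quotient limit localized in charts (this is Proposition~\ref{prop:codimension-one-crossing-volume}), and use $\mathcal H^{d-1}(\partial\Omega\setminus\partial^*\Omega)=0$ for Lipschitz domains. Your half-space constant $\frac{\vol(B^{d-1})}{d+1}$ is exactly the paper's $\frac12\int_{B^d}|z_1|\,dz$.

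\textbf{The error: the constant.} At $s=d-1$ one has $2^{d-s-1}=1$, $1-\frac{d-s}{2}=\frac12$, and $1+d-\frac s2=\frac{d+3}{2}$, not $\frac{d+2}{2}$. Hence
\[
C_{d,d-1}=\frac{\Gamma\bigl(\frac{d+3}{2}\bigr)}{\sqrt\pi\,\Gamma\bigl(\frac{d+3}{2}\bigr)\,\Gamma\bigl(\frac12\bigr)}=\frac1\pi
\]
exactly, and the coefficient is simply $C_{d,d-1}\mathcal V^{d-1}(\partial\Omega)=\frac1\pi\vol(\partial^*\Omega)$.

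The mechanism you proposed for absorbing the spurious factor $\Gamma(\frac d2+1)/\Gamma(\frac{d+3}{2})$ does not exist. The normalization $\frac{d+1}{\vol(B^{d-1})}$ is already built into $\mathcal V^{d-1}$, and your own half-space computation shows it makes $\mathcal V^{d-1}$ equal to the area exactly, so nothing is left for it to cancel. Had your value of $C_{d,d-1}$ been right, the corollary as stated would be false. Your suggested one-dimensional check would have caught the slip: for an interval in $\mathbb R/L\mathbb Z$ the tail is $\frac{2}{\pi\lambda}+o(\lambda^{-1})$.

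\textbf{Positivity.} Connectedness of $M$ does not give $\vol(\Omega)<\vol(M)$; take $\Omega=M\setminus\{p\}$. What connectedness gives is $|D\mathbf 1_\Omega|(M)>0$ once $0<\vol(\Omega)<\vol(M)$. The case $\vol(M\setminus\Omega)=0$ must be treated separately, since Theorem~\ref{thm:crossing-volume-kuznecov} requires $C_\Omega>0$. It is trivial: $a_j=\int_M e_j\,dV_g=0$ for $j\geq1$, so $N_\Omega(\lambda)\equiv\vol(\Omega)$, and $|D\mathbf 1_\Omega|(M)=0$.

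\textbf{Localization.} Dominated convergence should be applied in the rescaled variable $w=z/r$, with dominating function $|w|\,|D\mathbf 1_\Omega|(M)$, not in $x$. Also, with finitely many fixed charts the error is not $O(r)\,r^{d+1}$. In coordinates $B(x,r)$ is close to the ellipsoid $\{v:g_{ij}(x)v^iv^j<r^2\}$, and this ellipsoid varies with $x$ across the chart. You must either freeze the metric on charts of diameter $\delta$ and let $\delta\to0$ after $r\to0$, or use the localized (weak-$*$) form of the $BV$ limit. The paper states this localization in one line, so this is a matter of detail rather than a gap.
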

In the Euclidean setting, the corresponding finite-perimeter Fourier-tail asymptotic was obtained by Beretti and Gennaioli \cite{BerettiGennaioli2024}. Corollary \ref{cor:codimension-one-kuznecov} provides the corresponding Laplace-spectral analog on compact Riemannian manifolds.
The geometric identity underlying
Corollary \ref{cor:codimension-one-kuznecov}, namely the identification of
$\mathcal V^{d-1}(\partial\Omega)$ with the codimension-one volume of the
reduced boundary, is proved in
Proposition \ref{prop:codimension-one-crossing-volume}. 
This codimension-one formula makes the analogy with the second term in
Weyl's law especially transparent.}

\begin{remark}[Weyl asymptotics and the Berry conjecture]
\label{rem:Weyl-conj}
For a smooth bounded domain $\Omega\subset\mathbb R^d$, let
\[
N^{D/N}(\lambda):=\#\{j:\lambda_j^{D/N}\leq\lambda\}
\]
denote the counting function for the Dirichlet or Neumann Laplacian.
Weyl's two-term conjecture predicts
\[
\begin{aligned}
N^{D/N}(\lambda)
&=
(2\pi)^{-d}\vol(B^d)\vol(\Omega)\lambda^d \\
&\quad
\mp \frac14(2\pi)^{-(d-1)}
\vol(B^{d-1})\vol(\partial\Omega)\lambda^{d-1}
+o(\lambda^{d-1}),
\end{aligned}
\]
where the minus and plus signs correspond to the Dirichlet and Neumann
boundary conditions. Thus the first correction to the leading volume term
detects codimension-one boundary volume. Ivrii \cite{Ivrii1980} proved
this conjecture for sufficiently smooth domains under the assumption that
the set of periodic billiard trajectories has measure zero. More recently, Frank and
Larson \cite{FrankLarson2025} established two-term asymptotics for Riesz
means on bounded Lipschitz domains.

For fractal boundaries, the analogous modification of the second Weyl term
is related to Berry's conjecture. Levitin and
Vassiliev \cite{LevitinVassiliev1996} studied self-similar fractal domains
by renewal-theoretic methods and showed that the boundary dimension
determines the scale of the correction, while its finer behavior depends
on the arithmetic structure of the defining similarities. In particular,
constant and log-periodic next-order terms can arise. Their spectral
problem is different from ours: they study the Laplacian on the fractal
domain itself, whereas here the eigenfunctions belong to the ambient
manifold.

Theorems \ref{thm:kuznecov-two-sided}
and \ref{thm:crossing-volume-kuznecov} exhibit the corresponding phenomenon
for Kuznecov sums. For an $s$-dimensional boundary, the natural scale is
$\lambda^{s-d}$, and an exact term at this scale is equivalent to the
existence of the crossing volume $\mathcal V^s(\partial\Omega)$. At
$s=d-1$ this reduces to the usual boundary volume, while for $s>d-1$ it
provides a higher-dimensional boundary invariant. For the self-similar
boundaries considered below, the non-lattice case yields a genuine second
term, whereas the lattice case exhibits log-periodic behavior.
\end{remark}

\begin{remark}[Relation to heat-content asymptotics]
\label{rem:heat-content}
The heat-kernel quantity underlying
Theorem \ref{thm:crossing-volume-kuznecov} is closely related to the
classical notion of heat loss. In the Euclidean setting, van den Berg and
Gittins \cite{vanDenBergGittins2015} considered the free heat evolution
with initial temperature $\mathbf 1_\Omega$ and studied the heat loss
\[
F_\Omega(t)
=
\vol(\Omega)-\int_\Omega e^{t\Delta}\mathbf 1_\Omega\,dx
=
\int_\Omega\int_{\mathbb R^d\setminus\Omega}
p_t(x,y)\,dy\,dx.
\]
They obtained two-sided estimates for $F_\Omega(t)$ in terms of quantities
of the form
\[
\int_\Omega
\vol\bigl(B(x,c\sqrt t)\setminus\Omega\bigr)\,dx
=
\vol^{2d}\bigl(E_\Omega(c\sqrt t)\bigr),
\]
which measure the volume of pairs crossing the boundary at scale
$\sqrt t$.

Heat asymptotics for domains with fractal boundary have also been studied
in several related settings. Van den Berg \cite{vanDenBerg1994} investigated
heat content and Brownian motion for regions with fractal boundary, while
van den Berg and den Hollander \cite{vanDenBergDenHollander1999} obtained
log-periodic terms for planar domains with fractal polygonal boundary.
More recently, Park and Xiao \cite{ParkXiao2023} exhibited an
arithmetic/non-arithmetic dichotomy for spectral heat content on
self-similar fractal domains using renewal theory.
Rozanova-Pierrat, Teplyaev, Winter, and
Z\"ahle \cite{RozanovaPierratTeplyaevWinterZaehle2025} studied short-time
heat content for domains with smooth or fractal boundary and related its
asymptotic behavior to Minkowski sausages and fractal-curvature quantities.

The heat-kernel quantity arising in the present Kuznecov problem is closely
related to these heat-content problems, but the geometric quantity used here
is different. Whereas Minkowski-sausage asymptotics measure the volume of a
one-sided boundary layer, the crossing volume is determined by the two-point
crossing quantity $\vol^{2d}(E_\Omega(r))$, which measures pairs lying on
opposite sides of the boundary at scale $r$. This crossing-pair geometry is
linked directly to the ambient heat loss through the heat kernel and, by a
Tauberian argument, to the unsmoothed Kuznecov sum. In particular,
Theorem \ref{thm:crossing-volume-kuznecov} shows that the existence of an
exact Kuznecov second term is equivalent to the existence of the
corresponding crossing volume.
\end{remark}

It remains to identify geometric classes for which the crossing-volume
hypothesis of Theorem \ref{thm:crossing-volume-kuznecov} holds. We now
describe a class of self-similar fractal boundaries for which this can be
done.

\subsection{Admissible self-similar boundaries}

We call a boundary \emph{admissible self-similar} if it is assembled from
finitely many self-similar pieces in $\mathbb R^d$ satisfying the open set
condition and suitable compatibility conditions.
These conditions ensure that the two local sides fit together and that
interactions between distinct pieces are lower-dimensional. This class
includes familiar examples such as the Koch snowflake and the
$(n,c)$-von Koch snowflakes studied by Keleti and
Paquette \cite[Section 1 and Theorem 3.1]{KeletiPaquette2010}.

The lattice type is inherited from the generating iterated function
systems (IFSs). An admissible self-similar boundary is called lattice or
non-lattice according to the arithmetic type of its local generating IFSs.
The precise definitions are given in
Section \ref{sec:self-similar-crossing-volumes}. The self-similar geometry
determines whether the normalized crossing volume converges or exhibits
log-periodic behavior. For the spectral statements below, after a translation
and dilation we regard these Euclidean domains as domains in the flat torus
$\mathbb T^d$, without changing their contraction ratios or lattice type.

\begin{theorem}[Kuznecov asymptotic in the non-lattice case]
\label{thm:admissible-self-similar-kuznecov}
Let $\Omega\subset\mathbb T^d$ be a domain whose boundary is a
non-lattice admissible self-similar boundary of dimension $s\in(d-1,d)$.
Then
\[
0<\mathcal V^s(\partial\Omega)<\infty,
\]
and \begin{equation*}
N_\Omega(\lambda)
=
\vol(\Omega)
-
C_{d,s}\,\mathcal V^s(\partial\Omega)\,\lambda^{s-d}
+
o\bigl(\lambda^{s-d}\bigr),
\qquad \lambda\to\infty.
\end{equation*}
\end{theorem}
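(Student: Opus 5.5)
By Theorem \ref{thm:crossing-volume-kuznecov}, the spectral conclusion follows at once from (i)$\Rightarrow$(ii) with $C_\Omega=C_{d,s}\mathcal V^s(\partial\Omega)$. So the whole task is geometric: show that for a non-lattice admissible self-similar boundary the limit
\[
\lim_{r\to0^+} r^{s-2d}\vol^{2d}\bigl(E_\Omega(r)\bigr)
\]
exists and is positive and finite. We work in $\mathbb R^d$ before the translation and dilation into $\mathbb T^d$. For small $r$ the Riemannian distance on the flat torus is Euclidean distance, and the domain sits in a fundamental cell, so the passage to $\mathbb T^d$ changes nothing.

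\emph{Finiteness, positivity and oscillation.} An admissible self-similar boundary with the open set condition is $s$-Ahlfors regular. The compatibility conditions should give the two-sided corkscrew condition (as for the Koch snowflake). Then the crossing-volume estimate of Section \ref{sec:crossing-volume-estimate} gives $c r^{2d-s}\le \vol^{2d}(E_\Omega(r))\le C r^{2d-s}$. So $f(r):=r^{s-2d}\vol^{2d}(E_\Omega(r))$ is bounded above and below, and any limit is automatically in $(0,\infty)$.

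\emph{Renewal equation.} Localize $E_\Omega(r)$ to pairs near each self-similar piece $K=\bigcup_i S_i K$ with ratios $r_i$, $\sum r_i^s=1$. If $(x,y)\in E_\Omega(r)$ with $x$ near $S_iK$, then $S_i^{-1}$ maps the pair to a crossing pair for the model local configuration at scale $r/r_i$. The two local sides are preserved by $S_i$ because of the compatibility conditions. This gives
\[
\Phi(r)=\sum_i r_i^{2d}\,\Phi(r/r_i)+\eta(r),
\]
where $\Phi$ is the localized crossing volume. The error $\eta(r)$ collects pairs near intersections $S_iK\cap S_jK$, near junctions between distinct pieces, and the discrepancy at the top scale. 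The hypothesis that interactions between distinct pieces are lower-dimensional should yield $|\eta(r)|\le C r^{2d-s+\epsilon}$: such pairs live in an $r$-neighbourhood of a set of dimension $<s$, contributing $r^{d}\cdot r^{d-s'}$ with $s'<s$.

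\emph{Renewal theorem.} Set $t=-\log r$ and $Z(t)=e^{(2d-s)t}\Phi(e^{-t})$. This yields $Z(t)=\sum_i r_i^{s} Z(t+\log r_i)+z(t)$, a renewal equation with probability weights $r_i^s$. The perturbation $z$ decays exponentially, hence is directly Riemann integrable, with a local regularity check from monotonicity of $\Phi$ in $r$. In the non-lattice case, where $\{\log r_i\}$ does not generate a discrete subgroup, Feller's renewal theorem gives
\[
Z(t)\to \frac{\int z}{\sum_i r_i^s\log(1/r_i)}.
\]
Summing over the finitely many pieces, the limit of $f$ exists, and it is nonzero by the lower bound.

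\emph{Main obstacle.} The hard step is the bookkeeping that justifies the renewal equation with an error $O(r^{2d-s+\epsilon})$. One has to check that crossing pairs near a piece $S_iK$ are exactly the images of crossing pairs for the parent configuration, with $\Omega$ and its complement locally matching on both sides. One also has to handle pairs with $x$ and $y$ near different subpieces. I would first try to control everything by the $r$-neighbourhood of $\bigcup_{i\ne j}(S_iK\cap S_jK)$ together with the piece-junction sets, using the admissibility assumption to bound their upper box dimension by some $s'<s$.
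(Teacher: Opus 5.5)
Your outline has the same architecture as the paper's proof: reduce to the crossing volume via Theorem~\ref{thm:crossing-volume-kuznecov}, derive a renewal equation for a localized two-body volume, apply the renewal theorem, and sum over pieces. The gap is in the step you rightly flag as the main obstacle, and the way you propose to handle it does not work.

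Localizing to ``pairs near $S_iK$'' gives no exact self-similarity. The hypotheses control the two sides only inside the first-level cells. The open set condition gives $F\cap S_iO=S_i(F\cap O)$, hence $S_iO\setminus F=S_i\Omega^+\sqcup S_i\Omega^-$, and condition~(iii) of Definition~\ref{def:admissible-local-self-similar-boundary} matches these sets with $\Omega^\pm$. A point near $S_iK$ but outside $S_iO$ lies in another cell or in $O\setminus\bigcup_j\overline{S_jO}$. There, nothing ties its side to the $S_i$-image of a model side, and neighbourhoods of different $S_iK$ also overlap.

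The missing idea is to localize to the feasible open set itself, with $f(r)=\vol^{2d}(E_O^{\Omega^+}(r))$. Then $S_i\times S_i$ carries $E_O^{\Omega^+}(r/r_i)$ or $E_O^{\Omega^-}(r/r_i)$ (according as $S_i$ preserves or swaps sides) exactly onto the pairs of $E_O^{\Omega^+}(r)$ lying in $S_iO\times S_iO$. Both source sets have volume $f(r/r_i)$, so the main term is exactly $\sum_i r_i^{2d}f(r/r_i)$.

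Your proposed error set, the $r$-neighbourhood of $\bigcup_{i\ne j}(S_iK\cap S_jK)$ plus junctions, does not contain the remaining pairs. Take the Koch curve with its standard triangular feasible set. $S_1K$ passes through the apex of $S_1O$, which lies on no other $S_jK$. Just outside that apex is a component of $O\setminus\bigcup_j\overline{S_jO}$ lying above $K$. Just inside is the $S_1$-image of the central bump, lying below $K$. So crossing pairs straddle $\partial(S_1O)$ there, and at the other points where $S_1K$ touches that edge.

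What is true is the following. A pair not contained in a single cell either has both coordinates in $G=O\setminus\bigcup_i\overline{S_iO}$ but in different components of $G$, which forces $x$ into the $r$-neighbourhood of $\partial G$. Or it has one coordinate in some $S_iO$ and the other outside it, which places it (up to transposition and a null set) in $E^{S_iO}_{\mathbb T^d}(r)$. Since $\partial G\subset\partial O\cup\bigcup_iS_i(\partial O)$, Remark~\ref{rem:minkowski-upper-bound} bounds both contributions by $O(r^{2d-q})$ for any $q\in(\overline{\dim}_M(\partial O),s)$. This is exactly where condition~(i) of the admissibility definition enters, and your sketch never uses it. The gluing over $F_1,\dots,F_m$ is handled the same way, with $\partial O_i$ and $\partial(\Omega\setminus\bigcup_i\overline{O_i})$ in place of the junctions $F_i\cap F_j$.

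Your positivity argument via Proposition~\ref{prop:crossing-volume} is sound once the corkscrew condition is verified. That follows by iterating (iii) on a cylinder $S_wO_i$ of diameter comparable to $r$ with $x\in S_w\overline{O_i}$. The exact identity makes this unnecessary, though: the forcing term is nonnegative and equals $f(r)>0$ for $\max_i r_i<r<1$, so the renewal constant is positive directly.
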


\begin{remark}
The self-avoiding $(n,c)$-von Koch snowflakes studied by Keleti and
Paquette \cite[Section 1 and Theorem 3.1]{KeletiPaquette2010} form a
family of admissible self-similar boundaries in the sense of this
paper. In particular, the $(3,\tfrac14)$-von Koch snowflake is
non-lattice. See Remark \ref{rem:nc-snowflake-admissibility} for a
detailed verification.
\end{remark}

In the lattice case, self-similar scaling generally produces log-periodic
oscillations rather than a constant crossing-volume limit. The classical Koch snowflake is a basic lattice example.
Accordingly, one cannot in general expect the constant second-order
asymptotic of Theorem \ref{thm:crossing-volume-kuznecov}. Instead, we obtain
the following log-periodic bounds.

\begin{theorem}[Kuznecov bounds in the lattice case]
\label{thm:lattice-kuznecov}
Let $\Omega\subset\mathbb T^d$ be a domain whose boundary is a lattice
admissible self-similar boundary of dimension $s\in(d-1,d)$. Suppose that
the maximal lattice spans of its local pieces are commensurable. Then there
exist $q>1$, a positive, right-continuous, multiplicatively $q$-periodic
function $P_\Omega:(0,\infty)\to(0,\infty)$, bounded above and below by
positive constants, and a function
$\varepsilon:(0,\infty)\to(0,1)$ with
$\varepsilon(\lambda)\to0$ as $\lambda\to\infty$, such that, as
$\lambda\to\infty$,
\begin{align}
\label{eq:lattice-kuznecov-estimate}
\vol(\Omega)
&-\lambda^{s-d}
P_\Omega\bigl((1-\varepsilon(\lambda))\lambda\bigr)
+o(\lambda^{s-d})
\notag\\
&\leq N_\Omega(\lambda)
\leq
\vol(\Omega)
-\lambda^{s-d}
P_\Omega\bigl((1+\varepsilon(\lambda))\lambda\bigr)
+o(\lambda^{s-d}).
\end{align}
The two little-oh terms may be different.
\end{theorem}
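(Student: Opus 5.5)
We prove Theorem \ref{thm:lattice-kuznecov} in three steps. First we obtain a log-periodic asymptotic for the crossing volume. Next we pass to the heat-kernel quantity. Finally a Tauberian argument with multiplicatively periodic comparison functions transfers this to the sharp sum $N_\Omega(\lambda)$.

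\emph{Step 1 (geometry).} Write $\partial\Omega$ as the union of its finitely many local self-similar pieces $K_1,\dots,K_m$. Using the admissibility conditions, decompose
\[
\vol^{2d}(E_\Omega(r))=\sum_i \Phi_i(r)+R(r).
\]
Here $\Phi_i(r)$ counts crossing pairs near $K_i$, and $R(r)=o(r^{2d-s})$ collects interactions between distinct pieces, which are lower-dimensional. Self-similarity of $K_i$ under its IFS with ratios $\rho_{ik}$ gives a renewal equation
\[
\Phi_i(r)=\sum_k \rho_{ik}^{2d}\,\Phi_i(r/\rho_{ik})+\text{error}_i(r).
\]
The open set condition, together with Ahlfors regularity and the corkscrew bounds of Section \ref{sec:crossing-volume-estimate}, makes the error term directly Riemann integrable after the substitution $r=e^{-t}$ and the normalization $e^{(2d-s)t}$. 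The lattice renewal theorem then gives
\[
r^{s-2d}\Phi_i(r)=p_i(\log r)+o(1),
\]
with $p_i$ periodic of period equal to the maximal span $h_i$. Since the spans are commensurable, all the $p_i$ share a common period $\log q$, and $p(\log r):=\sum_i p_i(\log r)$ is $\log q$-periodic and bounded between positive constants by the two-sided estimate. The renewal limit is a countable sum of nonnegative monotone-in-$r$ contributions, so $p$ can be chosen right-continuous.

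\emph{Step 2 (heat loss).} As in the proof of Theorem \ref{thm:crossing-volume-kuznecov}, write
\[
\vol(\Omega)-\int_\Omega e^{t\Delta}\mathbf 1_\Omega=\int_0^\infty \bigl(-\partial_r k_t(r)\bigr)\vol^{2d}(E_\Omega(r))\,dr+\text{(curvature errors)},
\]
using the small-time heat kernel on $\mathbb T^d$, which is Euclidean up to exponentially small periodization terms. Inserting Step 1 gives
\[
t^{(d-s)/2}\bigl(\vol(\Omega)-\langle e^{t\Delta}\mathbf 1_\Omega,\mathbf 1_\Omega\rangle\bigr)=Q(t)+o(1),
\]
where $Q$ is a log-periodic function obtained by Gaussian-type averaging of $p$. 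Equivalently, the Laplace–Stieltjes transform of the tail measure $d(\vol(\Omega)-N_\Omega)$ in the variable $\lambda^2$ has a log-periodic leading term.

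\emph{Step 3 (Tauberian inversion; the main obstacle).} Plain Karamata theory fails here because the leading term is not regularly varying. We instead identify a profile $P_\Omega$ directly. We define $P_\Omega$ by requiring that the monotone function $\lambda^{s-d}P_\Omega(\lambda)$ reproduce $Q$ exactly under the heat transform. Existence is obtained by Mellin analysis: the Fourier coefficients of $p$ at frequencies $2\pi k/\log q$ are divided by Gamma factors that never vanish on the relevant vertical line. Positivity and right-continuity of $P_\Omega$ are inherited from Step 1 by taking the limit profile along the sequence $\lambda q^n$ of the monotone tail itself. Concretely, consider the rescaled tails
\[
T_n(\mu)=(q^n\lambda_0)^{d-s}\bigl(\vol(\Omega)-N_\Omega(q^n\lambda_0\mu)\bigr).
\]
These are uniformly bounded monotone functions by Theorem \ref{thm:kuznecov-two-sided}. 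By Helly selection any subsequence has a limit, and every limit has the same heat transform. Injectivity of the Laplace transform then forces a unique limit, and this limit is, up to the factor $\mu^{s-d}$, the profile $P_\Omega$. Convergence of monotone functions holds only at continuity points and locally uniformly away from jumps. Converting this into uniform bounds produces exactly the slack $P_\Omega((1\pm\varepsilon(\lambda))\lambda)$ with some $\varepsilon(\lambda)\to0$: one takes $\varepsilon(\lambda)$ to be the modulus needed so that monotonicity sandwiches $N_\Omega(\lambda)$ between the limit evaluated at the shifted points. This yields \eqref{eq:lattice-kuznecov-estimate}, with different little-oh terms on the two sides.
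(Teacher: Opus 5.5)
Your plan follows the paper's architecture. Local renewal equations plus gluing give $\vol^{2d}(E_\Omega(r))=r^{2d-s}\Pi_\Omega(r)+o(r^{2d-s})$. The heat parametrix turns this into $H_\Omega(t)=t^{\beta}Q_\Omega(t)+o(t^\beta)$. A log-periodic Tauberian step gives orbitwise convergence of $q^{m(d-s)}R_\Omega(q^m z)$, and a monotone sandwich produces the displaced arguments. The genuine gap is the key estimate of Step 1, which you justify with tools that cannot deliver it.

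After $r=e^{-t}$ the forcing term becomes $z(t)=e^{(2d-s)t}\varphi_{2d}(e^{-t})$. The lattice renewal theorem needs $\varphi_{2d}(r)=O(r^{2d-s+\gamma})$ for some $\gamma>0$, or at least direct Riemann integrability of $z$. So does the mere convergence of the periodization $\frac{h}{\eta}\sum_{\ell}z(t-\ell h)$ that defines $p_i$. The tools you cite fall short:
\begin{itemize}
\item Ahlfors regularity only gives $\varphi_{2d}(r)\le\vol^{2d}(E(r))\le Cr^{2d-s}$, i.e.\ $z=O(1)$, the same order as the main term.
\item The corkscrew condition only gives lower bounds.
\item The open set condition alone says nothing about the size of $\partial O$.
\end{itemize}
The missing ingredient is condition (i) of Definition~\ref{def:admissible-local-self-similar-boundary}, $\overline{\dim}_M(\partial O)<s$, used as in Proposition~\ref{thm:local-renewal}. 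A crossing pair not contained in a single first-level cell falls into one of two cases. Either one coordinate lies in some $S_iO$ and the other outside it, so the pair lies in $E^{S_iO}_{\mathbb T^d}(r)$. Or both coordinates lie in the gap $G=O\setminus\bigcup_i\overline{S_iO}$, each of whose components lies on one side, so $x$ is within $r$ of $\partial G\subset\partial O\cup\bigcup_i S_i(\partial O)$. Remark~\ref{rem:minkowski-upper-bound} then gives $O(r^{2d-q})$ for any $q\in(\overline{\dim}_M\partial O,s)$. The same mechanism, applied to the sets $\partial O_i$, is what makes your interaction term $R(r)$ of lower order. It is not enough that $F_i\cap F_j$ is small.

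Your Step 3 is a sound alternative to the paper's appeal to Kevei's theorem (Theorem~\ref{thm:log-periodic-tauberian}). Set $S_n=(q^n\lambda_0)^2$. By $q^2$-periodicity of $Q_\Omega$, $S_n^\beta H_\Omega(\tau/S_n)\to\tau^\beta Q_\Omega(\tau/\lambda_0^2)$. A bound $R_\Omega(\lambda)\le C\lambda^{s-d}$ for all $\lambda>0$ dominates the rescaled tails, since $\beta<1$. Uniqueness of Laplace transforms then identifies all Helly limits, so you prove the needed direction of Kevei's result directly. Your final sandwich is Lemma~\ref{lem:shifted-monotone-convergence}.

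Three corrections to Step 3. First, Theorem~\ref{thm:kuznecov-two-sided} assumes Ahlfors regularity and the corkscrew condition, which are not hypotheses here. Both can be checked by self-similarity, or you can get the bound from a positive bounded profile via Proposition~\ref{prop:gaussian-to-smoothed-tail} and Corollary~\ref{cor:tauberian-comparison}; positivity can be read off the renewal structure, as the paper does. Either way, this must be said. Second, drop the Mellin paragraph. $|\Gamma(1-\beta+i\tau)|$ decays like $e^{-\pi|\tau|/2}$, so its non-vanishing neither makes the divided Fourier series converge nor gives a positive profile. Existence of $P_\Omega$ comes from monotonicity via Helly, and right-continuity from taking the right-continuous version of the monotone limit. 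Your argument for right-continuity of $p$ fails anyway, since the summands $z(t-\ell h)$ are not monotone, but it is not needed. Third, Step 2 should read $t^{-(d-s)/2}H_\Omega(t)=Q(t)+o(1)$.
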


Theorem \ref{thm:lattice-kuznecov} shows that the lattice case retains the
same natural scale $\lambda^{s-d}$, but the coefficient is governed by a multiplicatively periodic
profile. Thus the arithmetic distinction between lattice and non-lattice
self-similarity is reflected spectrally: the non-lattice case yields a
genuine two-term asymptotic, while the lattice case exhibits log-periodic
oscillation. Because the periodic profile $P_\Omega$ need not be continuous,
the spectral conclusion is naturally expressed by the two-sided bounds
above rather than by a single log-periodic asymptotic formula.

\subsection{Organization of the paper}

The remainder of the paper is organized as follows. In
Section \ref{sec:crossing-volume-estimate}, we
establish the two-sided estimates for $\vol^{2d}(E_\Omega(r))$ underlying
Theorem \ref{thm:kuznecov-two-sided}.
In Section \ref{sec:heat-tauberian}, we combine the classical small-time
asymptotics of the heat kernel with a Tauberian argument to prove
Theorem \ref{thm:crossing-volume-kuznecov}.
Section \ref{sec:self-similar-crossing-volumes} introduces the precise
self-similar framework and establishes the corresponding crossing-volume
asymptotics. In particular, it proves
Theorem \ref{thm:admissible-self-similar-kuznecov}. Finally,
Section \ref{sec:lattice-case} derives the spectral consequences in the
lattice case and proves
Theorem \ref{thm:lattice-kuznecov}.

\subsection*{Declaration on the use of generative AI}
ChatGPT (OpenAI, GPT-5.5/5.6) was used for language polishing and 
proofreading of the
manuscript. The authors take full responsibility for the mathematical
content and the final text.

\subsection*{Acknowledgment}  This project is supported by the National Key Research and Development Program of China No. 2022YFA1007200,  Zhejiang Provincial Natural Science Foundation of China under
Grant No. LR25A010001, and NSFC Grant No.
12571107. 

\section{Geometric estimates for the Kuznecov remainder}
\label{sec:crossing-volume-estimate}

In this section, we establish the geometric estimate underlying
Theorem \ref{thm:kuznecov-two-sided}. Under the hypotheses of that theorem,
we prove the two-sided bound
\[
  c r^{2d-s}
  \leq \vol^{2d}\bigl(E_\Omega(r)\bigr)
  \leq C r^{2d-s}
\]
for all sufficiently small $r>0$. Combined with the small-time asymptotics
of the heat kernel and a Tauberian theorem, this estimate yields
Theorem \ref{thm:kuznecov-two-sided}. Since this analytic step is essentially
the same as the one used to prove
Theorem \ref{thm:crossing-volume-kuznecov}, we treat the two arguments
together in Section \ref{sec:heat-tauberian}. We therefore focus here on the
following geometric estimate.

\begin{proposition}
\label{prop:crossing-volume}
Let $(M,g)$ be a compact $d$-dimensional Riemannian manifold without
boundary, and let $\Omega\subset M$ be a domain. Suppose that
$\partial\Omega$ is $s$-Ahlfors regular for some $s\in[d-1,d)$ and that
$\Omega$ satisfies the two-sided corkscrew condition. Then there exist
constants $c,C>0$ and $r_0>0$ such that
\[
  c r^{2d-s}
  \leq \vol^{2d}\bigl(E_\Omega(r)\bigr)
  \leq C r^{2d-s},
  \qquad 0<r\leq r_0.
\]
\end{proposition}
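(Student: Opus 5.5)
We prove both bounds by comparing $E_\Omega(r)$ with the $r$-neighborhood of $\partial\Omega$ and a family of corkscrew balls. Two standard facts about Ahlfors regularity on a compact manifold will be used throughout. First, for $r\le r_0$ small, $\partial\Omega$ admits a maximal $r$-separated set $\{x_i\}_{i=1}^{N(r)}$ with $N(r)\approx r^{-s}$. The upper bound $N(r)\lesssim r^{-s}$ follows from disjointness of the balls $B(x_i,r/2)$ together with the lower Ahlfors bound and $\mathcal H^s(\partial\Omega)<\infty$, the latter by compactness and the upper bound. The lower bound $N(r)\gtrsim r^{-s}$ follows from covering $\partial\Omega$ by the balls $B(x_i,r)$ and using the upper Ahlfors bound. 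Second, the tubular neighborhood $T_r:=\{x:d_g(x,\partial\Omega)<r\}$ satisfies $\vol(T_r)\lesssim N(r)\,r^d\lesssim r^{d-s}$, since $T_r\subset\bigcup_i B(x_i,2r)$ and small geodesic balls have volume comparable to $r^d$.

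\emph{Upper bound.} Take $(x,y)\in E_\Omega(r)$, so that $x\in\Omega$, $y\notin\Omega$, and $d_g(x,y)<r$. Along a minimizing geodesic from $x$ to $y$, connectedness gives a point of $\partial\Omega$, so $d_g(x,\partial\Omega)<r$ and hence $x\in T_r$. Since also $y\in B(x,r)$,
\[
\vol^{2d}\bigl(E_\Omega(r)\bigr)\le \int_{T_r}\vol\bigl(B(x,r)\bigr)\,dV_g(x)\lesssim r^{d}\,r^{d-s}=r^{2d-s}.
\]
No corkscrew hypothesis is needed for this direction; only the upper Ahlfors bound enters, together with the lower bound used in the packing count.

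\emph{Lower bound.} Fix the separated points $x_i$ at scale $r/4$, and apply the two-sided corkscrew condition at $x_i$ with radius $r/4$. This gives balls
\[
B_i^{\mathrm{in}}=B(y_1^i,c_0r/4)\subset\Omega\cap B(x_i,r/4),\qquad B_i^{\mathrm{out}}=B(y_2^i,c_0r/4)\subset(M\setminus\overline\Omega)\cap B(x_i,r/4).
\]
Any pair in $B_i^{\mathrm{in}}\times B_i^{\mathrm{out}}$ has distance less than $r/2<r$, so it lies in $E_\Omega(r)$. Each product has $2d$-volume $\gtrsim r^{2d}$.

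To add these contributions, we need the sets $B_i^{\mathrm{in}}\times B_i^{\mathrm{out}}$ to have bounded overlap. If $B_i^{\mathrm{in}}$ meets $B_j^{\mathrm{in}}$, then $d_g(x_i,x_j)<r/2$. Since the $x_j$ are $r/4$-separated, a volume-doubling count shows that only boundedly many $j$ satisfy this. Alternatively, one can pass to a $r$-separated subfamily, which retains $\gtrsim r^{-s}$ points and makes the $B(x_i,r/4)$ disjoint. Either way we obtain
\[
\vol^{2d}\bigl(E_\Omega(r)\bigr)\gtrsim N(r)\,r^{2d}\gtrsim r^{2d-s}.
\]

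The step requiring the most care is the packing count $N(r)\gtrsim r^{-s}$ used in the lower bound. It depends on the upper Ahlfors bound together with $\mathcal H^s(\partial\Omega)>0$, and the latter follows from the lower Ahlfors bound at any single boundary point. All other constants are uniform on $M$ because it is compact, with $r_0$ taken below both the injectivity radius and the Ahlfors/corkscrew scales.
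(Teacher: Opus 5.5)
Your proposal is correct and follows essentially the same route as the paper: the upper bound comes from the geodesic-crossing observation $x\in F_r$, Fubini, and the packing estimate $\vol(F_r)\lesssim r^{d-s}$, and the lower bound comes from corkscrew product balls at separated boundary points, counted using the upper Ahlfors bound. The only difference is cosmetic: the paper takes a maximal $r/2$-separated set (i.e.\ $2\rho$ with $\rho=r/4$), so the balls $B(z_j,r/4)$ are already disjoint; this is essentially your second alternative and avoids the bounded-overlap count.
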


The proof uses the following standard uniform volume estimate for small
geodesic balls, which follows from the compactness of $M$.

\begin{lemma}[Uniform volume bounds for small geodesic balls]
\label{lem:uniform-ball-volume}
There exist constants $c_M,C_M>0$ and $r_M>0$ such that
\[
  c_M r^{d}
  \leq \vol\bigl(B(x,r)\bigr)
  \leq C_M r^{d},
  \qquad x\in M,\quad 0<r\leq r_M.
\]
\end{lemma}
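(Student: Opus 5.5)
The statement to prove is Lemma \ref{lem:uniform-ball-volume}: uniform two-sided volume bounds for small geodesic balls on a compact manifold. The plan is to reduce to a uniform comparison of the metric with the Euclidean one in normal coordinates, and to use compactness to make all constants independent of $x$.

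First, since $M$ is compact, its injectivity radius $\operatorname{inj}(M)$ is positive. Choose $r_1$ with $0<r_1<\operatorname{inj}(M)$. For every $x\in M$ and $0<r\leq r_1$, the exponential map $\exp_x$ is then a diffeomorphism from the Euclidean ball $\{v\in T_xM:|v|<r\}$ onto $B(x,r)$. This gives
\[
\vol\bigl(B(x,r)\bigr)=\int_{|v|<r}\theta_x(v)\,dv ,
\]
where $\theta_x(v)$ is the Jacobian density of $\exp_x$ with respect to the Euclidean measure on $T_xM$. In normal coordinates this is $\theta_x(v)=\sqrt{\det g_{ij}(v)}$, and $\theta_x(0)=1$.

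Second, I would show that $\theta_x(v)$ is bounded above and below by positive constants uniformly for $x\in M$ and $|v|\leq r_1$. The map $(x,v)\mapsto\theta_x(v)$ is continuous and positive on the closed disc bundle $\{(x,v)\in TM:|v|\leq r_1\}$. This set is compact because $M$ is compact and $r_1<\operatorname{inj}(M)$, which also ensures $\exp_x$ has no critical points there. Hence there are constants $0<a\leq b$ with $a\leq\theta_x(v)\leq b$ on this set.

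Third, integrating these bounds gives
\[
a\,\vol(B^d)\,r^d\leq\vol\bigl(B(x,r)\bigr)\leq b\,\vol(B^d)\,r^d .
\]
So the lemma holds with $r_M=r_1$, $c_M=a\,\vol(B^d)$ and $C_M=b\,\vol(B^d)$.

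The only delicate point is the uniformity in $x$, which rests on the positivity of the injectivity radius and the joint continuity of $\theta$; both are standard. Alternatively, one could cover $M$ by finitely many coordinate charts in which $g$ is uniformly comparable to the Euclidean metric. This makes geodesic balls comparable to Euclidean balls with uniform constants and yields the same conclusion.
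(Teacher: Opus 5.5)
Your argument is correct and is the standard compactness argument the paper relies on; the paper states this lemma without proof, remarking only that it follows from the compactness of $M$. One small point of attribution: compactness of the closed disc bundle $\{|v|\leq r_1\}$ needs only compactness of $M$, while the condition $r_1<\operatorname{inj}(M)$ is what guarantees that $\exp_x$ is a diffeomorphism onto $B(x,r)$ with nonvanishing Jacobian.
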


\begin{proof}[Proof of Proposition \ref{prop:crossing-volume}]
Set $F:=\partial\Omega$. By Ahlfors regularity, there exist constants
$c_A,C_A>0$ and $r_A>0$ such that
\begin{equation}
\label{eq:ahlfors-section2}
  c_A\rho^s
  \leq \mathcal H^s\bigl(F\cap B(z,\rho)\bigr)
  \leq C_A\rho^s,
  \qquad z\in F,\quad 0<\rho\leq r_A.
\end{equation}
Since $F$ is compact, a finite covering by balls of radius at most
$r_A$, together with \eqref{eq:ahlfors-section2}, shows that
$0<\mathcal H^s(F)<\infty$.
Let $c_0\in(0,1)$ and $r_{\mathrm{cs}}>0$ be the constants in the
two-sided corkscrew condition. We may take
$r_*:=\min\{r_A,r_{\mathrm{cs}},r_M/2\}$.
All estimates below are understood for $0<r\leq r_*$.

We first prove the upper bound. Write
$F_r:=\{x\in M:\distg(x,F)<r\}$.
If $x\in\Omega$, $y\in M\setminus\Omega$, and $d_g(x,y)<r$, then a
minimizing geodesic from $x$ to $y$ meets $F$. Consequently
$x\in F_r$, and Fubini's theorem together with
Lemma \ref{lem:uniform-ball-volume} gives
\begin{equation}
\label{eq:E-upper-first}
\begin{aligned}
  \vol^{2d}\bigl(E_\Omega(r)\bigr)
  &\leq
    \int_{\Omega\cap F_r}
      \vol\bigl(B(x,r)\bigr)\,dV_g(x) \\
  &\leq C_Mr^{d}\vol(F_r).
\end{aligned}
\end{equation}

To complete the upper bound, we estimate $\vol(F_r)$.

Let $\{z_j\}_{j=1}^{N}\subset F$ be a finite maximal $r$-separated set,
which in particular means that the balls $B(z_j,r/2)$ are pairwise
disjoint and that $F\subset\bigcup_{j=1}^{N}B(z_j,r)$.
Such a set exists because $F$ is compact.
The lower Ahlfors bound in \eqref{eq:ahlfors-section2} then yields
\[
  N c_A(r/2)^s
  \leq
  \sum_{j=1}^{N}
  \mathcal H^s\bigl(F\cap B(z_j,r/2)\bigr)
  \leq \mathcal H^s(F).
\]
Thus $N\leq C r^{-s}$. Applying the upper bound in
Lemma \ref{lem:uniform-ball-volume} to the covering of $F_r$, we obtain
\[
  \vol(F_r)
  \leq
  \sum_{j=1}^{N}\vol\bigl(B(z_j,2r)\bigr)
  \leq C N r^{d}
  \leq C r^{{d}-s}.
\]
Substitution into \eqref{eq:E-upper-first} gives
\[
  \vol^{2d}\bigl(E_\Omega(r)\bigr)
  \leq C r^{2{d}-s}.
\]

We next prove the lower bound. Put $\rho=r/4$, and let
$\{z_j\}_{j=1}^{N}\subset F$ be a maximal $2\rho$-separated set. By
maximality, $F\subset\bigcup_{j=1}^{N}B(z_j,2\rho)$.
The upper Ahlfors bound in \eqref{eq:ahlfors-section2} therefore gives
\[
  \mathcal H^s(F)
  \leq
  \sum_{j=1}^{N}
  \mathcal H^s\bigl(F\cap B(z_j,2\rho)\bigr)
  \leq N C_A(2\rho)^s,
\]
and hence $N\geq c\rho^{-s}$.

For each $j$, the two-sided corkscrew condition at $z_j$ and scale $\rho$
provides points $a_j\in\Omega$ and
$b_j\in M\setminus\overline\Omega$ such that
$B(a_j,c_0\rho)\subset\Omega\cap B(z_j,\rho)$ and
$B(b_j,c_0\rho)\subset
(M\setminus\overline\Omega)\cap B(z_j,\rho)$.
If $(x,y)\in B(a_j,c_0\rho)\times B(b_j,c_0\rho)$, then
$x,y\in B(z_j,\rho)$, and hence $d_g(x,y)<2\rho=r/2<r$.
Therefore
$B(a_j,c_0\rho)\times B(b_j,c_0\rho)\subset E_\Omega(r)$.
Since the points $z_j$ are $2\rho$-separated, the balls
$B(z_j,\rho)$ are pairwise disjoint. These product sets are
consequently pairwise disjoint as well. The lower bound in
Lemma \ref{lem:uniform-ball-volume} now implies
\begin{align*}
  \vol^{2d}\bigl(E_\Omega(r)\bigr)
  &\geq
  \sum_{j=1}^{N}
  \vol\bigl(B(a_j,c_0\rho)\bigr)
  \vol\bigl(B(b_j,c_0\rho)\bigr) \\
  &\geq N c_M^2 c_0^{2{d}}\rho^{2{d}}
  \geq c\rho^{2{d}-s}
  =c'r^{2{d}-s}.
\end{align*}
This proves the lower bound and completes the proof.
\end{proof}

\begin{remark}
\label{rem:minkowski-upper-bound}
The proof of the upper bound uses only the finiteness of the upper
$s$-dimensional Minkowski content of the boundary. More precisely, it is
enough that there exist constants $C,r_0>0$ such that
\[
\vol(F_r)\leq Cr^{{d}-s},
\qquad 0<r\leq r_0,
\]
where $F=\partial\Omega$ and $F_r=\{x\in M:\distg(x,F)<r\}$. In
particular, this estimate holds when $F$ is $s$-Ahlfors regular. Moreover,
if $\overline{\dim}_M F<s$, then
\[
  \vol(F_r)=o\bigl(r^{{d}-s}\bigr),
  \qquad r\to0^+.
\]
This observation will be important for the error estimates in
Section \ref{sec:self-similar-crossing-volumes}.
\end{remark}

\section{Crossing volumes and Kuznecov asymptotics}
\label{sec:heat-tauberian}

In this section, we complete the proofs of
Theorems \ref{thm:kuznecov-two-sided}
and \ref{thm:crossing-volume-kuznecov}. We first
derive the small-time behavior of a Gaussian average from 
$\vol^{2d}(E_\Omega(r))$. A local expansion of the heat kernel then transfers this
behavior to a smoothed Kuznecov tail. The desired spectral asymptotics
follow from a monotone Tauberian theorem.

Fix $s\in[d-1,d)$, and set
\[
a_j:=\int_\Omega e_j\,dV_g.
\]
Then Parseval's identity gives
\begin{equation}
\label{eq:counting-and-parseval}
  \lim_{\lambda\to\infty}N_\Omega(\lambda)
  =\sum_{j=0}^{\infty}|a_j|^2
  =\vol(\Omega).
\end{equation}

In this section, it will be convenient to set $\alpha:=2{d}-s$ and
$\beta:=(\alpha-{d})/2=({d}-s)/2$.
In particular, ${0<\beta\leq1/2}$.

\subsection{A Gaussian average}
\label{subsec:gaussian-average}

Recall that
{
$E_\Omega(r)=\{(x,y)\in\Omega\times(M\setminus\Omega):
d_g(x,y)<r\}$, and its volume is denoted by
$\vol^{2d}(E_\Omega(r))$.}
Let $R(x,y):=d_g(x,y)$, and define the finite Borel measure $\nu_\Omega$
on $[0,\operatorname{diam}(M)]$ by
\begin{equation}
\label{eq:distance-pushforward}
  \nu_\Omega
  :=
  R_*
  \bigl(
    (dV_g|_\Omega)\otimes(dV_g|_{M\setminus\Omega})
  \bigr).
\end{equation}

For every $x\in M$ and $r>0$, the geodesic sphere
$\{y\in M:d_g(x,y)=r\}$ has zero Riemannian volume. Indeed, it is
contained in the image under $\exp_x$ of the sphere of radius $r$ in
$T_xM$. The area formula shows that this smooth image has zero
${d}$-dimensional volume. Fubini's theorem therefore gives
$\nu_\Omega(\{r\})=0$ for every $r>0$. Since
$\nu_\Omega(\{0\})=0$ as well, we have
\begin{equation}
\label{eq:E-as-distribution}
  {\vol^{2d}\bigl(E_\Omega(r)\bigr)}
  =\nu_\Omega([0,r))
  =\nu_\Omega([0,r]),
  \qquad r\geq0.
\end{equation}

Define the Gaussian average by
\begin{equation}
\label{eq:gaussian-average}
  G_\Omega(t)
  :=
  \int_\Omega\int_{M\setminus\Omega}
  e^{-d_g(x,y)^2/(4t)}\,dV_g(y)\,dV_g(x),
  \qquad t>0.
\end{equation}
To control the error in the leading heat-kernel coefficient, for $m\geq0$
we also introduce the Gaussian moment
\begin{equation}
\label{eq:gaussian-moment}
  G_{\Omega,m}(t)
  :=
  \int_\Omega\int_{M\setminus\Omega}
  d_g(x,y)^m e^{-d_g(x,y)^2/(4t)}
  \,dV_g(y)\,dV_g(x).
\end{equation}

\begin{lemma}[Gaussian moment bound]
\label{lem:gaussian-moment-bound}
Suppose that {$\vol^{2d}(E_\Omega(r))=O(r^\alpha)$} as
$r\rightarrow0^+$.
Then, for every $m\geq0$,
\begin{equation}
\label{eq:gaussian-moment-bound}
  G_{\Omega,m}(t)
  =
  O\bigl(t^{(\alpha+m)/2}\bigr),
  \qquad t\rightarrow0^+.
\end{equation}
\end{lemma}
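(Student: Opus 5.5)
We write $G_{\Omega,m}(t)$ as a Stieltjes integral against the distance distribution $\nu_\Omega$ from \eqref{eq:distance-pushforward}, and then integrate by parts. Concretely, with $f_t(r):=r^me^{-r^2/(4t)}$, we have
\[
G_{\Omega,m}(t)=\int_{[0,D]} f_t(r)\,d\nu_\Omega(r),\qquad D:=\operatorname{diam}(M).
\]
By \eqref{eq:E-as-distribution}, the distribution function of $\nu_\Omega$ is $\Phi(r):=\nu_\Omega([0,r])=\vol^{2d}(E_\Omega(r))$. It is continuous, nondecreasing, and satisfies $\Phi(0)=0$.

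The Stieltjes integration by parts gives
\[
G_{\Omega,m}(t)=f_t(D)\Phi(D)-\int_0^D f_t'(r)\Phi(r)\,dr .
\]
The boundary term at $D$ is $O(e^{-D^2/(4t)})$, hence $O(t^N)$ for every $N$. The boundary term at $0$ vanishes because $\Phi(0)=0$ when $m=0$, and $f_t(0)=0$ otherwise. For the integral we compute
\[
|f_t'(r)|\leq \bigl(mr^{m-1}+r^{m+1}/(2t)\bigr)e^{-r^2/(4t)}.
\]

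The hypothesis gives $\Phi(r)\leq Cr^\alpha$ only for $0<r\leq r_1$. For $r\geq r_1$ we use the trivial bound $\Phi(r)\leq\vol(M)^2\leq C' r^\alpha$, with a larger constant. So $\Phi(r)\leq C''r^\alpha$ holds on all of $(0,D]$.

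It remains to estimate
\[
\int_0^\infty\bigl(mr^{m-1}+r^{m+1}/(2t)\bigr)r^\alpha e^{-r^2/(4t)}\,dr .
\]
The substitution $r=\sqrt t\,u$ turns this into $t^{(\alpha+m)/2}$ times a finite Gamma-type integral in $u$. The integral is finite since $\alpha+m>0$, which also makes the integrand integrable near $0$ when $m=0$. This yields \eqref{eq:gaussian-moment-bound}.

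The only step needing care is justifying the integration by parts for a Stieltjes measure; this is not a genuine obstacle. An alternative that avoids it is the layer-cake formula: since $f_t$ is smooth and bounded on $[0,D]$, write $f_t(r)=f_t(D)-\int_r^D f_t'(\rho)\,d\rho$ and apply Fubini, which gives the same identity directly. Either route proves the lemma.
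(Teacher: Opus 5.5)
Your argument is correct and is essentially the paper's. Both proofs write the Gaussian moment as a Stieltjes integral against the distribution function $\vol^{2d}(E_\Omega(r))$ of $\nu_\Omega$, integrate by parts, and rescale $r=\sqrt t\,u$. The paper differs only in two small ways. It first uses the pointwise bound $r^m e^{-r^2/(4t)}\le C_m t^{m/2}e^{-r^2/(8t)}$ to reduce to $G_\Omega(2t)$, so it can invoke Lemma~\ref{lem:gaussian-average} directly and never differentiates $r^m$. It also splits the integral at $r_0$ instead of enlarging the constant.

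One small correction: for non-integer $m$, in particular $0<m<1$, $f_t$ is not smooth at $0$. It is, however, absolutely continuous with $f_t'\in L^1(0,D)$, and that is all your Fubini/layer-cake justification needs.
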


{
The proof of this estimate will use the following exact relation between
the Gaussian average and $\vol^{2d}(E_\Omega(r))$.
}

\begin{lemma}
\label{lem:gaussian-average}
{
For every $t>0$,
\begin{equation}
\label{eq:gaussian-stieltjes}
  G_\Omega(t)
  =
  \frac{1}{2t}
  \int_0^\infty
  r e^{-r^2/(4t)}\vol^{2d}\bigl(E_\Omega(r)\bigr)\,dr.
\end{equation}
}
\end{lemma}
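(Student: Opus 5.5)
The plan is to rewrite the Gaussian average through the distance distribution $\nu_\Omega$ from \eqref{eq:distance-pushforward}, and then integrate by parts in the Stieltjes sense. By the definition of the pushforward,
\[
  G_\Omega(t)=\int_{[0,\infty)} e^{-r^2/(4t)}\,d\nu_\Omega(r).
\]
Here $\nu_\Omega$ is supported in $[0,\operatorname{diam}(M)]$ and has no atoms, by the discussion preceding \eqref{eq:E-as-distribution}. Set $\phi(r):=e^{-r^2/(4t)}$. It is smooth and decreasing, with $\phi(r)\to0$ as $r\to\infty$ and $-\phi'(r)=\frac{r}{2t}e^{-r^2/(4t)}$.

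Rather than quoting a Stieltjes integration-by-parts formula, I will use the layer-cake identity, which needs no continuity hypotheses:
\[
  \phi(r)=\int_r^\infty(-\phi'(u))\,du .
\]
Substituting this gives
\[
  G_\Omega(t)=\int_{[0,\infty)}\int_0^\infty \mathbf 1_{\{u>r\}}\,(-\phi'(u))\,du\,d\nu_\Omega(r).
\]
The integrand is nonnegative and both measures are $\sigma$-finite ($\nu_\Omega$ is finite), so Tonelli's theorem lets us swap the order of integration:
\[
  G_\Omega(t)=\int_0^\infty(-\phi'(u))\,\nu_\Omega([0,u))\,du .
\]

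Finally, \eqref{eq:E-as-distribution} gives $\nu_\Omega([0,u))=\vol^{2d}(E_\Omega(u))$. Inserting the formula for $-\phi'(u)$ then yields exactly \eqref{eq:gaussian-stieltjes}. For $u\ge\operatorname{diam}(M)$ the factor $\vol^{2d}(E_\Omega(u))$ is constant, equal to $\vol(\Omega)\vol(M\setminus\Omega)$, so the integral converges because of the Gaussian factor.

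No step is a serious obstacle. The only point needing care is whether the interval is open or closed at $u$, and this does not matter: the atomlessness recorded in \eqref{eq:E-as-distribution} makes $\nu_\Omega([0,u))=\nu_\Omega([0,u])$. In any case the identity holds with $[0,u)$ directly, by the Tonelli computation above.
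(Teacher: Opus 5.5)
Your proof is correct and follows the same route as the paper: you write $G_\Omega(t)=\int_{[0,\infty)}e^{-r^2/(4t)}\,d\nu_\Omega(r)$ and then integrate by parts against the distribution function $\nu_\Omega([0,r))=\vol^{2d}(E_\Omega(r))$. Your layer-cake/Tonelli computation is a self-contained justification of the Stieltjes integration by parts that the paper quotes; as you observe, it produces $\nu_\Omega([0,u))$ directly, so the atomlessness of $\nu_\Omega$ is not even needed for this identity.
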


\begin{proof}
{
By \eqref{eq:distance-pushforward},
$G_\Omega(t)=\int_{[0,\infty)}e^{-r^2/(4t)}\,d\nu_\Omega(r)$.
Equation \eqref{eq:E-as-distribution} and Stieltjes integration by parts
give \eqref{eq:gaussian-stieltjes}, since
$\vol^{2d}(E_\Omega(0))=0$, the function
$r\mapsto\vol^{2d}(E_\Omega(r))$ is bounded, and
$e^{-r^2/(4t)}\to0$ as $r\to\infty$.
}
\end{proof}

\begin{proof}[Proof of Lemma \ref{lem:gaussian-moment-bound}]
{
Choose $r_0>0$ and $C_0>0$ such that
$\vol^{2d}(E_\Omega(r))\leq C_0r^\alpha$ for $0<r\leq r_0$, and set
$B_\Omega:=\vol^{2d}\bigl(\Omega\times(M\setminus\Omega)\bigr)
=\vol(\Omega)\vol(M\setminus\Omega)$.
For $m\geq0$, let
\[
  C_m:=\sup_{u\geq0}u^m e^{-u^2/8}<\infty.
\]
For every $r,t>0$,
\[
  r^m e^{-r^2/(4t)}
  =
  t^{m/2}
  \left(\frac r{\sqrt t}\right)^m
  e^{-r^2/(8t)}e^{-r^2/(8t)}
  \leq
  C_m t^{m/2}e^{-r^2/(8t)}.
\]
Using the distance measure $\nu_\Omega$ defined above, we therefore obtain
\begin{equation}
\label{eq:moment-reduced-to-G}
  G_{\Omega,m}(t)
  \leq
  C_m t^{m/2}G_\Omega(2t).
\end{equation}
It remains to estimate $G_\Omega(2t)$. Formula
\eqref{eq:gaussian-stieltjes}, with $t$ replaced by $2t$, gives
\[
  G_\Omega(2t)
  =
  \frac{1}{4t}
  \int_0^\infty r e^{-r^2/(8t)}
  \vol^{2d}\bigl(E_\Omega(r)\bigr)\,dr.
\]
Splitting the integral at $r_0$, using the assumed bound on
$(0,r_0]$ and $0\leq\vol^{2d}(E_\Omega(r))\leq B_\Omega$ on
$[r_0,\infty)$, and making the change of variables
$u=r^2/(8t)$ in the first integral, we find
\begin{align*}
  G_\Omega(2t)
  &\leq
  \frac{C_0}{4t}
  \int_0^\infty r^{\alpha+1}e^{-r^2/(8t)}\,dr
  +
  \frac{B_\Omega}{4t}
  \int_{r_0}^\infty r e^{-r^2/(8t)}\,dr \\
  &=
  C_0\,8^{\alpha/2}
  \Gamma\left(1+\frac{\alpha}{2}\right)t^{\alpha/2}
  +
  B_\Omega e^{-r_0^2/(8t)}
  =
  O(t^{\alpha/2}).
\end{align*}
Combining this estimate with \eqref{eq:moment-reduced-to-G} proves
\eqref{eq:gaussian-moment-bound}.
}
\end{proof}

\subsection{Small-time asymptotics of the smoothed Kuznecov tail}
\label{subsec:smoothed-tail}

Let $p_t(x,y)$ denote the heat kernel of $(M,g)$. With the sign
convention used above,
\begin{equation}
\label{eq:heat-kernel-spectral-expansion}
  p_t(x,y)
  =
  \sum_{j=0}^{\infty}
  e^{-t\lambda_j^2}e_j(x)e_j(y).
\end{equation}
We shall use the standard local heat-kernel parametrix and the global
Gaussian bound; see, e.g., \cite[Chapter 2]{BGV1992} and
\cite[Chapter 8]{Grigoryan2009}, respectively.

\begin{lemma}[Heat-kernel estimates]
\label{lem:heat-kernel-estimates}
There exist $\rho_0,t_0>0$, with
$2\rho_0<\operatorname{inj}(M)$, and smooth functions $u_k$ on
$\{(x,y)\in M\times M:d_g(x,y)<2\rho_0\}$, $k=0,1,\ldots$, with
$u_0(x,x)=1$, such that for every integer $L\geq1$,
\begin{equation}
\label{eq:heat-parametrix}
  p_t(x,y)
  =
  (4\pi t)^{-{d}/2}e^{-d_g(x,y)^2/(4t)}
  \sum_{k=0}^{L-1}t^ku_k(x,y)
  +
  R_L(t,x,y)
\end{equation}
whenever $0<t\leq t_0$ and $d_g(x,y)<\rho_0$. Moreover, for each $L$
there exists a constant $C_L>0$ such that
\begin{equation}
\label{eq:parametrix-remainder}
  |R_L(t,x,y)|
  \leq C_Lt^{L-{d}/2}
\end{equation}
uniformly for $0<t\leq t_0$ and $d_g(x,y)<\rho_0$. There also exist
constants $c,C>0$ such that
\begin{equation}
\label{eq:global-gaussian-bound}
  0\leq p_t(x,y)
  \leq
  Ct^{-{d}/2}
  \exp\left(-\frac{d_g(x,y)^2}{ct}\right),
  \qquad
  0<t\leq1,\quad x,y\in M.
\end{equation}
Finally, after decreasing $\rho_0$ if necessary,
\begin{equation}
\label{eq:u0-linear-bound}
  |u_0(x,y)-1|
  \leq Cd_g(x,y),
  \qquad d_g(x,y)<\rho_0.
\end{equation}
\end{lemma}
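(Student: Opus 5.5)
This is the standard Minakshisundaram--Pleijel/Hadamard parametrix together with Gaussian upper bounds, so I would assemble it from known ingredients rather than derive it from scratch. The steps are: fix $\rho_0$ with $2\rho_0<\operatorname{inj}(M)$, build the local parametrix, control its error by Duhamel's principle, and then quote the global Gaussian bound.

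\emph{Step 1: the formal parametrix.} On $\{d_g(x,y)<2\rho_0\}$ the function $d_g(x,y)^2$ is smooth, because $y$ stays inside the injectivity radius of $x$. I make the ansatz
\[
H_L(t,x,y)=(4\pi t)^{-d/2}e^{-d_g(x,y)^2/(4t)}\sum_{k<L}t^ku_k(x,y).
\]
Requiring $(\partial_t-\Delta_y)H_L=O(t^{L-1-d/2}e^{-d_g^2/4t})$ leads to the usual transport equations along radial geodesics. For $u_0$ the equation is solved by $u_0(x,y)=\theta(x,y)^{-1/2}$, where $\theta$ is the Jacobian density of $\exp_x$ in normal coordinates. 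The higher $u_k$ are then obtained recursively as radial integrals. All of them are smooth, and $u_0(x,x)=1$ because $\theta(x,x)=1$. Smoothness of $u_0$ together with $u_0(x,x)=1$ gives $|u_0(x,y)-1|\le Cd_g(x,y)$ directly, which is \eqref{eq:u0-linear-bound}. The compactness of $M$ makes every constant uniform.

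\emph{Step 2: the remainder bound.} This is the main technical point. I multiply $H_L$ by a cutoff $\chi(d_g(x,y))$ that equals $1$ on $[0,\rho_0]$ and is supported in $[0,2\rho_0)$. This produces a global approximate kernel $\tilde H_L$ satisfying $(\partial_t-\Delta)\tilde H_L=K_L$. The error $K_L$ has two parts:
\begin{itemize}
\item away from the cutoff region, $|K_L|\le Ct^{L-1-d/2}$;
\item on the annulus $\rho_0\le d_g\le 2\rho_0$, where derivatives of $\chi$ appear, $K_L$ is $O(t^\infty)$ because of the Gaussian factor.
\end{itemize}
Duhamel's formula gives
\[
p_t-\tilde H_L=-\int_0^t e^{(t-\tau)\Delta}K_L(\tau)\,d\tau.
\]
The heat semigroup is contractive on $L^\infty$, so this difference is $O(t^{L-d/2})$ uniformly. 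For $d_g<\rho_0$ we have $\tilde H_L=H_L$, so this is exactly \eqref{eq:parametrix-remainder}. Alternatively, one can simply cite \cite[Thm.~2.30]{BGV1992}, where this is proved in precisely this form.

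\emph{Step 3: the global Gaussian bound.} Positivity $p_t\ge0$ follows from the parabolic maximum principle. The upper estimate \eqref{eq:global-gaussian-bound} for $0<t\le1$ is the Li--Yau or Davies--Gaffney Gaussian bound on a compact manifold, which holds because Ricci curvature is bounded below and balls are uniformly $d$-dimensional (Lemma \ref{lem:uniform-ball-volume}). I would cite \cite[Ch.~8]{Grigoryan2009} for this step rather than reprove it.
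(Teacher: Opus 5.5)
Your proposal is correct and takes the same route as the paper, which gives no proof of its own and simply cites the parametrix of \cite[Chapter 2]{BGV1992} and the Gaussian upper bound of \cite[Chapter 8]{Grigoryan2009}; these are exactly the ingredients you assemble. One small repair is needed in your Step 2: the bound $\int_0^t C\tau^{L-1-d/2}\,d\tau$ converges only when $L>d/2$, so run the Duhamel argument for some $L'>d/2$ and recover the cases $L\leq d/2$ by absorbing the terms $k=L,\dots,L'-1$ into $R_L$, since each is $O(t^{k-d/2})=O(t^{L-d/2})$ for $t\leq t_0$ (in the paper's application one has $L>\alpha/2>d/2$ anyway).
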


With the heat-kernel estimates established, we now turn to the spectral
side and introduce the smoothed Kuznecov tail $H_\Omega(t)$ by
\begin{equation}
\label{eq:smoothed-tail}
\begin{aligned}
  H_\Omega(t)
  &:=
  \sum_{j=0}^{\infty}
  \bigl(1-e^{-t\lambda_j^2}\bigr)|a_j|^2
  \\
  &=
  \vol(\Omega)
  -
  \sum_{j=0}^{\infty}
  e^{-t\lambda_j^2}|a_j|^2,
  \qquad t>0.
\end{aligned}
\end{equation}

We next relate $H_\Omega(t)$ to the heat kernel $p_t$.

\begin{lemma}
\label{lem:smoothed-tail-representation}
For every $t>0$,
\begin{equation}
\label{eq:smoothed-tail-geometric}
  H_\Omega(t)
  =
  \int_\Omega\int_{M\setminus\Omega}
  p_t(x,y)\,dV_g(y)\,dV_g(x).
\end{equation}
\end{lemma}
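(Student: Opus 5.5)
The identity follows from the spectral expansion \eqref{eq:heat-kernel-spectral-expansion}, the semigroup/stochastic completeness property $\int_M p_t(x,y)\,dV_g(y)=1$, and Parseval. We compute $\sum_j e^{-t\lambda_j^2}|a_j|^2$ as a double integral of $p_t$ over $\Omega\times\Omega$, and then subtract it from $\vol(\Omega)=\int_\Omega\int_M p_t(x,y)\,dV_g(y)\,dV_g(x)$.

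\emph{Step 1.} Write $e^{t\Delta_g}f=\sum_j e^{-t\lambda_j^2}\langle f,e_j\rangle e_j$, which converges in $L^2(M)$ for every $f\in L^2(M)$. Applying this with $f=\mathbf 1_\Omega$ and pairing against $\mathbf 1_\Omega$ gives
\[
\sum_{j=0}^\infty e^{-t\lambda_j^2}|a_j|^2=\langle e^{t\Delta_g}\mathbf 1_\Omega,\mathbf 1_\Omega\rangle_{L^2(M)}.
\]
The $e_j$ are real, so $a_j$ is real and no conjugation issues arise.

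\emph{Step 2.} For $t>0$, the series \eqref{eq:heat-kernel-spectral-expansion} converges uniformly on $M\times M$. This follows from Weyl's law together with the sup-norm bound $\|e_j\|_\infty\le C(1+\lambda_j)^{(d-1)/2}$; alternatively, one can use the standard fact that $p_t$ is the smooth integral kernel of $e^{t\Delta_g}$. Either way, $e^{t\Delta_g}f(x)=\int_M p_t(x,y)f(y)\,dV_g(y)$. Hence, with Fubini justified by the boundedness of $p_t$ on the compact set $M\times M$,
\[
\sum_j e^{-t\lambda_j^2}|a_j|^2=\int_\Omega\int_\Omega p_t(x,y)\,dV_g(y)\,dV_g(x).
\]

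\emph{Step 3.} Since $M$ is closed, $\Delta_g 1=0$ and $e_0=\vol(M)^{-1/2}$. Because the other $e_j$ are orthogonal to constants, $e^{t\Delta_g}1=1$, that is, $\int_M p_t(x,y)\,dV_g(y)=1$ for every $x$. Integrating over $x\in\Omega$ gives $\vol(\Omega)=\int_\Omega\int_M p_t\,dV_g(y)\,dV_g(x)$. Subtracting the expression from Step 2 and using the definition \eqref{eq:smoothed-tail} yields
\[
H_\Omega(t)=\int_\Omega\int_{M\setminus\Omega}p_t(x,y)\,dV_g(y)\,dV_g(x),
\]
which is \eqref{eq:smoothed-tail-geometric}. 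Nonnegativity of $p_t$ from \eqref{eq:global-gaussian-bound} is not even needed here.

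The only point requiring care is identifying the $L^2$ spectral series with integration against the kernel $p_t$, i.e.\ the interchange of the sum over $j$ with the integrals in Step 2. This is routine: absolute convergence of $\sum_j e^{-t\lambda_j^2}|e_j(x)e_j(y)|$ uniformly on $M\times M$ follows from $\sum_j e^{-t\lambda_j^2}e_j(x)^2=p_t(x,x)\le Ct^{-d/2}$ and Cauchy--Schwarz. So no step should present a genuine obstacle.
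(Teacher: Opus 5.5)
Your proposal is correct and follows essentially the same route as the paper: identify $\sum_j e^{-t\lambda_j^2}|a_j|^2=\langle e^{t\Delta_g}\mathbf 1_\Omega,\mathbf 1_\Omega\rangle$ with $\int_\Omega\int_\Omega p_t$ via the kernel representation of the semigroup, use $e^{t\Delta_g}1=1$ to get $\vol(\Omega)=\int_\Omega\int_M p_t$, and subtract. One minor quibble: your closing Cauchy--Schwarz argument gives a uniform bound on $\sum_j e^{-t\lambda_j^2}|e_j(x)e_j(y)|$ rather than uniform convergence, though this is still enough for dominated convergence on $\Omega\times\Omega$; and as in the paper, you can skip the pointwise interchange altogether, since Step 1 is already an $L^2$ identity and $p_t$ is the integral kernel of $e^{t\Delta_g}$.
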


\begin{proof}
Since $p_t$ is smooth for $t>0$ and $M$ is compact, the heat semigroup
admits the integral representation
\[
  e^{t\Delta_g}f(x)
  =
  \int_M p_t(x,y)f(y)\,dV_g(y)
\]
for $f\in L^2(M)$. Hence, by the spectral theorem,
\[
  \int_\Omega\int_\Omega
  p_t(x,y)\,dV_g(y)\,dV_g(x)
  =
  \left\langle e^{t\Delta_g}\mathbf 1_\Omega,
  \mathbf 1_\Omega\right\rangle_{L^2(M)}
  =
  \sum_{j=0}^{\infty}
  e^{-t\lambda_j^2}|a_j|^2.
\]
Since $\Delta_g1=0$, we have $e^{t\Delta_g}1=1$, and hence
\[
  \int_M p_t(x,y)\,dV_g(y)=1
  \qquad\text{for every }x\in M.
\]
Therefore
\[
  \int_\Omega\int_M
  p_t(x,y)\,dV_g(y)\,dV_g(x)
  =
  \vol(\Omega).
\]
Subtracting the first identity from the second and using
\eqref{eq:smoothed-tail} gives
\eqref{eq:smoothed-tail-geometric}.
\end{proof}

We next establish the asymptotic relation between $H_\Omega(t)$ and
$G_\Omega(t)$.

\begin{proposition}
\label{prop:gaussian-to-smoothed-tail}
Suppose that
{$\vol^{2d}(E_\Omega(r))=O(r^\alpha)$} as
$r\rightarrow 0^+$.
Then
\begin{equation}
\label{eq:smoothed-tail-gaussian}
  H_\Omega(t)
  =
  (4\pi t)^{-{d}/2}G_\Omega(t)
  +o(t^\beta),
  \qquad t\rightarrow 0^+.
\end{equation}
\end{proposition}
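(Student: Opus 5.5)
We start from the representation of Lemma \ref{lem:smoothed-tail-representation}, $H_\Omega(t)=\int_\Omega\int_{M\setminus\Omega}p_t(x,y)\,dV_g(y)\,dV_g(x)$. We split the domain of integration into the near region $\{d_g(x,y)<\rho_0\}$ and the far region $\{d_g(x,y)\geq\rho_0\}$, with $\rho_0,t_0$ from Lemma \ref{lem:heat-kernel-estimates}. Throughout we recall $\alpha=2d-s$ and $\beta=(d-s)/2=(\alpha-d)/2$. The goal is to show that every contribution other than $(4\pi t)^{-d/2}G_\Omega(t)$ is $o(t^\beta)$. In fact each error will be $O(t^{\beta+1/2})$ or exponentially small.

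\textbf{Far region.} By the global Gaussian bound \eqref{eq:global-gaussian-bound}, the contribution of $\{d_g\geq\rho_0\}$ to $H_\Omega(t)$ is at most $Ct^{-d/2}e^{-\rho_0^2/(ct)}\vol(\Omega)\vol(M\setminus\Omega)$, which is exponentially small. The same holds for the far part of $(4\pi t)^{-d/2}G_\Omega(t)$, since there $e^{-d_g^2/(4t)}\leq e^{-\rho_0^2/(4t)}$. Consequently we only need to compare the two integrands on the near region.

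\textbf{Near region.} Here we apply the parametrix \eqref{eq:heat-parametrix} with $L$ chosen large, say $L>d/2+\beta+1$, and examine the difference term by term.
\begin{itemize}
\item The $k=0$ term differs from the Gaussian integrand by $(4\pi t)^{-d/2}e^{-d_g^2/(4t)}(u_0-1)$. By \eqref{eq:u0-linear-bound} its contribution is bounded by $Ct^{-d/2}G_{\Omega,1}(t)$. Lemma \ref{lem:gaussian-moment-bound} gives $G_{\Omega,1}(t)=O(t^{(\alpha+1)/2})$, so this contribution is $O(t^{\beta+1/2})$.
\item For $1\leq k\leq L-1$, the factors $u_k$ are bounded on $\{d_g\leq\rho_0\}$, shrinking $\rho_0$ slightly if needed so that we work on a compact set inside the domain of smoothness. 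Each such term is therefore bounded by $Ct^{k-d/2}G_\Omega(t)$. Since $G_\Omega=G_{\Omega,0}=O(t^{\alpha/2})$ by the same lemma, this is $O(t^{\beta+k})$.
\item The remainder $R_L$ is bounded by $C_Lt^{L-d/2}$ times a finite total volume. With our choice of $L$ this is $o(t^\beta)$.
\end{itemize}

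The only point needing care is the first bullet. The linear vanishing of $u_0-1$ must be paired with the first Gaussian moment, and this pairing is exactly what Lemma \ref{lem:gaussian-moment-bound} is designed to control. It also explains the hypothesis $\vol^{2d}(E_\Omega(r))=O(r^\alpha)$. The remaining bookkeeping is routine: the parametrix is only valid for $t\leq t_0$, which is harmless as $t\to0^+$. Summing the three bullets with the far-region estimate yields \eqref{eq:smoothed-tail-gaussian}.
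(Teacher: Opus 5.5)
Your proposal is correct and follows essentially the same route as the paper: split into near and far regions, insert the parametrix, control the $k=0$ term via \eqref{eq:u0-linear-bound} and the first Gaussian moment from Lemma \ref{lem:gaussian-moment-bound}, bound the $k\geq1$ terms by $t^{k-d/2}G_\Omega(t)$, and choose $L$ large enough that the remainder is $o(t^\beta)$. The remaining differences are cosmetic: you take a larger $L$ than the paper's $L>\alpha/2$, and you discard the far part of $G_\Omega$ separately, whereas the paper absorbs it into the estimate \eqref{eq:I0-minus-G}.
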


\begin{proof}
Fix $\rho_0,t_0$ as in Lemma \ref{lem:heat-kernel-estimates}, and choose
an integer $L\geq2$ such that $L>\alpha/2$. Set
$\mathcal D_{\rho_0}:=\{(x,y)\in\Omega\times(M\setminus\Omega):
d_g(x,y)<\rho_0\}$.
{Set also
$B_\Omega:=\vol^{2d}(\Omega\times(M\setminus\Omega))
=\vol(\Omega)\vol(M\setminus\Omega)$.}
By \eqref{eq:smoothed-tail-geometric}, we may write
\begin{align}
  H_\Omega(t)
  &=
  \iint_{\mathcal D_{\rho_0}}
  p_t(x,y)\,dV_g(y)\,dV_g(x)
  \notag\\
  &\quad+
  \iint_{\substack{x\in\Omega,\ y\in M\setminus\Omega\\
                   d_g(x,y)\geq\rho_0}}
  p_t(x,y)\,dV_g(y)\,dV_g(x)
  \notag\\
  &=:H_{\mathrm{near}}(t)+H_{\mathrm{far}}(t).
  \label{eq:H-near-far}
\end{align}
The global bound \eqref{eq:global-gaussian-bound} gives
\begin{equation}
\label{eq:H-far}
  0\leq H_{\mathrm{far}}(t)
  \leq
  CB_\Omega t^{-{d}/2}
  \exp\left(-\frac{\rho_0^2}{ct}\right)
  =
  o(t^\beta).
\end{equation}

For $0<t\leq t_0$, insert \eqref{eq:heat-parametrix} into the first
integral in \eqref{eq:H-near-far}. If
\begin{equation}
\label{eq:Ik-definition}
  I_k(t)
  :=
  \iint_{\mathcal D_{\rho_0}}
  e^{-d_g(x,y)^2/(4t)}u_k(x,y)
  \,dV_g(y)\,dV_g(x),
\end{equation}
then
\begin{equation}
\label{eq:H-near-parametrix}
  H_{\mathrm{near}}(t)
  =
  (4\pi t)^{-{d}/2}
  \sum_{k=0}^{L-1}t^kI_k(t)
  +
  \mathcal R_L(t),
\end{equation}
where \eqref{eq:parametrix-remainder} implies
\begin{equation}
\label{eq:integrated-parametrix-remainder}
  |\mathcal R_L(t)|
  \leq
  C_LB_\Omega t^{L-{d}/2}
  =
  o(t^\beta).
\end{equation}
The last equality follows from $L>\alpha/2$ and
$\beta=(\alpha-{d})/2$.

We first consider $I_0(t)$. By \eqref{eq:u0-linear-bound},
\begin{align}
  |I_0(t)-G_\Omega(t)|
  &\leq
  C
  \iint_{\mathcal D_{\rho_0}}
  d_g(x,y)e^{-d_g(x,y)^2/(4t)}
  \,dV_g(y)\,dV_g(x)
  \notag\\
  &\quad+
  \iint_{\substack{x\in\Omega,\ y\in M\setminus\Omega\\
                   d_g(x,y)\geq\rho_0}}
  e^{-d_g(x,y)^2/(4t)}
  \,dV_g(y)\,dV_g(x)
  \notag\\
  &\leq
  CG_{\Omega,1}(t)
  +
  B_\Omega e^{-\rho_0^2/(4t)}.
  \label{eq:I0-minus-G}
\end{align}
Lemma \ref{lem:gaussian-moment-bound} therefore gives
\begin{equation}
\label{eq:I0-contribution}
  (4\pi t)^{-{d}/2}I_0(t)
  =
  (4\pi t)^{-{d}/2}G_\Omega(t)
  +
  O(t^{\beta+1/2})
  +
  O\bigl(t^{-{d}/2}e^{-\rho_0^2/(4t)}\bigr).
\end{equation}

For $1\leq k\leq L-1$, the function $u_k$ is bounded on the closure of
the fixed neighborhood $\{d_g(x,y)<\rho_0\}$. Hence
\begin{align}
  \left|
  (4\pi t)^{-{d}/2}t^kI_k(t)
  \right|
  &\leq
  C_kt^{k-{d}/2}G_\Omega(t)
  \notag\\
  &=
  O(t^{\beta+k})
  =
  o(t^\beta).
  \label{eq:higher-parametrix-terms}
\end{align}
Collecting the preceding estimates, we obtain
\begin{align*}
  H_\Omega(t)
  &=
  (4\pi t)^{-{d}/2}G_\Omega(t)
  +
  O(t^{\beta+1/2})
  +
  \sum_{k=1}^{L-1}O(t^{\beta+k})
  \\
  &\quad+
  O(t^{L-{d}/2})
  +
  O\bigl(t^{-{d}/2}e^{-\rho_0^2/(4t)}\bigr)
  +
  O\bigl(t^{-{d}/2}e^{-\rho_0^2/(ct)}\bigr).
\end{align*}
Every error term in the last two lines is $o(t^\beta)$. This proves
\eqref{eq:smoothed-tail-gaussian}.
\end{proof}

\subsection{The Kuznecov tail}
\label{subsec:tauberian}

In this subsection, we use a classical Tauberian theorem to pass from the
smoothed Kuznecov tail to the sharp Kuznecov tail.

For the squared spectral parameter $S\geq0$, define
\begin{equation}
\label{eq:sharp-tail}
  \mathcal T_\Omega(S)
  :=
  \sum_{\lambda_j^2>S}|a_j|^2
  =
  \vol(\Omega)-N_\Omega(\sqrt S).
\end{equation}
The function $\mathcal T_\Omega$ is bounded, nonnegative,
nonincreasing, and right-continuous. Moreover,
$\mathcal T_\Omega(S)\to0$ as $S\to\infty$ by
\eqref{eq:counting-and-parseval}.

The following elementary identity explains why $H_\Omega(t)$ is
naturally viewed as a smoothed Kuznecov tail.

\begin{lemma}
\label{lem:tail-laplace}
For every $t>0$,
\begin{equation}
\label{eq:tail-laplace}
  H_\Omega(t)
  =
  t\int_0^\infty
  e^{-tS}\mathcal T_\Omega(S)\,dS.
\end{equation}
\end{lemma}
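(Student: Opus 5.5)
The plan is to verify the identity directly from the definitions, by expanding the right-hand side of \eqref{eq:tail-laplace} as a series over $j$ and integrating term by term. By \eqref{eq:sharp-tail},
\[
  \mathcal T_\Omega(S)=\sum_{j=0}^{\infty}|a_j|^2\,\mathbf 1_{\{\lambda_j^2>S\}},
\]
where every summand is nonnegative. Tonelli's theorem therefore allows us to interchange the sum and the integral. This gives
\[
  t\int_0^\infty e^{-tS}\mathcal T_\Omega(S)\,dS
  =\sum_{j=0}^{\infty}|a_j|^2\, t\int_0^{\lambda_j^2}e^{-tS}\,dS .
\]

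Each inner integral is elementary:
\[
  t\int_0^{\lambda_j^2}e^{-tS}\,dS=1-e^{-t\lambda_j^2}.
\]
For $j=0$, where $\lambda_0=0$, both sides vanish, so that case is consistent. The right-hand side is thus $\sum_j(1-e^{-t\lambda_j^2})|a_j|^2$, which is exactly the first expression for $H_\Omega(t)$ in \eqref{eq:smoothed-tail}.

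The only points requiring care are convergence and the legitimacy of the interchange, and neither is a real obstacle. Nonnegativity makes Tonelli applicable without further hypotheses. Finiteness holds because $0\le\mathcal T_\Omega\le\vol(\Omega)$, so the integral is at most $\vol(\Omega)$. The series also converges, since $\sum_j|a_j|^2=\vol(\Omega)$ by \eqref{eq:counting-and-parseval}.

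Equivalently, one could integrate by parts in Stieltjes form using $d\mathcal T_\Omega=-\sum_j|a_j|^2\delta_{\lambda_j^2}$. That route requires boundary terms at $0$ and $\infty$, which are handled by the boundedness of $\mathcal T_\Omega$ and the factor $e^{-tS}$. The Tonelli argument avoids these boundary terms, so I would use it.
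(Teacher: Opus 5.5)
Your proof is correct and follows essentially the same route as the paper: write $\mathcal T_\Omega(S)=\sum_j|a_j|^2\mathbf 1_{\{\lambda_j^2>S\}}$, interchange sum and integral by Tonelli, and evaluate $t\int_0^{\lambda_j^2}e^{-tS}\,dS=1-e^{-t\lambda_j^2}$ to recover $H_\Omega(t)$. Your remarks on finiteness and on the $j=0$ term are harmless additions; Tonelli already covers the interchange regardless of finiteness.
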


\begin{proof}
All summands are nonnegative. Tonelli's theorem therefore gives
\begin{align*}
  t\int_0^\infty
  e^{-tS}\mathcal T_\Omega(S)\,dS
  &=
  t\sum_{j=0}^{\infty}|a_j|^2
  \int_0^{\lambda_j^2}e^{-tS}\,dS
  \\
  &=
  \sum_{j=0}^{\infty}
  \bigl(1-e^{-t\lambda_j^2}\bigr)|a_j|^2
  =
  H_\Omega(t).
\end{align*}
\end{proof}

{For the precise asymptotic, we use the following tail form of
Karamata's theorem.}

\begin{theorem}[{Karamata's theorem for\allowbreak\ monotone tails}]
\label{thm:monotone-tauberian}
{
Let $T:[0,\infty)\to[0,\infty)$ be nonincreasing, let
$0<\beta<1$, and let $A>0$. Then
\[
  t\int_0^\infty e^{-tS}T(S)\,dS
  =
  At^\beta+o(t^\beta),
  \qquad t\rightarrow 0^+,
\]
if and only if
\[
  T(S)
  =
  \frac{A}{\Gamma(1-\beta)}S^{-\beta}
  +o(S^{-\beta}),
  \qquad S\to\infty.
\]
}
\end{theorem}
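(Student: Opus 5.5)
This is the classical Hardy--Littlewood--Karamata theorem in tail form. I would reduce it to the standard Karamata Tauberian theorem for measures by writing the Laplace transform as a Laplace--Stieltjes transform of the measure $-dT$.

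\textbf{Setting up the measure.} Since $T$ is nonincreasing and nonnegative, the limit $T(\infty):=\lim_{S\to\infty}T(S)\geq0$ exists. Integration by parts gives
\[
t\int_0^\infty e^{-tS}T(S)\,dS = T(0)-\int_{[0,\infty)}e^{-tS}\,d\mu(S),
\]
where $\mu:=-dT$ is a finite positive measure on $[0,\infty)$. Here I take the right-continuous version of $T$, which changes neither side of the theorem. Either hypothesis forces $T(\infty)=0$: for the spectral side this is immediate, and for the transform side, letting $t\to0^+$ in $t\int_0^\infty e^{-tS}T\,dS\ge T(\infty)$ gives $T(\infty)\le 0$. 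Hence $T(S)=\mu((S,\infty))$ and $T(0)=\mu([0,\infty))$, so the left side equals
\[
\int_{[0,\infty)}(1-e^{-tS})\,d\mu(S).
\]

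\textbf{Abelian direction.} Assume $T(S)=\frac{A}{\Gamma(1-\beta)}S^{-\beta}+o(S^{-\beta})$. Substitute $S=u/t$, so the transform becomes $\int_0^\infty e^{-u}T(u/t)\,du$. Split the integral at $u=\delta$. On $u\geq\delta$, the ratio $T(u/t)\,t^{-\beta}$ converges uniformly on compacts to $\frac{A}{\Gamma(1-\beta)}u^{-\beta}$ and is dominated by $Cu^{-\beta}$ for small $t$. On $u<\delta$, use $T\le T(0)$ together with monotonicity, which gives $T(u/t)\le Cu^{-\beta}t^{\beta}$ once $u/t$ is large; the piece where $u/t$ is bounded contributes $O(t)=o(t^\beta)$ because $\beta<1$. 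Dominated convergence then gives $A t^\beta\,\Gamma(1-\beta)/\Gamma(1-\beta)=At^\beta$, using $\int_0^\infty e^{-u}u^{-\beta}\,du=\Gamma(1-\beta)$.

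\textbf{Tauberian direction (the main step).} Assume $t\int_0^\infty e^{-tS}T(S)\,dS\sim At^\beta$. I would apply Karamata's theorem to the measure $U(x)\,dx$ with $U(x):=\int_0^x T(S)\,dS$, or more directly to the monotone function $T$ through the standard argument, in three steps.

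1. For each fixed $\lambda>0$, the scaled functions $T_t(u):=t^{-\beta}T(u/t)$ satisfy
\[
\int_0^\infty e^{-\lambda u}T_t(u)\,du = \lambda^{-1}(\lambda t)^{\dots}\cdots \to A\lambda^{\beta-1}=\int_0^\infty e^{-\lambda u}\frac{A}{\Gamma(1-\beta)}u^{-\beta}\,du.
\]
2. The measures $T_t(u)\,du$ are locally uniformly bounded; this follows from the bound at $\lambda=1$ together with $e^{-u}\ge e^{-K}$ on $[0,K]$. Convergence of Laplace transforms for all $\lambda>0$ then yields vague convergence $T_t(u)\,du\to \frac{A}{\Gamma(1-\beta)}u^{-\beta}\,du$. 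This is the continuity theorem for Laplace transforms, proved via Weierstrass approximation of continuous functions by polynomials in $e^{-u}$.
3. Monotonicity upgrades this to pointwise convergence at $u=1$. For $\varepsilon>0$,
\[
\varepsilon\,T_t(1)\le\int_{1-\varepsilon}^1T_t \to \frac{A}{\Gamma(1-\beta)}\int_{1-\varepsilon}^1u^{-\beta}\,du,
\]
and similarly $\varepsilon\,T_t(1)\ge\int_1^{1+\varepsilon}T_t$. Dividing by $\varepsilon$ and letting $\varepsilon\to0$ gives $T_t(1)\to\frac{A}{\Gamma(1-\beta)}$. Setting $S=1/t$, this is exactly $T(S)S^{\beta}\to A/\Gamma(1-\beta)$.

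I expect step 2 to be the only delicate point. If one prefers, it can be replaced by citing Karamata's Tauberian theorem for measures (Feller, Vol.~II, XIII.5, or Bingham--Goldie--Teugels, Theorem~1.7.1) applied to $\mu$. In that route, the asymptotic $\int(1-e^{-tS})\,d\mu\sim At^\beta$ with $0<\beta<1$ is equivalent to $\mu((S,\infty))\sim \frac{A}{\Gamma(1-\beta)}S^{-\beta}$; this is the standard tail version, valid precisely because $0<\beta<1$.
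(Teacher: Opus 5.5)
Your proposal is correct and follows essentially the same route as the paper. The Abelian direction is the same substitution $u=tS$, with a split at $u=tS_0$ and dominated convergence. Your Tauberian steps 1--3 (vague convergence of $t^{-\beta}T(u/t)\,du$ via the continuity theorem, then a monotonicity squeeze at $u=1$) simply carry out the proofs of Karamata's Tauberian theorem for $U(S)=\int_0^S T(u)\,du$ and of the monotone density theorem, which the paper cites from \cite[Theorems 1.7.1 and 1.7.2]{BGT1987}.

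One small slip: Karamata's theorem should be applied to the measure $dU(x)=T(x)\,dx$, not to ``$U(x)\,dx$''. Also, the elided computation in step 1 is just $\int_0^\infty e^{-\lambda u}T_t(u)\,du=\lambda^{-1}t^{-\beta}\cdot(\lambda t)\int_0^\infty e^{-\lambda tS}T(S)\,dS\to A\lambda^{\beta-1}$.
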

The implication from the second asymptotic to the first is Abelian.
Indeed, the change of variables $u=tS$ gives
\begin{align*}
  t\int_0^\infty e^{-tS}T(S)\,dS
  &=
  \int_0^\infty e^{-u}T(u/t)\,du
  \\
  &=
  \frac{A}{\Gamma(1-\beta)}t^\beta
  \int_0^\infty e^{-u}u^{-\beta}\,du
  +o(t^\beta)
  \\
  &=
  At^\beta+o(t^\beta).
\end{align*}
The error estimate follows by splitting the integral at $u=tS_0$ and
applying dominated convergence; the condition $0<\beta<1$ ensures that
$u^{-\beta}$ is integrable at the origin.

The reverse implication is Tauberian. Applying Karamata's Tauberian
theorem to
\[
  U(S):=\int_0^S T(u)\,du
\]
and then the monotone density theorem yields the stated asymptotic for
$T$. See \cite[Theorems 1.7.1 and 1.7.2]{BGT1987}.

The two-sided estimate needed below follows instead from a direct
comparison argument.

\begin{corollary}[Comparison form]
\label{cor:tauberian-comparison}
Let $T:[0,\infty)\to[0,\infty)$ be nonincreasing, and let
$0<\beta<1$. If there exist constants $c,C>0$ such that
\[
  ct^\beta
  \leq
  t\int_0^\infty e^{-tS}T(S)\,dS
  \leq
  Ct^\beta
\]
for all sufficiently small $t>0$, then there exist constants
$c',C'>0$ such that
\[
  c'S^{-\beta}
  \leq T(S)
  \leq C'S^{-\beta}
\]
for all sufficiently large $S$.
\end{corollary}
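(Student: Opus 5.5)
Both bounds come from direct comparison, using only the monotonicity of $T$ and the substitution $u=tS$, which writes the Laplace average as
\[
  L(t):=t\int_0^\infty e^{-tS}T(S)\,dS=\int_0^\infty e^{-u}T(u/t)\,du .
\]
The upper bound for $T$ is immediate. Since $T$ is nonincreasing, for any $S>0$ and $t>0$,
\[
  L(t)\geq t\int_0^S e^{-tS'}T(S')\,dS'\geq T(S)\bigl(1-e^{-tS}\bigr).
\]
Choose $t=1/S$ with $S$ large enough that $1/S$ lies in the range where $L(t)\leq Ct^\beta$. This gives
\[
  T(S)\leq \frac{C}{1-e^{-1}}\,S^{-\beta}.
\]

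For the lower bound I will split $L(t)$ at $u=a$ and at $u=b$, with $0<a<b$ to be chosen, and bound each piece using the upper bound just proved. On $[0,a]$ the estimate $T\leq C'S^{-\beta}$ is not available near $S=0$. I will use instead $T(u/t)\leq C'(u/t)^{-\beta}$ for $u/t\geq S_0$ and $T\leq T(0)$ for $u/t< S_0$. This gives
\[
  \int_0^a e^{-u}T(u/t)\,du\leq T(0)\,tS_0+C't^\beta\int_0^a u^{-\beta}\,du
  = T(0)\,tS_0+\frac{C'a^{1-\beta}}{1-\beta}\,t^\beta .
\]
Since $\beta<1$, the first term is $o(t^\beta)$. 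Similarly,
\[
  \int_b^\infty e^{-u}T(u/t)\,du\leq C't^\beta\int_b^\infty e^{-u}u^{-\beta}\,du .
\]
On the middle range, monotonicity gives $T(u/t)\leq T(a/t)$, so this piece is at most $T(a/t)$.

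Combining the three pieces with $ct^\beta\leq L(t)$, I get
\[
  ct^\beta\leq T(a/t)+t^\beta\Bigl(\frac{C'a^{1-\beta}}{1-\beta}+C'\int_b^\infty e^{-u}u^{-\beta}\,du\Bigr)+o(t^\beta).
\]
I now fix $a$ small and $b$ large so that the bracket is at most $c/2$. Then for small $t$,
\[
  T(a/t)\geq \tfrac{c}{4}\,t^\beta .
\]
Putting $S=a/t$ gives $T(S)\geq \tfrac{c}{4}a^\beta S^{-\beta}$ for all large $S$.

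The main obstacle is this lower bound. A mass-only argument does not work there, because a priori $T$ could put all of its weight at very small $S$. The step that rules this out is feeding the already-established upper bound $T\lesssim S^{-\beta}$ back in, which makes both the small-$u$ contribution and the large-$u$ contribution small multiples of $t^\beta$. The hypothesis $\beta<1$ is exactly what makes the small-$u$ piece controllable, through the integrability of $u^{-\beta}$ at the origin and the bound $tS_0=o(t^\beta)$.
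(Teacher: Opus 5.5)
Your proof is correct and follows essentially the same comparison argument as the paper. The upper bound comes from truncating the Laplace integral at $S$ and using monotonicity. The lower bound comes from splitting at a small multiple of the scale, showing the small-$S$ part contributes at most half of $ct^\beta$, and bounding the rest by $T$ at the split point. The only differences are minor: the paper controls the small piece $\frac{1}{R}\int_0^{\theta R}T(S)\,dS\leq e\theta L\bigl((\theta R)^{-1}\bigr)$ directly by the Laplace upper bound at the rescaled time, so it needs neither your pointwise bound with $T(0)$ near the origin nor your extra split at $b$. That split is redundant anyway, since $\int_a^\infty e^{-u}T(u/t)\,du\leq T(a/t)$ already.
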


\begin{proof}
Set $L(t):=t\int_0^\infty e^{-tS}T(S)\,dS$. For all sufficiently
large $R$, monotonicity and the assumed upper bound give
\[
e^{-1}T(R)
\leq
\frac{1}{R}\int_0^R e^{-S/R}T(S)\,dS
\leq
L(R^{-1})
\leq
CR^{-\beta}.
\]
Thus $T(R)\leq eCR^{-\beta}$.

For the reverse inequality, choose $\theta\in(0,1)$ so that
$eC\theta^{1-\beta}\leq c/2$. For all sufficiently large $R$,
\[
\frac{1}{R}\int_0^{\theta R}T(S)\,dS
\leq
e\theta L\bigl((\theta R)^{-1}\bigr)
\leq
eC\theta^{1-\beta}R^{-\beta}
\leq
\frac{c}{2}R^{-\beta}.
\]
Monotonicity also gives
\[
L(R^{-1})
\leq
\frac{1}{R}\int_0^{\theta R}T(S)\,dS
+
e^{-\theta}T(\theta R).
\]
Combining this with $L(R^{-1})\geq cR^{-\beta}$, we obtain
$T(\theta R)\geq \frac{c}{2}e^\theta R^{-\beta}$. Replacing
$\theta R$ by $S$ proves the lower bound.
\end{proof}

\begin{proof}[Proof of Theorem \ref{thm:crossing-volume-kuznecov}]
{
Suppose first that
$0<\mathcal V^s(\partial\Omega)<\infty$. By
\eqref{eq:crossing-volume-definition},
\[
  \vol^{2d}\bigl(E_\Omega(r)\bigr)
  =
  \frac{\vol(B^{d-1})}{d+1}\,
  \mathcal V^s(\partial\Omega)r^{2{d}-s}
  +o\bigl(r^{2{d}-s}\bigr),
  \qquad r\rightarrow0^+.
\]
With $\alpha$ and $\beta$ as defined above, Lemma \ref{lem:gaussian-average}
and the change of variables $r=\sqrt t\,u$ give
\[
  G_\Omega(t)
  =
  \frac{\vol(B^{d-1})}{d+1}\,
  \mathcal V^s(\partial\Omega)
  2^\alpha\Gamma\left(1+\frac{\alpha}{2}\right)
  t^{\alpha/2}
  +o\bigl(t^{\alpha/2}\bigr).
\]
Here the error term follows by splitting the integral in
\eqref{eq:gaussian-stieltjes} at a fixed sufficiently small radius and
using the Gaussian decay on the remaining part. Proposition \ref{prop:gaussian-to-smoothed-tail}
therefore yields
\[
  H_\Omega(t)=A_\Omega t^\beta+o(t^\beta),
\]
where
\[
  A_\Omega
  =\frac{\vol(B^{d-1})}{d+1}\,
  \mathcal V^s(\partial\Omega)\,
  2^{{d}-s}\pi^{-{d}/2}
  \Gamma\left(1+{d}-\frac{s}{2}\right).
\]
Lemma \ref{lem:tail-laplace} and
Theorem \ref{thm:monotone-tauberian} now give
\[
  \mathcal T_\Omega(S)
  =
  \frac{A_\Omega}{\Gamma(1-\beta)}S^{-\beta}
  +o(S^{-\beta}).
\]
Substituting $S=\lambda^2$ in \eqref{eq:sharp-tail} proves
\eqref{eq:intro-two-term-kuznecov}, with $C_\Omega$ given by
\eqref{eq:intro-kuznecov-constant}.

Conversely, suppose that \eqref{eq:intro-two-term-kuznecov} holds for
some $C_\Omega>0$. Then
\[
  \mathcal T_\Omega(S)
  =
  C_\Omega S^{-\beta}+o(S^{-\beta}).
\]
The Abelian implication in
Theorem \ref{thm:monotone-tauberian}, together with
Lemma \ref{lem:tail-laplace}, yields
\begin{equation}
\label{eq:reverse-smoothed-tail}
  H_\Omega(t)
  =
  C_\Omega\Gamma(1-\beta)t^\beta
  +o(t^\beta).
\end{equation}
The local parametrix in
Lemma \ref{lem:heat-kernel-estimates} implies that, for all sufficiently
small $r>0$,
\[
  p_{r^2}(x,y)\geq cr^{-{d}}
  \qquad\text{whenever }d_g(x,y)<r.
\]
Hence Lemma \ref{lem:smoothed-tail-representation} gives
\[
  H_\Omega(r^2)
  \geq
  cr^{-{d}}\vol^{2d}\bigl(E_\Omega(r)\bigr).
\]
It follows from \eqref{eq:reverse-smoothed-tail} that
$\vol^{2d}(E_\Omega(r))
=O(r^{{d}+2\beta})=O(r^\alpha)$. We may therefore apply
Proposition \ref{prop:gaussian-to-smoothed-tail}; together with
\eqref{eq:reverse-smoothed-tail}, it gives
\[
  G_\Omega(t)
  =
  (4\pi)^{{d}/2}C_\Omega\Gamma(1-\beta)t^{\alpha/2}
  +o(t^{\alpha/2}).
\]
Set
\[
  U(x):=\vol^{2d}\bigl(E_\Omega(2\sqrt{x})\bigr),
  \qquad x\geq0.
\]
By \eqref{eq:E-as-distribution}, $U$ is the distribution function of
the pushforward of $\nu_\Omega$ under the map $r\mapsto r^2/4$.
Consequently,
\[
  G_\Omega(t)
  =
  \int_{[0,\infty)}e^{-x/t}\,dU(x).
\]
The local form of Karamata's theorem for Laplace--Stieltjes transforms
now implies
\[
  U(x)
  =
  \frac{(4\pi)^{{d}/2}C_\Omega\Gamma(1-\beta)}
       {\Gamma(1+\alpha/2)}
  x^{\alpha/2}
  +o\bigl(x^{\alpha/2}\bigr),
  \qquad x\rightarrow0^+;
\]
see \cite[Theorem 1.7.1]{BGT1987}. Taking $x=r^2/4$, we obtain
\[
  \vol^{2d}\bigl(E_\Omega(r)\bigr)
  =
  2^{s-{d}}\pi^{{d}/2}
  \frac{
    \Gamma\left(1-\frac{{d}-s}{2}\right)
  }{
    \Gamma\left(1+{d}-\frac{s}{2}\right)
  }
  C_\Omega r^\alpha
  +o(r^\alpha),
  \qquad r\rightarrow0^+.
\]
It follows from \eqref{eq:crossing-volume-definition} that
\[
  \mathcal V^s(\partial\Omega)
  =
  \frac{d+1}{\vol(B^{d-1})}\,
  2^{s-{d}}\pi^{{d}/2}
  \frac{
    \Gamma\left(1-\frac{{d}-s}{2}\right)
  }{
    \Gamma\left(1+{d}-\frac{s}{2}\right)
  }
  C_\Omega.
\]
Thus the crossing volume exists, is positive and finite, and the last
identity is equivalent to \eqref{eq:intro-kuznecov-constant}.
}
\end{proof}

\begin{proof}[Completion of the proof of Theorem \ref{thm:kuznecov-two-sided}]
{
We first combine Proposition \ref{prop:crossing-volume} with
Lemma \ref{lem:gaussian-average} to obtain two-sided bounds of order
$t^{\alpha/2}$ for $G_\Omega(t)$. We then use
Proposition \ref{prop:gaussian-to-smoothed-tail} to transfer these estimates
to $H_\Omega(t)$, obtaining two-sided bounds of order $t^\beta$. Next, we
apply Lemma \ref{lem:tail-laplace} and
Corollary \ref{cor:tauberian-comparison} to deduce two-sided bounds of order
$S^{-\beta}$ for $\mathcal T_\Omega(S)$. Finally, setting $S=\lambda^2$ in
\eqref{eq:sharp-tail} completes the proof.
}
\end{proof}

Finally, we prove Corollary \ref{cor:codimension-one-kuznecov} by establishing the
following geometric identity.

\begin{proposition}[Crossing volume in codimension one]
\label{prop:codimension-one-crossing-volume}
Let $\Omega\subset M$ be a domain of finite perimeter. Then
\[
\mathcal V^{d-1}(\partial\Omega)
=
\vol(\partial^*\Omega),
\]
where $\partial^*\Omega$ denotes the reduced boundary.
In particular, if $\Omega$ is a Lipschitz domain, then
\[
\mathcal V^{d-1}(\partial\Omega)=\vol(\partial\Omega).
\]
\end{proposition}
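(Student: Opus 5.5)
We need to show that $\lim_{r\to0}r^{-(d+1)}\vol^{2d}(E_\Omega(r))=\frac{\vol(B^{d-1})}{d+1}\vol(\partial^*\Omega)$. The plan is to express the crossing volume through translations of $\mathbf 1_\Omega$ and use the BV directional variation formula. In local terms we write
\[
\vol^{2d}(E_\Omega(r))=\int_\Omega\vol\bigl(B(x,r)\setminus\Omega\bigr)\,dV_g(x).
\]
Using a finite partition of unity subordinate to small coordinate charts, together with normal coordinates or $\exp_x$, we write $y=\exp_x(\rho\theta)$ with $0<\rho<r$ and $\theta\in S_xM$. Then
\[
\vol^{2d}(E_\Omega(r))=\int_0^r\int_{S^{d-1}}\int_M \mathbf 1_\Omega(x)\,\mathbf 1_{M\setminus\Omega}(\exp_x(\rho\theta))\,J(x,\rho,\theta)\,\rho^{d-1}\,dV_g(x)\,d\theta\,d\rho,
\]
with Jacobian $J=1+O(\rho)$. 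This identity should be carried out on the unit sphere bundle $SM$ with Liouville measure, which avoids charts entirely.

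The key analytic input is the following claim. For the geodesic flow $\phi_\rho$ on $SM$ we set
\[
f(\rho)=\int_{SM}\mathbf 1_\Omega(x)\,\mathbf 1_{M\setminus\Omega}(\pi\phi_\rho(x,\theta))\,d\mu(x,\theta),
\]
and we claim that
\[
f(\rho)=\rho\int_{\partial^*\Omega}\int_{S_x}(\theta\cdot\nu)_+\,d\theta\,d\mathcal H^{d-1}(x)+o(\rho),
\]
where $\nu$ is the outer normal. The inner integral equals $\vol(B^{d-1})$, since $\int_{S^{d-1}}(\theta\cdot e)_+\,d\theta$ is the volume of the projection of the unit ball onto $e^\perp$.

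Given the claim, integrating against $\rho^{d-1}(1+O(\rho))$ over $(0,r)$ gives
\[
\vol^{2d}(E_\Omega(r))=\frac{r^{d+1}}{d+1}\vol(B^{d-1})\vol(\partial^*\Omega)+o(r^{d+1}).
\]
This yields the formula with exactly the normalization in \eqref{eq:crossing-volume-definition}.

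To prove the claim, we first use the fact that for BV functions $\|u\circ\phi_\rho-u\|_{L^1}\le C\rho|Du|(M)$; this provides uniform control. Next, we approximate $\mathbf 1_\Omega$ in the strict BV sense by smooth $u_k$ with $|Du_k|(M)\to P(\Omega)$. We pass to the one-sided quantity using the identity $\int_{\mathbf{}}(u(x)-u(\phi_\rho x))_+\,d\mu$, which for an indicator equals $f(\rho)$. For smooth $u$, we have
\[
\rho^{-1}\int(u-u\circ\phi_\rho)_+\,d\mu\to\int(-\theta\cdot\nabla u)_+\,d\mu,
\]
since the first variation along the geodesic flow is $\theta\cdot\nabla u$. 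By Reshetnyak continuity under strict convergence, the limit converges to
\[
\int_{\partial^*\Omega}\int_{S_x}(\theta\cdot\nu)_+\,d\theta\,d\mathcal H^{d-1},
\]
using De Giorgi's structure theorem $D\mathbf 1_\Omega=-\nu\,\mathcal H^{d-1}\llcorner\partial^*\Omega$.

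The main obstacle is interchanging the limits $\rho\to0$ and $k\to\infty$. For this we plan to use the estimate
\[
\bigl|\rho^{-1}\textstyle\int\bigl[(u-u\circ\phi_\rho)_+-(v-v\circ\phi_\rho)_+\bigr]\bigr|\le\rho^{-1}\|(u-v)-(u-v)\circ\phi_\rho\|_{L^1}\le C|D(u-v)|(M),
\]
but $|D(u-v)|$ does not tend to zero under strict convergence. So we instead prove the upper and lower bounds separately. The $\limsup$ follows from lower semicontinuity plus the global identity $\int(\cdots)_+=\frac12\int|\cdots|$, which holds because $\phi_\rho$ preserves Liouville measure. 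The $\liminf$ follows from localizing to small balls where $\partial^*\Omega$ is close to a half-space by blow-up, as in Beretti--Gennaioli. Alternatively, we can use the known Euclidean result on $\|u(\cdot+\rho\theta)-u\|_{L^1}/\rho\to|D_\theta u|$ for BV $u$ (its proof transfers to flows), giving $\rho^{-1}\int|u-u\circ\phi_\rho|\to\int|\theta\cdot Du|$ direction by direction, and then halve. That is the route I would actually take.

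For the Lipschitz case, we note that $\Omega$ then has finite perimeter and $\mathcal H^{d-1}(\partial\Omega\setminus\partial^*\Omega)=0$.
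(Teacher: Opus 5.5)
Your argument is correct, but it takes a different route from the paper's. The paper symmetrizes, writing $2\vol^{2d}(E_\Omega(r))=\iint \mathbf 1_{\{d_g(x,y)<r\}}|\chi_\Omega(x)-\chi_\Omega(y)|$. It then quotes Dávila's Euclidean $BV$ difference-quotient formula ``localized in normal coordinate charts'', and combines it with $|D\chi_\Omega|(M)=\vol(\partial^*\Omega)$ and $\int_{B^d}|z_1|\,dz=2\vol(B^{d-1})/(d+1)$.

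You instead work intrinsically on $SM$:
\begin{itemize}
\item geodesic polar coordinates give the $\rho$-integral with Jacobian $J=1+O(\rho)$;
\item Liouville invariance of $\phi_\rho$ replaces the paper's symmetrization, via $\int(\cdot)_+=\tfrac12\int|\cdot|$;
\item the constant comes from the isotropy $\int_{S_x}|\theta\cdot v|\,d\theta=2\vol(B^{d-1})|v|$.
\end{itemize}
In effect you reprove Dávila's theorem for the geodesic flow rather than citing it. The paper's route is shorter, but it leaves the chart localization implicit. That step involves the metric distortion of $d_g$ against the chart distance, and a partition of unity applied to a kernel that is nonlinear in $u$. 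Your route avoids charts and is self-contained, at the cost of proving the flow version of the difference-quotient limit.

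Two small corrections to that part. First, you have the two bounds swapped: lower semicontinuity (duality) gives the $\liminf$, not the $\limsup$. The upper bound is actually exact for every $\rho$. For smooth $u_k\to\chi_\Omega$ strictly, Liouville invariance gives $\int_{SM}|u_k\circ\pi\circ\phi_\rho-u_k\circ\pi|\,d\mu\leq\rho\int_{SM}|\theta\cdot\nabla u_k|\,d\mu=2\vol(B^{d-1})\rho\|\nabla u_k\|_{L^1}$. You then let $k\to\infty$; Reshetnyak continuity is unnecessary, since the fibre integrand is isotropic and strict convergence alone suffices.

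Second, for the lower bound, ``direction by direction'' is literal only on a flat torus. On a curved $M$ the geodesic flow rotates $\theta$, so the statement concerns the single geodesic vector field $X$ on $SM$ acting on $\pi^*u$. The argument runs as follows:
\begin{itemize}
\item Pairing with $\Phi\in C^1(SM)$, $|\Phi|\leq1$, gives $\liminf_{\rho\to0}\rho^{-1}\|\pi^*u\circ\phi_\rho-\pi^*u\|_{L^1(SM)}\geq|X\pi^*u|(SM)$.
\item To evaluate the right side you need the fibre identity $\int_{S_x}X\Phi\,d\theta=\operatorname{div}\int_{S_x}\Phi\,\theta\,d\theta$.
\item This identifies $X(\pi^*u)$ as the measure $(\theta\cdot\sigma_u)\,d\theta\otimes|Du|$, where $Du=\sigma_u|Du|$.
\item Its total variation is $2\vol(B^{d-1})|Du|(M)$.
\end{itemize}
With these two adjustments your halving step and the $\rho$-integration give exactly $\frac{\vol(B^{d-1})}{d+1}\vol(\partial^*\Omega)\,r^{d+1}+o(r^{d+1})$, as required.
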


\begin{proof}
Let $\chi_\Omega$ be the indicator function of $\Omega$. By symmetry,
\[
2\vol^{2d}\bigl(E_\Omega(r)\bigr)
=
\int_M\int_M
\mathbf 1_{\{d_g(x,y)<r\}}
\bigl|\chi_\Omega(x)-\chi_\Omega(y)\bigr|
\,dV_g(y)\,dV_g(x).
\]
By Dávila's Euclidean $BV$ difference-quotient formula
\cite{Davila2002}, localized in normal coordinate charts, we obtain
\[
\lim_{r\to0^+}r^{-(d+1)}
\int_M\int_M
\mathbf 1_{\{d_g(x,y)<r\}}
|u(x)-u(y)|\,dV_g(y)\,dV_g(x)
=
\frac{2\vol(B^{d-1})}{d+1}|Du|(M).
\]
Since
\[
\int_{B^d}|z_1|\,dz
=
\frac{2\vol(B^{d-1})}{d+1}
\]
and
\[
|D\chi_\Omega|(M)=\vol(\partial^*\Omega),
\]
we obtain
\[
\lim_{r\to0^+}r^{-(d+1)}
\vol^{2d}\bigl(E_\Omega(r)\bigr)
=
\frac{\vol(B^{d-1})}{d+1}\,
\vol(\partial^*\Omega).
\]
The definition of $\mathcal V^{d-1}$ therefore gives
\[
\mathcal V^{d-1}(\partial\Omega)
=
\vol(\partial^*\Omega).
\]
For a Lipschitz domain, $\partial^*\Omega$ agrees with $\partial\Omega$
up to a set of $(d-1)$-dimensional volume zero, which proves the last
statement.
\end{proof}

\section{Kuznecov asymptotics for self-similar boundaries}
\label{sec:self-similar-crossing-volumes}

In this section, we introduce the precise self-similar framework and
establish the corresponding crossing-volume asymptotics. The non-lattice
case proves Theorem \ref{thm:admissible-self-similar-kuznecov}, while the
lattice case will be used in Section \ref{sec:lattice-case}.

\subsection{Admissible self-similar boundaries}
\label{subsec:admissible-self-similar-boundaries}

An \emph{iterated function system} (IFS) on $\mathbb R^d$ is a finite family
$\{S_1,\ldots,S_N\}$ of contracting similarities,
$S_i(x)=r_iT_ix+p_i$, with $0<r_i<1$ and $T_i\in O(d)$. Its
self-similar attractor is the unique nonempty compact set
$F=\bigcup_{i=1}^N S_iF$.

The IFS is said to satisfy the \emph{open set condition} (OSC) if there
exists a bounded nonempty open set $O$ such that $S_iO\subset O$ for all
$i$ and $S_iO\cap S_jO=\varnothing$ whenever $i\ne j$.
For an admissible local self-similar boundary, we fix such a feasible
open set $O$ which is also  assumed to be connected. If $s$ is determined by $\sum_{i=1}^N r_i^s=1$, then
$s=\dim_{\mathrm H}F$ under the OSC. See
\cite{Hutchinson1981,Falconer2014}.

To use $F$ as a local model for a domain boundary, we impose the following
conditions.

\begin{definition}[Admissible local self-similar boundary]
\label{def:admissible-local-self-similar-boundary}
Under the preceding notation, let $s<d$. We call $F$ an
\emph{admissible local self-similar boundary} of dimension $s$ if:
\begin{enumerate}
\item[(i)] $F\subset\overline O$ and
\[
\overline{\dim}_M(\partial O)<s;
\]

\item[(ii)] {there exist nonempty disjoint open sets
$\Omega^+$ and $\Omega^-$ such that
\[
O\setminus F=\Omega^+\sqcup\Omega^-,
\]
and
\[
F\cap O
=
O\cap\partial\Omega^+
=
O\cap\partial\Omega^-.
\]}

\item[(iii)] for every $i$, the map $S_i$ either preserves or interchanges
the two {sides}:
\[
S_i\Omega^\pm\subset\Omega^\pm
\qquad\text{or}\qquad
S_i\Omega^\pm\subset\Omega^\mp.
\]
\end{enumerate}
The IFS, the feasible open set $O$, and the decomposition
$O\setminus F=\Omega^+\sqcup\Omega^-$ are regarded as part of the
defining data.
\end{definition}

Since the boundary of a nonempty bounded open set in $\mathbb R^d$ has
upper Minkowski dimension at least $d-1$, condition \textup{(i)} implies
$s>d-1$. Thus throughout this self-similar setting,
\[
d-1<s<d.
\]
We now assemble the local models into the boundary of a domain.

\begin{definition}[Admissible self-similar boundary]
\label{def:admissible-self-similar-boundary}
Let $\Omega\subset\mathbb R^d$ be a bounded domain and
$\Gamma:=\partial\Omega$. We call $\Gamma$ an
\emph{admissible self-similar boundary} of dimension $s$ if there exist
finitely many admissible local self-similar boundaries
$F_1,\ldots,F_m$ of dimension $s$, with feasible open sets
$O_1,\ldots,O_m$, such that:
\begin{enumerate}
\item[(i)]
\[
\Gamma=\bigcup_{i=1}^m F_i;
\]

\item[(ii)] the sets $O_i$ are pairwise disjoint and
$F_i\subset\overline{O_i}$ for every $i$;

\item[(iii)] after interchanging the two local {sides} if necessary,
\[
\Omega_i^+=\Omega\cap O_i,
\qquad
\Omega_i^-=(\mathbb R^d\setminus\overline\Omega)\cap O_i.
\]
\end{enumerate}
One such decomposition is fixed throughout the paper.
\end{definition}

{
\begin{remark}[The $(3,\tfrac14)$-von Koch snowflake]
\label{rem:nc-snowflake-admissibility}
We verify the admissibility conditions for the $(3,\tfrac14)$-von Koch
snowflake. Similar verifications apply to a large class of self-avoiding
$(n,c)$-von Koch snowflakes, including lattice examples.

Let $F$ be the generating arc of the $(3,\tfrac14)$-von Koch snowflake,
based on $[0,1]$. By \cite[Theorem 3.1]{KeletiPaquette2010}, $F$ is a
simple arc, and its defining IFS satisfies the open set condition. The
polygon constructed in the proof yields a polygonal feasible open set $O$.
Hence
$\overline{\dim}_M(\partial O)=1<s$, which verifies condition
\textup{(i)} of Definition \ref{def:admissible-local-self-similar-boundary}.

Three rotated copies $F_j$ of $F$ form the Jordan curve $\Gamma$ bounding
the snowflake domain $\Omega$.
Orient $\Gamma$ positively and give each $F_j$ the induced orientation.
For the corresponding feasible open sets $O_j$, we prescribe the local
sides by
\[
\Omega_j^+:=\Omega\cap O_j,
\qquad
\Omega_j^-:=(\mathbb R^2\setminus\overline\Omega)\cap O_j.
\]
These are the interior and exterior sides inherited from $\Gamma$.
Since $\Gamma\cap O_j=F_j\cap O_j$, they partition $O_j\setminus F_j$
and have common relative boundary $F_j\cap O_j$, as required in
condition \textup{(ii)} of
Definition \ref{def:admissible-local-self-similar-boundary}.
Each defining similarity maps the oriented generating arc to its
corresponding subarc. See \cite[p. 125]{KeletiPaquette2010}. Its
orthogonal part preserves or interchanges the left and right sides.
Since the first-level cell contains no other part of $\Gamma$, the
two image sides agree, up to interchange, with the prescribed sides in
that cell. This verifies condition \textup{(iii)} of the same definition.

The three sets $O_j$ are pairwise disjoint and
$\Gamma=\bigcup_{j=1}^3F_j$. The choice of $\Omega_j^\pm$ above also
gives condition \textup{(iii)} of
Definition \ref{def:admissible-self-similar-boundary}. Thus $\Gamma$ is
an admissible self-similar boundary. Finally, its contraction ratios
are $1/4,1/4,3/8,3/8$, and
$\log(1/4)/\log(3/8)\notin\mathbb Q$. Thus $\Gamma$ is non-lattice
and satisfies the hypotheses of
Theorem \ref{thm:admissible-self-similar-kuznecov}.
\end{remark}
}

For the spectral results, we regard these Euclidean domains as domains in
the flat torus $\mathbb T^d$. After a common translation and dilation, the entire configuration may be
placed inside a Euclidean ball of diameter smaller than the injectivity
radius of $\mathbb T^d$, so that Euclidean and flat-torus distances agree
on the configuration. 
The transformation preserves the contraction ratios and dimensions and
affects only the multiplicative constants below. We henceforth identify
the Euclidean configuration with its image in $\mathbb T^d$ and write
$|x-y|$ for both the Euclidean and flat-torus distances on
$\bigcup_iO_i$.

The conditions above also imply that distinct local pieces meet only in
lower-dimensional sets. Indeed, for $i\ne j$,
$F_i\cap F_j\subset\partial O_i\cap\partial O_j$, and hence
\[
\overline{\dim}_M(F_i\cap F_j)<s.
\]

An IFS with contraction ratios $r_1,\ldots,r_N$ is called \emph{lattice}
if there exists $h>0$ such that $-\log r_i\in h\mathbb Z$ for every $i$,
and \emph{non-lattice} otherwise. In the lattice case, the largest such
$h$ is called the \emph{maximal lattice span}. An admissible local
self-similar boundary is called lattice or non-lattice according to its
generating IFS. An admissible self-similar boundary is called lattice,
respectively non-lattice, if all local pieces in its fixed decomposition
are lattice, respectively non-lattice. These notions are unchanged under
the translation and dilation used in the preceding torus embedding.

The main ingredient in the proof of
Theorem \ref{thm:admissible-self-similar-kuznecov} is the following
renewal theorem from \cite[Section 2.3, Renewal Theorem, pp. 197--199]{LevitinVassiliev1996}. 

\begin{theorem}[Renewal theorem]
\label{thm:adapted-renewal}
Let $0<y_1\leq\cdots\leq y_N$, let $c_i>0$ satisfy
$\sum_{i=1}^N c_i=1$, and set
\[
  \eta:=\sum_{i=1}^N c_i y_i.
\]
Suppose that $z:\mathbb R\to\mathbb R$ is piecewise continuous, that its
points of discontinuity have no finite accumulation point, and that
\[
  |z(t)|\leq Ce^{-\tau|t|},
  \qquad t\in\mathbb R,
\]
for some $C,\tau>0$. Then the renewal equation
\[
  Z(t)=\sum_{i=1}^N c_iZ(t-y_i)+z(t),
  \qquad t\in\mathbb R,
\]
has a unique solution satisfying $Z(t)\to0$ as $t\to-\infty$, and this
solution is bounded on $\mathbb R$.

If $\{y_1,\ldots,y_N\}$ is non-arithmetic, then
\[
  Z(t)
  =
  \frac1\eta\int_{\mathbb R}z(u)\,du+o(1),
  \qquad t\to\infty.
\]
If $\{y_1,\ldots,y_N\}$ is arithmetic with maximal span $h>0$, then
\begin{equation}
\label{eq:arithmetic-renewal-profile}
  Z(t)=p(t)+o(1),
  \qquad t\to\infty,
\end{equation}
where
\begin{equation}
\label{eq:arithmetic-renewal-periodization}
  p(t):=
  \frac{h}{\eta}
  \sum_{\ell\in\mathbb Z}z(t-\ell h)
\end{equation}
is a bounded $h$-periodic function. If $z$ is left-continuous,
right-continuous, or continuous, then $Z$ and, in the arithmetic case,
$p$ have the same corresponding continuity property.
\end{theorem}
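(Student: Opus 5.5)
The plan is to treat the equation as a two-sided renewal equation driven by the finite measure $\mu:=\sum_{i=1}^N c_i\delta_{y_i}$ on $(0,\infty)$, and to solve it explicitly with the renewal measure
\[
U:=\sum_{n=0}^\infty \mu^{*n},
\qquad
Z(t):=(U*z)(t)=\sum_{n=0}^{\infty}\int z(t-u)\,d\mu^{*n}(u).
\]
The equation reads $Z=\mu*Z+z$, so formally $Z=U*z$. Everything then reduces to the classical key renewal theorem (Feller's Blackwell theorem and its key form), applied to $z$ on the whole line.

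\emph{Existence, boundedness, and uniqueness.} Since $\mu^{*n}$ is a probability measure supported in $[ny_1,ny_N]$, the $n$th term is bounded by $\sup_{u\geq ny_1}Ce^{-\tau|t-u|}$. For fixed $t$ this is at most $Ce^{-\tau(ny_1-t)}$ once $ny_1>t$, so the series converges locally uniformly in $t$. For a uniform bound, use the standard estimate $\sup_a U([a,a+1])\leq K<\infty$, which follows from $y_1>0$: the walk has at least one step of size $y_1$ per unit count, so only $O(1)$ indices $n$ give mass in a unit window on average. Combined with $|z|\leq Ce^{-\tau|\cdot|}$, this gives $|Z(t)|\leq K\sum_{k\in\mathbb Z}Ce^{-\tau(|k|-1)}$. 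As $t\to-\infty$, all $u$ in the support of $U$ are nonnegative, so $|z(t-u)|\leq Ce^{-\tau(u-t)}$ and $Z(t)\to0$ by dominated convergence. For uniqueness, let $W$ be the difference of two solutions tending to $0$ at $-\infty$; then $W=\mu*W$. With $m(T):=\sup_{(-\infty,T]}|W|$ (finite, since $W$ is locally bounded and tends to $0$), we get $m(T)\leq m(T-y_1)$. Iterating gives $m(T)\leq m(T-ny_1)\to0$, so $W\equiv0$.

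\emph{Asymptotics.} I will check that $z$ is directly Riemann integrable. The exponential bound makes $\sum_k\sup_{[k\delta,(k+1)\delta]}|z|$ finite. Piecewise continuity with discontinuities having no finite accumulation point gives, on each compact interval, finitely many jumps, so the upper and lower $\delta$-Riemann sums converge to each other on compacts. The exponential tails make the contribution outside $[-R,R]$ uniformly small. In the non-arithmetic case, the key renewal theorem (Blackwell: $U([t,t+a])\to a/\eta$) yields $U*z(t)\to\eta^{-1}\int z$. In the arithmetic case, $U$ lives on $h\mathbb Z$ because $\mu$ does, and Blackwell gives $U(\{\ell h\})\to h/\eta$. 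Hence
\[
Z(t)-\frac h\eta\sum_{\ell}z(t-\ell h)=\sum_{\ell}\Bigl(U(\{\ell h\})-\frac h\eta\Bigr)z(t-\ell h),
\]
where $U(\{\ell h\}):=0$ for $\ell<0$. This tends to $0$ by dominated convergence, using the summable bound $\sup_{t}\sum_\ell|z(t-\ell h)|$ and the fact that for each fixed window only large $\ell$ matter as $t\to\infty$. The periodization $p$ is clearly $h$-periodic and bounded by the same sum.

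\emph{Continuity properties.} Both $Z$ and $p$ are locally uniformly convergent series of translates of $z$ with nonnegative weights. Therefore left-, right-, or full continuity of $z$ passes to each partial sum and hence to the limit. The main obstacle I expect is the bookkeeping in the uniform local bound on $U$ and the verification of direct Riemann integrability on the whole line rather than a half-line. For this I would first reduce to $z\geq0$ by splitting $z=z^+-z^-$, each of which inherits the exponential bound and the discontinuity hypothesis, and then apply the one-sided key renewal theorem on $[-R,\infty)$, absorbing $(-\infty,-R)$ by the exponential decay.
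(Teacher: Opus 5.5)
Your proposal is correct. The paper gives no argument of its own for this statement: it quotes the renewal theorem of Levitin--Vassiliev \cite[Section 2.3]{LevitinVassiliev1996}, so there is no internal proof to compare line by line.

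Your route is a self-contained derivation from the classical one-sided renewal theory. You write the solution as $Z=U*z$ with $U=\sum_{n\geq0}\mu^{*n}$ and $\mu=\sum_i c_i\delta_{y_i}$. Uniqueness comes from the contraction $m(T)\leq m(T-y_1)$. The asymptotics come from Blackwell's theorem and the key renewal theorem, once you check direct Riemann integrability and truncate the left tail of $z$.

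Compared with the bare citation, this makes every clause of the statement explicit. The series representation gives boundedness and the decay $Z(t)=O(e^{\tau t})$ as $t\to-\infty$. It gives the periodization formula in the arithmetic case through $U(\{\ell h\})\to h/\eta$. It also gives the transfer of left-, right-, or full continuity, because each term is a finite combination of translates of $z$ and the series converges uniformly on $(-\infty,T]$. The citation is shorter but leaves these points to the reader.

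Two small points should be tightened.
\begin{itemize}
\item In the uniqueness step, $W$ is not assumed locally bounded. Finiteness of $m(T)=\sup_{(-\infty,T]}|W|$ has to come from the equation itself. Choose $T_0$ with $|W|\leq 1$ on $(-\infty,T_0]$, then step forward in increments of $y_1$ using $|W(t)|\leq\sum_i c_i|W(t-y_i)|$.
\item The uniform window bound should be stated pathwise rather than ``on average''. Every step of the associated random walk is at least $y_1$, so each trajectory visits $[a,a+1]$ at most $1+1/y_1$ times. Hence $U([a,a+1])\leq 1+1/y_1$ for all $a\in\mathbb R$.
\end{itemize}
With these two fixes the argument is complete.
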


In the application below, after relabeling if necessary, we take
$y_i=-\log r_i$ and $c_i=r_i^s$.
The non-lattice conclusion will be used to prove
Theorem \ref{thm:admissible-self-similar-kuznecov}, while the lattice
conclusion will be used in Section \ref{sec:lattice-case}. We first
derive the required renewal equation from the local two-body set of an
admissible local self-similar boundary and estimate its forcing term
$\varphi_{2d}$.

\subsection{A local renewal equation}
\label{subsec:local-renewal}
We retain the notation introduced above. Let $F$ be an
\emph{admissible local self-similar boundary} of dimension $s$, generated
by an IFS $\{S_1,\ldots,S_N\}$ with contraction ratios
$r_1,\ldots,r_N$, and let $O$ be its fixed feasible open set. Write
$O\setminus F=\Omega^+\sqcup\Omega^-$, and define
\[
E_O^{\Omega^+}(r)
:=
\bigl\{(x,y)\in\Omega^+\times\Omega^-:|x-y|<r\bigr\}.
\]
Set $f(r):=\vol^{2d}\bigl(E_O^{\Omega^+}(r)\bigr)$. For an open set
$D\subset\mathbb T^d$, we also define
\[
E_{\mathbb T^d}^{D}(r)
:=
\bigl\{(x,y)\in
D\times(\mathbb T^d\setminus D):|x-y|<r\bigr\}.
\]

\begin{proposition}[Local renewal equation]
\label{thm:local-renewal}
Let $F$ be an {admissible local self-similar boundary} of dimension $s$, generated
by an IFS with contraction ratios $r_1,\ldots,r_N$, and let
$O$, $\Omega^+$, and $\Omega^-$ be as above. Set
$f(r)=\vol^{2d}\bigl(E_O^{\Omega^+}(r)\bigr)
=\vol^{2d}\bigl(E_O^{\Omega^-}(r)\bigr)$. Then there exist
$\gamma>0$, $C>0$, and a
nonnegative bounded function
$\varphi_{2d}:(0,\infty)\to[0,\infty)$, with a discrete set of
discontinuities, such that, for every $r>0$,
\begin{equation}
\label{eq:local-renewal}
f(r)
=
\sum_{i=1}^N r_i^{2d}
\mathbf 1_{(0,r_i]}(r)
f\left(\frac r{r_i}\right)
+\varphi_{2d}(r),
\end{equation}
and
\[
0\leq\varphi_{2d}(r)\leq Cr^{2d-s+\gamma},
\qquad r>0.
\]
\end{proposition}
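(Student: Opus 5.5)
We decompose $\Omega^+\times\Omega^-$ along the first-level cells $S_iO$ and use the self-similarity of each cell. We define $\varphi_{2d}$ as the difference in \eqref{eq:local-renewal} and then show it is small.

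\textbf{Step 1: cell decomposition.} By the OSC the cells $S_iO$ are pairwise disjoint open subsets of $O$. Condition (iii) gives $S_i\Omega^\pm=\Omega^\pm\cap S_iO$ or $S_i\Omega^\pm=\Omega^\mp\cap S_iO$. To get equality rather than inclusion, note that $S_iO\setminus S_iF=S_i\Omega^+\sqcup S_i\Omega^-$. Also $F\cap S_iO=S_iF\cap S_iO$: points of $S_jF$ with $j\neq i$ lie in $\overline{S_jO}$, which misses the open set $S_iO$. Hence $S_iO\setminus F=S_iO\setminus S_iF$, and the two decompositions coincide.

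Since $S_i$ scales distances by $r_i$ and volumes in $\mathbb R^{2d}$ by $r_i^{2d}$, we get
\[
\vol^{2d}\bigl\{(x,y)\in(\Omega^+\cap S_iO)\times(\Omega^-\cap S_iO):|x-y|<r\bigr\}=r_i^{2d}f(r/r_i).
\]
This uses $f=\vol^{2d}(E_O^{\Omega^-})$, which holds by the symmetry $(x,y)\mapsto(y,x)$ in case the sides are interchanged. The factor $\mathbf 1_{(0,r_i]}(r)$ is a harmless cutoff. For $r>r_i$ we put $r_i^{2d}f(r/r_i)$ into $\varphi_{2d}$; it is bounded by $r_i^{2d}\vol(O)^2$, which is $\le C r^{2d-s+\gamma}$ there, since $r$ is bounded below on this range.

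We then set $\varphi_{2d}:=f-\sum_i r_i^{2d}\mathbf 1_{(0,r_i]}f(\cdot/r_i)$. It is nonnegative, because the diagonal cell pieces are disjoint subsets of $E_O^{\Omega^+}(r)$. It is bounded, since $f\le\vol(O)^2$ and the subtracted terms are bounded as well. Its discontinuities lie in the finite set $\{r_i\}$, because $f$ is continuous (distance spheres are null, as in Section \ref{sec:heat-tauberian}).

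\textbf{Step 2: the remainder consists of crossing pairs.} For $r\le\min r_i$, $\varphi_{2d}(r)$ is the measure of pairs $(x,y)\in\Omega^+\times\Omega^-$ with $|x-y|<r$ for which $x$ and $y$ are not both in the same cell. Up to null sets, $O\setminus\bigcup_iS_iO$ is covered by $\partial(S_iO)$ and $O\setminus\overline{\bigcup S_iO}$. In each such pair, either $x$ and $y$ lie in different cells, or one of them lies outside all cells.

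\emph{Case of a point outside the cells.} Say $y\notin\bigcup_iS_iO$. Since $x\in\Omega^+$, $y\in\Omega^-$ and $|x-y|<r$, the segment $[x,y]$ meets $F$ (connected segment, disjoint open sides covering $O\setminus F$). If the segment stays in $O$, then both points are within $r$ of $F\cup\partial O$. In all cases, at least one of $x,y$ is within $r$ of a point where $F$ meets $\overline{O}\setminus\bigcup_iS_iO$, or within $r$ of $\partial O$. Since $F=\bigcup S_iF\subset\bigcup\overline{S_iO}$, such points of $F$ lie on $\bigcup_i\partial(S_iO)$.

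\emph{Case of different cells.} If $x\in S_iO$ and $y\in S_jO$ with $i\neq j$, the segment crosses $\partial(S_iO)$.

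In both cases one endpoint lies in the $r$-neighbourhood $K_r$ of $K:=\partial O\cup\bigcup_i S_i(\partial O)$, and then the other lies in $B(\cdot,r)$. So
\[
\varphi_{2d}(r)\le C r^d\vol(K_r).
\]
By condition (i), $\overline{\dim}_M K\le\overline{\dim}_M\partial O<s$. Choosing $\gamma>0$ with $\overline{\dim}_M\partial O<s-\gamma$, we get $\vol(K_r)\le Cr^{d-s+\gamma}$ for small $r$, as in Remark \ref{rem:minkowski-upper-bound}. Hence $\varphi_{2d}(r)\le Cr^{2d-s+\gamma}$ for small $r$. For large $r$ the same bound follows from boundedness of $\varphi_{2d}$, after enlarging $C$.

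\textbf{Main obstacle.} The hardest part is the geometric bookkeeping in Step 2. First, one must justify that a segment joining opposite sides must pass near $\partial O$ or a cell boundary; segments may exit $O$, and $O$ need not be convex. Here it is enough that if the segment leaves $O$, it crosses $\partial O$ within distance $r$ of both endpoints. Second, one must confirm the exact side identification in Step 1, which relies on condition (ii) together with the OSC.
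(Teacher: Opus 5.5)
Your argument follows the paper's proof. The OSC gives $F\cap S_iO=S_i(F\cap O)$, which identifies $S_i\Omega^\pm$ with $\Omega^\pm\cap S_iO$ up to interchange. Self-similarity then produces the terms $r_i^{2d}f(r/r_i)$. For $r<\min_i r_i$ the remainder is the set of opposite-side pairs not lying in a common cell, which you bound by $Cr^d\vol(K_r)$ with $K=\partial O\cup\bigcup_iS_i(\partial O)$. Step 1, the nonnegativity, boundedness and discontinuity claims, and the ``different cells'' case are all fine.

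The one step that does not hold up as written is the ``point outside the cells'' case. After observing that $[x,y]$ meets $F\cup\partial O$, you assert that one endpoint is within $r$ of $\partial O$ or of a point of $F\cap(\overline O\setminus\bigcup_iS_iO)$. But the point $w$ where the segment meets $F$ may lie inside a cell $S_jO$: the segment can enter a cell, cross $F$ there, and leave. Then $w\notin\bigcup_i\partial(S_iO)$, and your argument gives no point of $F\cap\bigcup_i\partial(S_iO)$ near either endpoint.

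The fix is one line and still lands in $K_r$. Since $y\notin S_jO$, the subsegment $[w,y]$ meets $\partial(S_jO)=S_j(\partial O)$ within distance $r$ of $y$.

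Alternatively, organize the remainder as the paper does. If one coordinate lies in a cell $S_iO$ and the other does not, the segment crosses $\partial(S_iO)$, so the pair lies in $E^{S_iO}_{\mathbb T^d}(r)$. If both lie, up to null sets, in $G=O\setminus\bigcup_i\overline{S_iO}$, then $F\cap G=\varnothing$ forces each component of $G$ into a single side. Hence the connected ball $B(x,r)\ni y$ cannot lie in $G$, so $x\in(\partial G)_r$ with $\partial G\subset K$.

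With either repair, your bound $\varphi_{2d}(r)\le Cr^d\vol(K_r)$ and the Minkowski-dimension estimate go through unchanged.
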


\begin{proof}
Define $\varphi_{2d}(r):=
f(r)-\sum_{i=1}^N r_i^{2d}\mathbf 1_{(0,r_i]}(r)
f(r/r_i)$.
Then \eqref{eq:local-renewal} holds by definition. We first verify the
condition on the discontinuities of $\varphi_{2d}$. For every $r>0$,
the level set $\{(x,y):|x-y|=r\}$ has zero $2d$-dimensional Lebesgue
measure. Dominated convergence applied to the integral defining $f$
therefore shows that $f$ is continuous on $(0,\infty)$. It follows
that the only possible discontinuities of $\varphi_{2d}$ occur at the
finitely many points $r_1,\ldots,r_N$. In particular, its set of
discontinuities is discrete.

Since $O$ is bounded, $f$ is bounded, and hence so is
$\varphi_{2d}$. Once $\gamma>0$ has been chosen below, we have
$2d-s+\gamma>0$. The desired estimate for
$|\varphi_{2d}(r)|$ is therefore automatic for $r\geq r_{\min}$, where
$r_{\min}:=\min_{1\leq i\leq N}r_i$. It remains to establish the
estimate for $0<r<r_{\min}$; in this range all the cutoff functions in
the definition of $\varphi_{2d}$ are equal to one.

We now derive the untruncated renewal identity needed for this estimate.

To identify the two sides inside each first-level cell, observe first that
the open set condition implies
$S_iO\cap\overline{S_jO}=\varnothing$ whenever $i\ne j$. Since
$S_jF\subset\overline{S_jO}$, it follows that
$F\cap S_iO=S_i(F\cap O)$. {Therefore
\[
S_iO\setminus F=S_i\Omega^+\sqcup S_i\Omega^-.
\]
The side-compatibility condition places these two sets in opposite
prescribed sides of $O\setminus F$.} Thus, for each
$i$, we may choose
$\varepsilon_i\in\{+,-\}$ so that
$S_i\Omega^{\varepsilon_i}=S_iO\cap\Omega^+$ and
$S_i\Omega^{-\varepsilon_i}=S_iO\cap\Omega^-$, where
$-\varepsilon_i$ denotes the sign opposite to $\varepsilon_i$.

For the remainder of the proof, set
\[
\Omega_i^+:=S_i\Omega^{\varepsilon_i}=S_iO\cap\Omega^+,
\qquad
\Omega_i^-:=S_i\Omega^{-\varepsilon_i}=S_iO\cap\Omega^-.
\]
All subsequent set decompositions use these ordered local sides, and we shall
not distinguish the cases $\varepsilon_i=+$ and $\varepsilon_i=-$.
For each $i$, define
\[
E_{S_iO}^{\Omega_i^+}(r)
:=
\bigl\{(x,y)\in\Omega_i^+\times\Omega_i^-:|x-y|<r\bigr\}.
\]
We decompose $E_O^{\Omega^+}(r)$ into the pairs whose two coordinates lie in the
same first-level cell and the remaining pairs:
\[
E_O^{\Omega^+}(r)
=
\bigsqcup_{i=1}^N E_{S_iO}^{\Omega_i^+}(r)
\sqcup B(r),
\]
where $B(r)$ consists of the pairs in $E_O^{\Omega^+}(r)$ that belong to none of
the sets $E_{S_iO}^{\Omega_i^+}(r)$. Taking product volumes gives
\[
f(r)
=
\sum_{i=1}^N\vol^{2d}\bigl(E_{S_iO}^{\Omega_i^+}(r)\bigr)
+\vol^{2d}\bigl(B(r)\bigr).
\]
The map $S_i\times S_i$ scales $\vol^{2d}$ by
$r_i^{2d}$ and distances by $r_i$. According to the ordering of the
local sides, it maps one of the two sets
$E_O^{\Omega^+}(r/r_i)$ and $E_O^{\Omega^-}(r/r_i)$ onto
$E_{S_iO}^{\Omega_i^+}(r)$. Since the two source sets have the same
product volume,
\[
\vol^{2d}\bigl(E_{S_iO}^{\Omega_i^+}(r)\bigr)
=r_i^{2d}f(r/r_i),
\]
and consequently
\[
f(r)
=
\sum_{i=1}^N r_i^{2d}f\left(\frac r{r_i}\right)
+\vol^{2d}\bigl(B(r)\bigr).
\]
Comparing this identity with the definition of $\varphi_{2d}$, we obtain
\[
\varphi_{2d}(r)
=
\vol^{2d}\bigl(B(r)\bigr)
+
\sum_{i=1}^N r_i^{2d}
\mathbf 1_{(r_i,\infty)}(r)
f\left(\frac r{r_i}\right).
\]
Thus $\varphi_{2d}(r)\geq0$ for every $r>0$, while
$\varphi_{2d}(r)=\vol^{2d}\bigl(B(r)\bigr)$ whenever
$0<r<r_{\min}$.
It remains to prove that, for some $\gamma>0$,
\[
\vol^{2d}\bigl(B(r)\bigr)=O\bigl(r^{2d-s+\gamma}\bigr),
\qquad r\rightarrow 0^+.
\]

We now refine the decomposition of $B(r)$. Set
$H=\bigcup_{i=1}^N\overline{S_iO}$ and $G=O\setminus H$, and define
\[
  B_0(r):=\{(x,y)\in B(r):x,y\in G\},
\]
$B_1(r):=\{(x,y)\in B(r):x\in H\}$ and
$B_2(r):=\{(x,y)\in B(r):y\in H\}$.
Then
\[
  B(r)=B_0(r)\cup B_1(r)\cup B_2(r),
\]
where the union need not be disjoint. We first estimate $B_0(r)$.
Since
\[
  F=\bigcup_{i=1}^N S_iF
  \subset
  \bigcup_{i=1}^N\overline{S_iO}
  =H,
\]
we have $F\cap G=\varnothing$, and every connected component of $G$
is contained in either $\Omega^+$ or $\Omega^-$. If
$(x,y)\in B_0(r)$, then $x\in\Omega^+$ and
$y\in\Omega^-$; hence $x$ and $y$ lie in different connected
components of $G$. If $\operatorname{dist}(x,\partial G)\geq r$,
then the ball $B_{\mathbb T^d}(x,r)$ is contained in $G$. Since
$|x-y|<r$, this ball contains both $x$ and $y$, contradicting the
preceding observation. Consequently,
\[
  B_0(r)
  \subset
  \bigl\{(x,y):x\in(\partial G)_r\cap G,\ |x-y|<r\bigr\},
\]
and therefore
\begin{equation}
\label{eq:B-zero-boundary-bound}
  \vol^{2d}\bigl(B_0(r)\bigr)
  \leq C r^d \vol\bigl((\partial G)_r\bigr).
\end{equation}

For each $i$, let
$B_{1,i}(r):=\{(x,y)\in B_1(r):x\in\overline{S_iO}\}$ and
$B_{2,i}(r):=\{(x,y)\in B_2(r):y\in\overline{S_iO}\}$.
Then $B_1(r)=\bigcup_{i=1}^N B_{1,i}(r)$ and
$B_2(r)=\bigcup_{i=1}^N B_{2,i}(r)$.

We first estimate $B_{1,i}(r)$. Define
$\Sigma_i(r)=\{(x,y)\in B_{1,i}(r):x\in\partial(S_iO)\}$.
Since $\partial(S_iO)=S_i(\partial O)$ and
$\overline{\dim}_M(\partial O)<s<d$, the set $\partial(S_iO)$ has zero
Lebesgue measure. Therefore $\vol^{2d}\bigl(\Sigma_i(r)\bigr)=0$. If
$(x,y)\in B_{1,i}(r)\setminus\Sigma_i(r)$, then $x\in S_iO$. Moreover,
$y\notin S_iO$; otherwise both coordinates would lie in the same
first-level cell, and the pair would belong to
$E_{S_iO}^{\Omega_i^+}(r)$,
contrary to the definition of $B(r)$. Consequently,
$B_{1,i}(r)\subset
E_{\mathbb T^d}^{S_iO}(r)\cup\Sigma_i(r)$, where we have used
$O\setminus S_iO\subset\mathbb T^d\setminus S_iO$. Hence
$\vol^{2d}\bigl(B_{1,i}(r)\bigr)
\leq\vol^{2d}\bigl(E_{\mathbb T^d}^{S_iO}(r)\bigr)$, and summing over $i$
gives
$\vol^{2d}\bigl(B_1(r)\bigr)
\leq\sum_{i=1}^N\vol^{2d}\bigl(E_{\mathbb T^d}^{S_iO}(r)\bigr)$.
Interchanging the two coordinates and applying the same argument to
$B_{2,i}(r)$, we obtain
$\vol^{2d}\bigl(B_2(r)\bigr)
\leq\sum_{i=1}^N\vol^{2d}\bigl(E_{\mathbb T^d}^{S_iO}(r)\bigr)$.

Choose $q$ such that
$\overline{\dim}_M(\partial O)<q<s$, and set $\gamma=s-q>0$. We then
have
\[
  \partial G
  \subset
  \partial O\cup\partial H
  \subset
  \partial O\cup\bigcup_{i=1}^N\partial(S_iO).
\]
Since $\partial(S_iO)=S_i(\partial O)$, it follows that
$\overline{\dim}_M(\partial G)<q$, and hence
$\vol\bigl((\partial G)_r\bigr)=O(r^{d-q})$ as $r\rightarrow0^+$. Thus
\eqref{eq:B-zero-boundary-bound} gives
\[
  \vol^{2d}\bigl(B_0(r)\bigr)
  =O(r^{2d-q})
  =O(r^{2d-s+\gamma}).
\]
Moreover,
$\overline{\dim}_M(\partial(S_iO))
=\overline{\dim}_M(\partial O)<q$, and
Remark \ref{rem:minkowski-upper-bound} gives
$\vol^{2d}\bigl(E_{\mathbb T^d}^{S_iO}(r)\bigr)
=O(r^{2d-q})=O(r^{2d-s+\gamma})$ as $r\rightarrow 0^+$.
Since
\begin{multline}
    \vol^{2d}\bigl(B(r)\bigr)
  \leq
  \vol^{2d}\bigl(B_0(r)\bigr)
  +\vol^{2d}\bigl(B_1(r)\bigr)
  +\vol^{2d}\bigl(B_2(r)\bigr)
 \\ \leq
  \vol^{2d}\bigl(B_0(r)\bigr)
  +2\sum_{i=1}^N
  \vol^{2d}\bigl(E_{\mathbb T^d}^{S_iO}(r)\bigr),
\end{multline}
and the number of first-level cells is finite, the preceding estimates
yield $\vol^{2d}\bigl(B(r)\bigr)=O(r^{2d-s+\gamma})$ as
$r\rightarrow 0^+$.
Since $\varphi_{2d}(r)=\vol^{2d}\bigl(B(r)\bigr)$ for
$0<r<r_{\min}$, this proves the
required estimate at small scales. Together with the boundedness argument
at the beginning of the proof, it follows, after enlarging the constant,
that
\[
0\leq\varphi_{2d}(r)\leq Cr^{2d-s+\gamma},
\qquad r>0.
\]
\end{proof}

\subsection{The local two-body asymptotic}
\label{subsec:local-asymptotic}

The local renewal equation established above places the common volume
$f(r)=\vol^{2d}\bigl(E_O^{\Omega^+}(r)\bigr)
=\vol^{2d}\bigl(E_O^{\Omega^-}(r)\bigr)$ precisely in the setting of
Theorem \ref{thm:adapted-renewal}. We record the resulting asymptotic
separately, since it will be used in the subsequent gluing argument.

\begin{corollary}[Local two-body asymptotics]
\label{cor:local-two-body-asymptotics}
Let $F$ be an {admissible local self-similar boundary} of dimension $s$, with
feasible open set $O$.

If $F$ is non-lattice, then there exists a constant $C_F>0$ such that
\begin{equation}
\label{eq:local-two-body-asymptotic}
\vol^{2d}\bigl(E_O^{\Omega^+}(r)\bigr)
=
\vol^{2d}\bigl(E_O^{\Omega^-}(r)\bigr)
=
C_Fr^{2d-s}
+o\bigl(r^{2d-s}\bigr),
\qquad r\rightarrow 0^+.
\end{equation}

If $F$ is lattice and $h>0$ is the maximal lattice span of its IFS,
then there exists a measurable, multiplicatively $e^h$-periodic function
$\Pi_F:(0,\infty)\to(0,\infty)$, bounded above and below by positive
constants, such that
\begin{equation}
\label{eq:local-lattice-two-body-asymptotic}
  \vol^{2d}\bigl(E_O^{\Omega^+}(r)\bigr)
  =
  \vol^{2d}\bigl(E_O^{\Omega^-}(r)\bigr)
  =
  r^{2d-s}\Pi_F(r)
  +o\bigl(r^{2d-s}\bigr),
  \qquad r\rightarrow0^+.
\end{equation}
\end{corollary}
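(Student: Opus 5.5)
The plan is to convert the local renewal equation of Proposition \ref{thm:local-renewal} into the additive renewal equation of Theorem \ref{thm:adapted-renewal} by a logarithmic change of variables. Set $t=-\log r$ and define
\[
Z(t):=e^{(2d-s)t}f(e^{-t}),\qquad z(t):=e^{(2d-s)t}\varphi_{2d}(e^{-t}).
\]
Multiply \eqref{eq:local-renewal} by $r^{s-2d}$ and write $y_i=-\log r_i$. Then $r_i^{2d}(r/r_i)^{2d-s}r^{s-2d}=r_i^{s}$, so with $c_i=r_i^s$ (and $\sum c_i=1$ by the choice of $s$) we obtain
\[
Z(t)=\sum_{i=1}^N c_iZ(t-y_i)\mathbf 1_{[y_i,\infty)}(t)+z(t).
\]
To remove the truncation, I would extend $Z$ by setting $Z(t):=0$ for $t<0$, noting that the indicator $\mathbf 1_{(0,r_i]}(r)$ corresponds exactly to $t-y_i\geq 0$. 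For $t<0$ the renewal equation then holds only if $z$ is redefined there. So I would instead define $z(t):=Z(t)-\sum_i c_iZ(t-y_i)$ for all $t\in\mathbb R$; this $z$ agrees with the above for $t\ge0$ and vanishes for $t<0$.

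Next I check the hypotheses of Theorem \ref{thm:adapted-renewal}. For $t\ge0$, Proposition \ref{thm:local-renewal} gives $0\le z(t)\le Ce^{(2d-s)t}e^{-(2d-s+\gamma)t}=Ce^{-\gamma t}$. For $t<0$ we have $z\equiv0$. Hence $|z(t)|\le Ce^{-\tau|t|}$ with $\tau=\gamma$. Piecewise continuity with no finite accumulation of discontinuities follows because $\varphi_{2d}$ has discontinuities only at the finitely many points $r_i$, plus possibly $t=0$ from the extension. The solution $Z$ is bounded and vanishes as $t\to-\infty$, so by uniqueness it is the renewal solution. In the non-lattice case, $\{y_i\}$ is non-arithmetic, and we get $Z(t)\to\eta^{-1}\int_{\mathbb R}z$. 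Setting $C_F:=\eta^{-1}\int z$ yields \eqref{eq:local-two-body-asymptotic}. In the lattice case, we get $Z(t)=p(t)+o(1)$ with $p$ being $h$-periodic, and we set $\Pi_F(r):=p(-\log r)$, which is multiplicatively $e^h$-periodic. Measurability and boundedness above follow from \eqref{eq:arithmetic-renewal-periodization} together with the exponential decay of $z$. The equality of the $\Omega^+$ and $\Omega^-$ volumes is already part of Proposition \ref{thm:local-renewal}, by the symmetry $(x,y)\mapsto(y,x)$.

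The main obstacle is positivity: $C_F>0$, and $\Pi_F$ bounded below by a positive constant. For $C_F$ it suffices that $z\ge0$ and $z\not\equiv0$. This holds because $f(r)>0$ for every $r>0$: $F\cap O\neq\varnothing$ is the common relative boundary of $\Omega^\pm$, so near a point of $F\cap O$ there are points of both sides within distance $r$, and small balls around them lie in $\Omega^\pm$. For $r\ge r_{\max}:=\max_i r_i$ we have $\varphi_{2d}(r)=f(r)>0$, so $z>0$ on $t\le -\log r_{\max}$, $t\ge0$, which gives $\int z>0$.

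For the lattice lower bound, pointwise positivity of $p$ is more delicate, because $p(t)=\frac h\eta\sum_\ell z(t-\ell h)$ requires $z>0$ on a set meeting every residue class mod $h$. Since $h\le y_i\le-\log r_{\min}$, I would show that $z(t)=e^{(2d-s)t}f(e^{-t})>0$ on the interval $t\in[0,-\log r_{\max}]$, using that $\varphi_{2d}(r)=f(r)$ for $r>r_{\max}$. This interval has length $y_{\min}\ge h$, so it covers a full period. Continuity of $f$ then gives a uniform positive lower bound on a compact subinterval; if this misses an endpoint, I would slightly enlarge the interval or use right-continuity. This yields $\inf p>0$.
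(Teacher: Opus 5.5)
Your proposal is correct and follows the paper's proof essentially step for step. It uses the same logarithmic substitution with $Z$ and $z$ truncated to vanish for $t<0$, the same application of the renewal theorem with $c_i=r_i^s$ and $y_i=-\log r_i$, and the same positivity argument via $\varphi_{2d}=f>0$ on $(r_{\max},1]$, that is, on the fundamental interval $t\in[0,h)\subset[0,y_{\min})$. The one slip is at the endpoint: since the indicator $\mathbf 1_{(0,r_i]}$ contains $r_i$, in general $\varphi_{2d}(r_{\max})\neq f(r_{\max})$, so you should use the strict inequalities $r>r_{\max}$ and $t<y_{\min}$. Then bound $z$ below on $[0,h)$ by the minimum of the continuous positive function $e^{(2d-s)t}f(e^{-t})$ over $[0,h]$, exactly as the paper does.
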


\begin{proof}

After relabeling the IFS if necessary, set
$y_i:=-\log r_i$, $c_i:=r_i^s$, and
\[
  \eta_F:=\sum_{i=1}^N c_i y_i
  =-\sum_{i=1}^N r_i^s\log r_i.
\]
Since $\sum_i r_i^s=1$, the numbers $c_i$ satisfy the normalization in
Theorem \ref{thm:adapted-renewal}. Define
\[
  Z(t):=
  \begin{cases}
    e^{(2d-s)t}f(e^{-t}),&t\geq0,\\
    0,&t<0,
  \end{cases}
  \qquad
  z(t):=
  \begin{cases}
    e^{(2d-s)t}\varphi_{2d}(e^{-t}),&t\geq0,\\
    0,&t<0.
  \end{cases}
\]
After multiplying \eqref{eq:local-renewal} by $e^{(2d-s)t}$ and setting
$r=e^{-t}$, we obtain
\[
  Z(t)=\sum_{i=1}^N c_iZ(t-y_i)+z(t),
  \qquad t\in\mathbb R.
\]
Proposition \ref{thm:local-renewal} shows that $z$ is piecewise
continuous, its discontinuities have no finite accumulation point, and
$|z(t)|\leq Ce^{-\gamma|t|}$ for some $\gamma>0$. Moreover,
$Z(t)\to0$ as $t\to-\infty$. Thus Theorem
\ref{thm:adapted-renewal} applies.

Suppose first that $F$ is non-lattice. Theorem
\ref{thm:adapted-renewal} and the change of variables $r=e^{-t}$ give

\[
  \lim_{r\rightarrow0^+}r^{s-2d}f(r)
  =
  \frac{1}{\eta_F}
  \int_0^1 r^{s-2d-1}\varphi_{2d}(r)\,dr
  =:C_F,
\]
where $\eta_F>0$. The integral is
finite because its integrand is bounded near the origin by
$Cr^{\gamma-1}$. Moreover, $f(r)>0$ for every $r>0$, and
$\varphi_{2d}(r)=f(r)>0$ when $\max_i r_i<r<1$. Hence
$C_F>0$, which proves \eqref{eq:local-two-body-asymptotic}.

Suppose now that $F$ is lattice. Let $h>0$ be its maximal lattice
span. Theorem \ref{thm:adapted-renewal} supplies the
periodic function $p$ in
\eqref{eq:arithmetic-renewal-periodization}. Setting
$\Pi_F(r):=p(-\log r)$, the $h$-periodicity of $p$ gives
$\Pi_F(e^hr)=\Pi_F(r)$.

It remains only to record positivity. The forcing term
$\varphi_{2d}$ is nonnegative by
Proposition \ref{thm:local-renewal}. Moreover, if $0\leq t<h$, then
$e^{-t}>\max_i r_i$, and hence
$\varphi_{2d}(e^{-t})=f(e^{-t})>0$. Thus the term $\ell=0$ in
\eqref{eq:arithmetic-renewal-periodization} is positive on a fundamental
interval. Since $f$ is continuous and positive on
$[e^{-h},1]$, this term is bounded below there by a positive constant.
The decay estimate for $\varphi_{2d}$ makes the periodized series
uniformly convergent on the same interval. Consequently, $p$, and hence
$\Pi_F$, is bounded above and below by positive constants.
Moreover, \eqref{eq:arithmetic-renewal-profile} gives
\[
  r^{s-2d}f(r)-\Pi_F(r)\longrightarrow0,
  \qquad r\rightarrow0^+.
\]
Multiplying by $r^{2d-s}$ proves
\eqref{eq:local-lattice-two-body-asymptotic}.
\end{proof}

\subsection{Passage from local to global}
\label{subsec:local-to-global}

We now pass from the local result established above to its global
counterpart.

Let
We first recall the notation from
Definition \ref{def:admissible-self-similar-boundary}. Let
$\Omega\subset\mathbb T^d$ be a domain whose boundary is an
{admissible self-similar boundary} of dimension $s\in(d-1,d)$. Thus there exist finitely
many {admissible local self-similar boundaries} $F_1,\ldots,F_m$, with
corresponding fixed feasible open sets $O_1,\ldots,O_m$, such that
\begin{equation}
\label{eq:global-boundary-decomposition}
\partial\Omega=\bigcup_{i=1}^mF_i,
\qquad
F_i\subset\overline{O_i},
\qquad
O_i\cap O_j=\varnothing
\quad\text{whenever }i\ne j.
\end{equation}
For each $i$, let
$\mathcal S_i=\{S_{i,1},\ldots,S_{i,N_i}\}$ be the IFS generating
$F_i$. By the definition of an {admissible local self-similar boundary},
$\overline{\dim}_M(\partial O_i)<s$.
After interchanging the two {prescribed sides of $O_i\setminus F_i$} if
necessary, the compatibility condition in the global definition gives
$\Omega_i^+=\Omega\cap O_i$ and
$\Omega_i^-=(\mathbb T^d\setminus\overline{\Omega})\cap O_i$.
We write $E_{O_i}^{\Omega_i^+}(r)$ for the corresponding local two-body set
defined above; this is the orientation compatible with $E_\Omega(r)$. Its
transpose is $E_{O_i}^{\Omega_i^-}(r)$, which has the same product volume. We
retain the notation $E_\Omega(r)$ introduced in the
Introduction for the global two-body set. Throughout this subsection, the
distance condition in these definitions is written as $|x-y|<r$.

The main result of this subsection is the following gluing lemma.

\begin{lemma}[Gluing lemma]
\label{thm:gluing}
Let $\Omega\subset\mathbb T^d$ be a domain whose boundary is an
{admissible self-similar boundary} of dimension $s$, with decomposition
\eqref{eq:global-boundary-decomposition}.

{
If $\partial\Omega$ is non-lattice, then its $s$-dimensional crossing
volume exists and satisfies
\[
  \mathcal V^s(\partial\Omega)
  =\frac{d+1}{\vol(B^{d-1})}
  \sum_{i=1}^m C_{F_i}>0,
\]
where $C_{F_i}$ is the constant in
\eqref{eq:local-two-body-asymptotic}. Equivalently,
\begin{equation}
\label{eq:global-gluing-asymptotic}
\vol^{2d}\bigl(E_\Omega(r)\bigr)
=
\frac{\vol(B^{d-1})}{d+1}\,
\mathcal V^s(\partial\Omega)r^{2d-s}
+
o\bigl(r^{2d-s}\bigr),
\qquad r\rightarrow0^+.
\end{equation}
}

If $\partial\Omega$ is lattice, let $h_i>0$ denote the maximal lattice
span of $F_i$. Suppose that these spans are commensurable, in the
sense that there exists $h_*>0$ such that
\[
  \frac{h_*}{h_i}\in\mathbb N,
  \qquad i=1,\ldots,m.
\]
Then there exist $q>1$ and a measurable, multiplicatively
$q$-periodic function $\Pi_\Omega:(0,\infty)\to(0,\infty)$, bounded
above and below by positive constants, such that
\begin{equation}
\label{eq:global-lattice-gluing-asymptotic}
  \vol^{2d}\bigl(E_\Omega(r)\bigr)
  =
  r^{2d-s}\Pi_\Omega(r)
  +o\bigl(r^{2d-s}\bigr),
  \qquad r\rightarrow0^+.
\end{equation}
\end{lemma}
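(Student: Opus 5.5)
The approach is to decompose the global two-body set $E_\Omega(r)$ into the disjoint local sets $E_{O_i}^{\Omega_i^+}(r)$, $i=1,\ldots,m$, together with an error set $B_\Omega(r)$. Then we show $\vol^{2d}(B_\Omega(r))=O(r^{2d-s+\gamma})$ for some $\gamma>0$, using the same kind of argument as in Proposition \ref{thm:local-renewal}. Once this is done, Corollary \ref{cor:local-two-body-asymptotics} gives the result directly, after summing over $i$.

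\emph{Step 1: decomposition.} Write $U=\bigcup_i O_i$ and $G=\mathbb T^d\setminus\overline U$. By the compatibility condition (iii) of Definition \ref{def:admissible-self-similar-boundary}, a pair $(x,y)\in E_\Omega(r)$ with $x,y\in O_i$ lies in $\Omega_i^+\times\Omega_i^-$, i.e. in $E_{O_i}^{\Omega_i^+}(r)$. Conversely each such local pair belongs to $E_\Omega(r)$. Since the $O_i$ are disjoint, we obtain
\[
\vol^{2d}\bigl(E_\Omega(r)\bigr)=\sum_{i=1}^m\vol^{2d}\bigl(E_{O_i}^{\Omega_i^+}(r)\bigr)+\vol^{2d}\bigl(B_\Omega(r)\bigr),
\]
where $B_\Omega(r)$ consists of the crossing pairs not contained in a single $O_i$.

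\emph{Step 2: error bound.} The sets $\partial O_i$ have Lebesgue measure zero, so up to null sets a pair in $B_\Omega(r)$ falls into one of three cases:
\begin{enumerate}
\item[(a)] one coordinate lies in some $O_i$ and the other lies outside $O_i$, so the pair is in $E^{O_i}_{\mathbb T^d}(r)$ or its transpose;
\item[(b)] both coordinates lie in $G$;
\item[(c)] both coordinates lie in different $O_i,O_j$, which is already covered by (a).
\end{enumerate}
For (a), Remark \ref{rem:minkowski-upper-bound} together with $\overline{\dim}_M(\partial O_i)<q<s$ gives $O(r^{2d-q})$.

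For (b), note that $\partial\Omega\subset\bigcup_i\overline{O_i}$, so $G\cap\partial\Omega=\varnothing$. Hence each component of $G$ lies in $\Omega$ or in its exterior. A crossing pair in $G\times G$ with $|x-y|<r$ therefore forces $x\in(\partial G)_r$. Since $\partial G\subset\bigcup_i\partial O_i$, this gives $O(r^d\vol((\partial G)_r))=O(r^{2d-q})$.

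One point needs care: the flat-torus ball around $x$ must be the Euclidean ball. This holds for small $r$ near the configuration, while away from $\partial\Omega$ there are no crossing pairs at small scale. Taking $\gamma=s-q$, the error is $o(r^{2d-s})$.

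\emph{Step 3: non-lattice case.} Each $F_i$ is non-lattice, so Corollary \ref{cor:local-two-body-asymptotics} gives $\vol^{2d}(E_{O_i}^{\Omega_i^+}(r))=C_{F_i}r^{2d-s}+o(r^{2d-s})$. Summing over $i$ yields \eqref{eq:global-gluing-asymptotic} with $\frac{\vol(B^{d-1})}{d+1}\mathcal V^s=\sum_iC_{F_i}>0$. This is exactly the claimed formula for $\mathcal V^s(\partial\Omega)$.

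\emph{Step 4: lattice case.} Set $q:=e^{h_*}$ and $\Pi_\Omega:=\sum_i\Pi_{F_i}$. Each $\Pi_{F_i}$ is multiplicatively $e^{h_i}$-periodic, and $h_*/h_i\in\mathbb N$, so each is also $q$-periodic. The sum is therefore measurable and $q$-periodic, and it is bounded above and below because each summand is positive and bounded. Adding the local asymptotics \eqref{eq:local-lattice-two-body-asymptotic} and the error from Step 2 gives \eqref{eq:global-lattice-gluing-asymptotic}.

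The main obstacle is Step 2, and specifically case (b). It needs the topological fact that the components of $G$ do not meet $\partial\Omega$, and it needs the Euclidean/torus identification near the configuration. Both can be handled in the same way as the $B_0(r)$ estimate in the proof of Proposition \ref{thm:local-renewal}.
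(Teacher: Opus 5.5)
Your proposal is correct and follows essentially the paper's route: the same disjoint decomposition of $E_\Omega(r)$ into the local two-body sets plus an error set, the same bound from Remark~\ref{rem:minkowski-upper-bound} for pairs straddling some $\partial O_i$, and the same summation of the local constants or profiles from Corollary~\ref{cor:local-two-body-asymptotics}, with $q=e^{h_*}$ and $\Pi_\Omega=\sum_i\Pi_{F_i}$ in the lattice case. The differences are minor. For pairs outside $\bigcup_i\overline{O_i}$, the paper applies the Remark to $A=\Omega\setminus\bigcup_i\overline{O_i}$ using $\partial A\subset\bigcup_i\partial O_i$, rather than your connected-component argument; since geodesic balls are connected, neither version needs your Euclidean/torus caveat. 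The paper also explicitly discards the null set of pairs with $y\in\partial\Omega\cap O_i$. Your Step~1 claim overlooks these pairs, which is harmless because $\partial\Omega$ is Lebesgue-null.
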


\begin{proof}
Fix the decomposition and the local sides described above. In particular,
the sets $O_1,\ldots,O_m$ are pairwise disjoint, and
$\Omega_i^+=\Omega\cap O_i$ and
$\Omega_i^-=(\mathbb T^d\setminus\overline{\Omega})\cap O_i$.
For each $i$, let $E_{O_i}^{\Omega_i^+}(r)$ be the local two-body set
defined above. The compatibility of the local and global sides gives
$E_{O_i}^{\Omega_i^+}(r)\subset E_\Omega(r)$. Since the first coordinate of
every pair in $E_{O_i}^{\Omega_i^+}(r)$ lies in $O_i$, the pairwise
disjointness of the sets $O_i$ implies that the local two-body sets are
also pairwise disjoint. Define
\[
B'(r)
:=
E_\Omega(r)\setminus
\bigcup_{i=1}^{m}E_{O_i}^{\Omega_i^+}(r).
\]
We then have the pointwise disjoint decomposition
\begin{equation}
\label{eq:global-local-decomposition}
E_\Omega(r)
=
\left(
\bigsqcup_{i=1}^{m}E_{O_i}^{\Omega_i^+}(r)
\right)
\sqcup B'(r).
\end{equation}
Taking product volumes, we obtain
\begin{equation}
\label{eq:global-local-volume}
\vol^{2d}\bigl(E_\Omega(r)\bigr)
=
\sum_{i=1}^{m}\vol^{2d}\bigl(E_{O_i}^{\Omega_i^+}(r)\bigr)
+
\vol^{2d}\bigl(B'(r)\bigr).
\end{equation}
We first prove that $B'(r)$ is of strictly lower order; this estimate
does not depend on whether the boundary pieces are lattice or non-lattice.

For open $D\subset\mathbb T^d$, we use the notation
$E_{\mathbb T^d}^{D}(r)$ introduced in
Subsection \ref{subsec:local-renewal}. Set
\[
H:=\bigcup_{i=1}^{m}\overline{O_i},
\qquad
A:=\Omega\setminus H,
\qquad
\Sigma:=\bigcup_{i=1}^{m}\partial O_i,
\]
and define
\[
S(r)
:=
E_\Omega(r)\cap
\left[
(\Sigma\times\mathbb T^d)
\cup
(\mathbb T^d\times(\Sigma\cup\partial\Omega))
\right].
\]
Since $\overline{\dim}_{M}(\partial O_i)<s<d$ for every $i$, the set
$\Sigma$ has zero $d$-dimensional measure. The same is true of
$\partial\Omega$, which is a finite union of self-similar sets of
dimension $s<d$. Consequently, $\vol^{2d}\bigl(S(r)\bigr)=0$.

We claim that
\begin{equation}
\label{eq:B-prime-cover}
\begin{aligned}
B'(r)\subset{}&
\bigcup_{i=1}^{m}E_{\mathbb T^d}^{O_i}(r)
\;\cup
\bigcup_{i=1}^{m}
E_{\mathbb T^d}^{\,\mathbb T^d\setminus\overline{O_i}}(r) \\
&{}\cup
E_{\mathbb T^d}^{A}(r)
\cup S(r).
\end{aligned}
\end{equation}
To prove \eqref{eq:B-prime-cover}, let
$(x,y)\in B'(r)\setminus S(r)$.
Since $y\in\mathbb T^d\setminus\Omega$ and
$y\notin\partial\Omega$, we have
$y\in\mathbb T^d\setminus\overline{\Omega}$.

Suppose first that $x\in O_i$ for some $i$. Then $y\notin O_i$;
otherwise the compatibility of the local and global decompositions would
imply $(x,y)\in E_{O_i}^{\Omega_i^+}(r)$, contrary to the definition of
$B'(r)$. It follows that
\[
(x,y)\in E_{\mathbb T^d}^{O_i}(r).
\]

Suppose next that $x\notin\bigcup_iO_i$ but $y\in O_i$ for some $i$.
Since $(x,y)\notin S(r)$, we also have $x\notin\partial O_i$, and
therefore
\[
x\in\mathbb T^d\setminus\overline{O_i}.
\]
Consequently,
\[
(x,y)\in
E_{\mathbb T^d}^{\,\mathbb T^d\setminus\overline{O_i}}(r).
\]

It remains to consider the case in which neither $x$ nor $y$ belongs
to any $O_i$. Since neither coordinate belongs to $\Sigma$, it follows
that $x,y\notin H$. As $x\in\Omega$ and
$y\in\mathbb T^d\setminus\overline{\Omega}$, we obtain
$x\in A$ and $y\notin A$. Thus
\[
(x,y)\in E_{\mathbb T^d}^{A}(r),
\]
which proves \eqref{eq:B-prime-cover}.

We next estimate the sets on the right-hand side of
\eqref{eq:B-prime-cover}. First,
\[
\partial(\mathbb T^d\setminus\overline{O_i})
\subset
\partial O_i
\subset
\Sigma.
\]
Moreover, since each $F_i\subset\overline{O_i}$, we have
$\partial\Omega=\bigcup_{i=1}^{m}F_i\subset H$. Because
$A=\Omega\cap(\mathbb T^d\setminus H)$, the boundary formula for
intersections gives
\[
\begin{aligned}
\partial A
&\subset
\left(
\partial\Omega\cap
\overline{\mathbb T^d\setminus H}
\right)
\cup
\left(
\overline{\Omega}\cap\partial H
\right) \\
&\subset
\partial H
\subset
\bigcup_{i=1}^{m}\partial\overline{O_i}
\subset
\Sigma.
\end{aligned}
\]
By assumption, $\overline{\dim}_M(\partial O_i)<s$ for every $i$.
The preceding inclusions and the finiteness of the decomposition therefore
show that the boundaries of $O_i$,
$\mathbb T^d\setminus\overline{O_i}$, and $A$ all have upper
Minkowski dimension strictly smaller than $s$. Hence we may choose
$q<s$ larger than all these dimensions. By
Remark \ref{rem:minkowski-upper-bound}, for each of these sets $D$,
\[
\vol^{2d}\bigl(E_{\mathbb T^d}^{D}(r)\bigr)
=
O\bigl(r^{2d-q}\bigr),
\qquad r\rightarrow0^+.
\]
Setting $\eta:=s-q>0$, taking product volumes in
\eqref{eq:B-prime-cover}, and using
$\vol^{2d}\bigl(S(r)\bigr)=0$, we obtain
\begin{equation}
\label{eq:B-prime-bound}
\vol^{2d}\bigl(B'(r)\bigr)
=
O\bigl(r^{2d-s+\eta}\bigr)
=
o\bigl(r^{2d-s}\bigr).
\end{equation}

Suppose first that $\partial\Omega$ is non-lattice. For each $i$,
Corollary \ref{cor:local-two-body-asymptotics} gives a constant
$C_{F_i}>0$ such that
\[
  \vol^{2d}\bigl(E_{O_i}^{\Omega_i^+}(r)\bigr)
  =C_{F_i}r^{2d-s}+o\bigl(r^{2d-s}\bigr).
\]
Combining these local asymptotics with
\eqref{eq:global-local-volume} and \eqref{eq:B-prime-bound} proves
{\eqref{eq:global-gluing-asymptotic} and, by
\eqref{eq:crossing-volume-definition},
\[
  \mathcal V^s(\partial\Omega)
  =\frac{d+1}{\vol(B^{d-1})}
  \sum_{i=1}^m C_{F_i}>0.
\]}

Suppose instead that $\partial\Omega$ is lattice and that the maximal
lattice spans of its local pieces are commensurable. Let $h_i$ be the
maximal lattice span of $F_i$, and choose $h_*>0$ as in the statement
of the theorem. By
Corollary \ref{cor:local-two-body-asymptotics},
\[
  \vol^{2d}\bigl(E_{O_i}^{\Omega_i^+}(r)\bigr)
  =r^{2d-s}\Pi_{F_i}(r)+o\bigl(r^{2d-s}\bigr),
\]
where $\Pi_{F_i}$ is multiplicatively $e^{h_i}$-periodic. Hence all
the local profiles are multiplicatively $q:=e^{h_*}$-periodic. The
same two identities and remainder estimate now give
\eqref{eq:global-lattice-gluing-asymptotic}, with
\[
  \Pi_\Omega(r):=\sum_{i=1}^m\Pi_{F_i}(r).
\]
This function is bounded above and below by positive constants and is
multiplicatively $q$-periodic.
\end{proof}

\begin{proof}[Proof of Theorem \ref{thm:admissible-self-similar-kuznecov}]
{
Since $\partial\Omega$ is non-lattice, the first part of
Lemma \ref{thm:gluing} gives
$0<\mathcal V^s(\partial\Omega)<\infty$.
Applying Theorem \ref{thm:crossing-volume-kuznecov}, we obtain
\eqref{eq:intro-two-term-kuznecov}, with
$C_\Omega=C_{d,s}\mathcal V^s(\partial\Omega)$ as in
\eqref{eq:intro-kuznecov-constant}.}
\end{proof}

\section{Kuznecov bounds in the lattice case}
\label{sec:lattice-case}

The preceding sections derived the precise Kuznecov asymptotic in the
non-lattice case. We now turn to the spectral consequences of the lattice
crossing-volume asymptotic obtained in
Section \ref{sec:self-similar-crossing-volumes}. By the renewal theorem
(Theorem \ref{thm:adapted-renewal}), the normalized crossing volume
$r^{s-2d}\vol^{2d}\bigl(E_\Omega(r)\bigr)$ has a log-periodic asymptotic profile as
$r\rightarrow0^+$, rather than converging to a constant.

For the Kuznecov sum, however, such a precise
log-periodic asymptotic cannot in general be obtained. We begin
by recalling the lattice-case theorem stated in the Introduction.

\medskip
\noindent\textbf{Theorem \ref{thm:lattice-kuznecov}
(Kuznecov estimate in the lattice case).}
Let $\Omega\subset\mathbb T^d$ be a domain whose boundary is an
{admissible self-similar boundary} of dimension $s\in(d-1,d)$. Suppose that
$\partial\Omega$ is lattice and that the maximal lattice spans of its
local pieces are commensurable. Then there exist $q>1$, a positive, right-continuous,
multiplicatively $q$-periodic function
$P_\Omega:(0,\infty)\to(0,\infty)$, bounded above and below by positive
constants, and a function $\varepsilon:(0,\infty)\to(0,1)$ satisfying
$\varepsilon(\lambda)\to0$ as $\lambda\to\infty$, such that
\begin{align*}
  \vol(\Omega)
  &-\lambda^{s-d}
    P_\Omega\bigl((1-\varepsilon(\lambda))\lambda\bigr)
    +o(\lambda^{s-d})
  \\
  &\leq N_\Omega(\lambda)
  \leq
  \vol(\Omega)
  -\lambda^{s-d}
    P_\Omega\bigl((1+\varepsilon(\lambda))\lambda\bigr)
  +o(\lambda^{s-d})
\end{align*}
as $\lambda\to\infty$. The two little-oh terms in this estimate may
be different.
\medskip

The reason we obtain only this weaker conclusion lies in the Tauberian
step. The preceding geometric and analytic arguments remain valid, but in
the presence of log-periodic oscillations the corresponding Tauberian
theorem yields convergence only along multiplicative orbits, rather than a
global exact asymptotic. The main purpose of this section is therefore to
state this log-periodic Tauberian theorem and use it, together with
monotonicity, to prove Theorem \ref{thm:lattice-kuznecov}.

{As in Section \ref{sec:heat-tauberian}, set}
\[
  \alpha:=2d-s,
  \qquad
  \beta:=\frac{\alpha-d}{2}=\frac{d-s}{2}.
\]
Recall also that
\begin{equation}
\label{eq:direct-kuznecov-tail}
  R_\Omega(\lambda)
  :=
  \vol(\Omega)-N_\Omega(\lambda)
  =
  \mathcal T_\Omega(\lambda^2).
\end{equation}
Thus $R_\Omega$ is nonnegative, nonincreasing, and right-continuous.

We first record the two conclusions from the preceding sections that carry
over directly to the lattice setting. Suppose that $\partial\Omega$ is
lattice and that the maximal lattice spans of its local pieces are
commensurable. The lattice part of Lemma \ref{thm:gluing} gives a
number $q>1$ and a measurable, multiplicatively $q$-periodic function
$\Pi_\Omega$, bounded above and below by positive constants, such that
\begin{equation}
\label{eq:lattice-crossing-profile}
  {\vol^{2d}\bigl(E_\Omega(r)\bigr)}
  =
  r^\alpha\Pi_\Omega(r)
  +o(r^\alpha),
  \qquad r\rightarrow0^+.
\end{equation}
{
Substituting \eqref{eq:lattice-crossing-profile} into
Lemma \ref{lem:gaussian-average}, making the change of variables
$r=\sqrt t\,u$, and then applying
Proposition \ref{prop:gaussian-to-smoothed-tail}, we obtain
}
\begin{equation}
\label{eq:smoothed-log-periodic-asymptotic}
  H_\Omega(t)
  =
  t^\beta Q_\Omega(t)
  +o(t^\beta),
  \qquad t\rightarrow0^+,
\end{equation}
where
\begin{equation}
\label{eq:smoothed-log-periodic-profile}
  Q_\Omega(t)
  :=
  \frac{(4\pi)^{-d/2}}{2}
  \int_0^\infty
  u^{\alpha+1}e^{-u^2/4}
  \Pi_\Omega(\sqrt t\,u)\,du.
\end{equation}
The function $Q_\Omega$ is continuous, strictly positive, bounded, and
{multiplicatively $q^2$-periodic. These properties follow directly
from \eqref{eq:smoothed-log-periodic-profile}: periodicity follows from
$\Pi_\Omega(qr)=\Pi_\Omega(r)$, positivity and boundedness follow from the
corresponding bounds for $\Pi_\Omega$, and continuity follows by viewing
the integral in logarithmic coordinates as a convolution with an
$L^1$ kernel.}

We next recall two elementary facts that will be used after the Tauberian
step. The first concerns the discontinuities of one-sided continuous
functions; its proof is a standard exercise in real analysis.

\begin{lemma}
\label{lem:right-continuous-discontinuities}
Let $I\subset\mathbb R$ be an interval and let
$f:I\to\mathbb R$ be right-continuous. Then $f$ has at most
countably many discontinuities. In particular, its set of continuity
points is dense in $I$.
\end{lemma}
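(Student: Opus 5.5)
The plan is the classical argument for right-continuous functions, reducing countability of the discontinuity set to countability of a family of rational-indexed sets. For each $x\in I$ the right limit $f(x^+)$ exists and equals $f(x)$. Hence $x$ is a discontinuity point precisely when $f$ fails to be left-continuous at $x$, that is, when the oscillation of $f$ from the left is positive. For $n\in\mathbb N$ I introduce
\[
D_n:=\Bigl\{x\in I:\ \limsup_{y\to x^-}|f(y)-f(x)|>1/n\Bigr\}.
\]
The set of discontinuities is then $\bigcup_n D_n$, and it suffices to show that each $D_n$ is countable.

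Fix $n$ and $x\in D_n$. By right-continuity at $x$ there is $\delta_x>0$ such that $|f(z)-f(x)|<1/(4n)$ for all $z\in[x,x+\delta_x)\cap I$. I then claim that the intervals $J_x:=(x,x+\delta_x)$, for $x\in D_n$, are pairwise disjoint.

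Suppose $x<x'$ are both in $D_n$ and $x'\in J_x$. Points $y<x'$ close to $x'$ lie in $(x,x')\subset J_x$, so $|f(y)-f(x')|\le |f(y)-f(x)|+|f(x)-f(x')|<1/(2n)$. Hence $\limsup_{y\to x'^-}|f(y)-f(x')|\le 1/(2n)$, which contradicts $x'\in D_n$. So $x'\notin J_x$, and therefore $J_x\cap J_{x'}=\varnothing$. Choosing a rational point in each $J_x$ gives an injection of $D_n$ into $\mathbb Q$. (If $x$ is the right endpoint of $I$, it is a single point and does not affect countability.)

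Finally, a countable set cannot contain any nondegenerate interval. So every open subinterval of $I$ contains a continuity point, which gives density. The only delicate step is the disjointness of the $J_x$. It works because one uses a threshold ($1/(4n)$) strictly smaller than the one defining $D_n$, so the inherited left oscillation at $x'$ is forced below $1/n$.
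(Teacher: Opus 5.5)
Your proof is correct and complete. Comparing thresholds, $1/(4n)$ for $\delta_x$ against $1/n$ in the definition of $D_n$, makes the right-intervals $J_x$, $x\in D_n$, pairwise disjoint, so each $D_n$ is countable, and density follows because a countable set contains no nondegenerate interval. The paper gives no proof, calling the lemma a standard exercise, and your argument is exactly that standard one; a left endpoint of $I$, where the left oscillation is vacuous and $f$ is continuous by right-continuity, needs no separate treatment.
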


The second lemma passes from convergence on a dense set to a uniform
comparison on compact intervals.

\begin{lemma}[Uniform comparison for monotone functions]
\label{lem:shifted-monotone-convergence}
Let $H_m,H:(0,\infty)\to\mathbb R$ be nonincreasing functions, and
suppose that $H_m(z)\to H(z)$ at every continuity point $z$ of $H$.
For every compact interval $K\subset(0,\infty)$, there exists a sequence
$\delta_m\to0^+$ such that, for all sufficiently large $m$,
\begin{equation}
\label{eq:shifted-monotone-comparison}
  H(z+\delta_m)-\delta_m
  \leq H_m(z)
  \leq H(z-\delta_m)+\delta_m,
  \qquad z\in K.
\end{equation}
\end{lemma}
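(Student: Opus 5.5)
We prove Lemma \ref{lem:shifted-monotone-convergence} by a finite-grid argument that uses only monotonicity. Fix a compact interval $K=[a,b]\subset(0,\infty)$. The idea is to sandwich every $z\in K$ between nearby continuity points of $H$, where $H_m\to H$ is known, and then let monotonicity of $H_m$ and $H$ do the rest. Since $H$ is monotone, its discontinuities are at most countable, so its continuity points are dense in $(0,\infty)$. This is the same density fact as Lemma \ref{lem:right-continuous-discontinuities}, here for monotone rather than right-continuous $H$.

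\textbf{Step 1: a grid for a fixed $\delta$.} Fix $\delta\in(0,a/4)$. Choose continuity points $w_0<w_1<\dots<w_n$ of $H$ with
\[
w_0\in(a-\delta,a-\delta/2),\qquad w_n\in(b+\delta/2,b+\delta),\qquad w_{k+1}-w_k<\delta/2 .
\]
Pointwise convergence at these finitely many points gives $M(\delta)$ such that $|H_m(w_k)-H(w_k)|<\delta$ for all $k$ and all $m\geq M(\delta)$.

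\textbf{Step 2: the sandwich.} Let $z\in K$. Pick $k$ with $w_k\leq z<w_{k+1}$; then $z-\delta<w_k\leq z$. Since $H_m$ and $H$ are nonincreasing, for $m\geq M(\delta)$,
\[
H_m(z)\leq H_m(w_k)<H(w_k)+\delta\leq H(z-\delta)+\delta .
\]
Symmetrically, pick $w_j$ with $z<w_j<z+\delta$. Then
\[
H_m(z)\geq H_m(w_j)>H(w_j)-\delta\geq H(z+\delta)-\delta .
\]
This is \eqref{eq:shifted-monotone-comparison} with the fixed value $\delta$, valid for all $m\geq M(\delta)$.

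\textbf{Step 3: diagonalization.} Take $\delta^{(\ell)}=2^{-\ell}\min\{1,a/4\}$. We may choose the thresholds $M_\ell:=M(\delta^{(\ell)})$ strictly increasing. Define $\delta_m:=\delta^{(\ell)}$ for $M_\ell\leq m<M_{\ell+1}$, and $\delta_m:=\delta^{(1)}$ for $m<M_1$. Then $\delta_m\to0^+$, and for every $m\geq M_1$ the inequality holds with $\delta_m$. The arguments $z\pm\delta_m$ stay inside $(0,\infty)$ because $\delta_m<a/4$.

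The only delicate point is the diagonal choice, which ensures a single sequence $\delta_m$ works. Everything else is the standard monotone sandwich, and no continuity of $H$ beyond the existence of a dense set of continuity points is needed.
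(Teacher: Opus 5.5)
Your proposal is correct and follows essentially the same route as the paper: a finite grid of continuity points of $H$ with small mesh covering a neighborhood of $K$, pointwise convergence at those finitely many points, the monotone sandwich using that both $H_m$ and $H$ are nonincreasing, and a diagonal choice of increasing thresholds to produce a single sequence $\delta_m\to0^+$. The only differences are cosmetic: you use mesh $\delta/2$ and the bound $\delta<a/4$, where the paper uses mesh $\eta_k$ and $\eta_k<c/2$ to keep $z\pm\delta_m$ inside $(0,\infty)$.
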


\begin{proof}
After enlarging $K$ if necessary, write $K=[c,d]$, where
$0<c<d<\infty$, and choose a decreasing sequence
$\eta_k\to0^+$ such that $\eta_k<c/2$. Since the continuity points
of $H$ are dense, for each $k$ we may choose finitely many such points
\[
  u_{k,0}<u_{k,1}<\cdots<u_{k,J_k}
\]
with
\[
  u_{k,0}<c<d<u_{k,J_k},
  \qquad
  \max_{0\leq j<J_k}(u_{k,j+1}-u_{k,j})<\eta_k.
\]
There exists an integer $M_k$ such that
\begin{equation}
\label{eq:finite-mesh-convergence}
  |H_m(u_{k,j})-H(u_{k,j})|<\eta_k,
  \qquad
  0\leq j\leq J_k,\quad m\geq M_k.
\end{equation}
Increasing the $M_k$ if necessary, we may assume that
$M_1<M_2<\cdots$.

For $M_k\leq m<M_{k+1}$, set $\delta_m:=\eta_k$, and set
$\delta_m:=\eta_1$ when $m<M_1$. Fix $z\in[c,d]$, and choose
$j$ such that $u_{k,j}\leq z\leq u_{k,j+1}$. Since $H_m$ is
nonincreasing, \eqref{eq:finite-mesh-convergence} gives
\[
  H(u_{k,j+1})-\eta_k
  \leq H_m(z)
  \leq H(u_{k,j})+\eta_k.
\]
The mesh condition implies
$z-\eta_k<u_{k,j}\leq z\leq u_{k,j+1}<z+\eta_k$.
The monotonicity of $H$ therefore yields
\[
  H(z+\eta_k)-\eta_k
  \leq H_m(z)
  \leq H(z-\eta_k)+\eta_k.
\]
This is \eqref{eq:shifted-monotone-comparison}. Finally,
$m\to\infty$ implies $k\to\infty$, and hence
$\delta_m\to0^+$.
\end{proof}

We now state the log-periodic Tauberian theorem that relates
\eqref{eq:smoothed-log-periodic-asymptotic} to the sharp spectral tail.
For $r>1$, let $\mathcal P_r$ denote the class of positive, bounded,
right-continuous functions $p:(0,\infty)\to(0,\infty)$ satisfying
$\inf_{1\leq x\leq r}p(x)>0$ and $p(rx)=p(x)$ for every $x>0$.
The following is the case $0<\theta<1$ of
\cite[Corollary 3.1, p. 175]{Kevei2020}, written for finite measures
rather than probability measures.

\begin{theorem}[Log-periodic Tauberian theorem]
\label{thm:log-periodic-tauberian}
Let $\mu$ be a nonzero finite positive Borel measure on
$[0,\infty)$, and set
\[
  \widehat\mu(t):=\int_{[0,\infty)}e^{-tS}\,d\mu(S),
  \qquad
  T_\mu(S):=\mu((S,\infty)).
\]
Let $0<\theta<1$, $r>1$, and let $\ell$ be slowly varying at
infinity. The following assertions are equivalent:

\begin{enumerate}
\item There exists $Q\in\mathcal P_r$ such that
\[
  \mu([0,\infty))-\widehat\mu(t)
  \sim t^\theta\ell(1/t)Q(t),
  \qquad t\rightarrow0^+.
\]

\item There exists $P\in\mathcal P_r$ such that
\begin{equation}
\label{eq:log-periodic-tauberian-tail}
  \lim_{m\to\infty}
  \frac{(r^mz)^\theta}{\ell(r^mz)}
  T_\mu(r^mz)
  =P(z)
\end{equation}
for every $z\in\operatorname{Cont}(P)$.
\end{enumerate}

When these assertions hold, the two profiles are related by
\begin{equation}
\label{eq:log-periodic-profile-transform}
  Q(t)
  =
  \int_0^\infty e^{-u}u^{-\theta}P(u/t)\,du,
  \qquad t>0.
\end{equation}
\end{theorem}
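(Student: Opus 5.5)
The plan is to reduce the statement to the probability-measure case treated in \cite[Corollary 3.1]{Kevei2020}, and to record the argument in a form that also explains the profile relation \eqref{eq:log-periodic-profile-transform}.

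\textbf{Reduction and rewriting.} Set $m_0:=\mu([0,\infty))>0$ and $\tilde\mu:=\mu/m_0$. Both sides of (1) and of (2) scale linearly in $\mu$. Replacing $Q$ and $P$ by $Q/m_0$ and $P/m_0$ preserves membership in $\mathcal P_r$, so the statement for $\mu$ is equivalent to the one for $\tilde\mu$. Next, exactly as in Lemma \ref{lem:tail-laplace}, Tonelli's theorem gives
\[
  \mu([0,\infty))-\widehat\mu(t)=t\int_0^\infty e^{-tS}T_\mu(S)\,dS
  =\int_0^\infty e^{-u}T_\mu(u/t)\,du,
\]
so everything is phrased in terms of the nonincreasing tail $T_\mu$. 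Put $t_m:=r^{-m}t$ and
\[
  F_m(z):=\frac{(r^m)^\theta}{\ell(r^m)}\,T_\mu(r^mz).
\]
The slow variation of $\ell$ gives $\ell(r^mz)/\ell(r^m)\to1$ locally uniformly in $z$.

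\textbf{(2)$\Rightarrow$(1), the Abelian direction.} The $F_m$ are nonincreasing and converge to $z^{-\theta}P(z)$ at continuity points of $P$, which are dense by Lemma \ref{lem:right-continuous-discontinuities}. Lemma \ref{lem:shifted-monotone-convergence} then upgrades this to locally uniform two-sided bounds. These bounds, together with Potter bounds for $\ell$ and the integrability of $e^{-u}u^{-\theta}$ near $0$ (since $\theta<1$), allow dominated convergence in
\[
  \frac{t_m^{-\theta}}{\ell(1/t_m)}\int_0^\infty e^{-u}T_\mu(u/t_m)\,du .
\]
This gives convergence to $\int_0^\infty e^{-u}u^{-\theta}P(u/t)\,du$. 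The convergence is uniform for $t\in[1,r]$, so it holds along all $t\to0^+$, and the limit is $Q$ as in \eqref{eq:log-periodic-profile-transform}. Positivity of $Q$, its boundedness, and its $r$-periodicity follow directly from the corresponding properties of $P$. Right-continuity of $Q$ follows from dominated convergence, since $P(u/t)$ is right-continuous in $t$ except on a $u$-null set.

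\textbf{(1)$\Rightarrow$(2), the Tauberian direction.} First, the comparison argument of Corollary \ref{cor:tauberian-comparison}, run with the slowly varying factor inserted, shows that the $F_m$ are bounded above and below on compact subsets of $(0,\infty)$ uniformly in $m$. By Helly's selection theorem every subsequence of $(F_{m})$ has a further subsequence converging at continuity points to some nonincreasing $\Phi$. Dominated convergence, justified by the same uniform bounds, identifies
\[
  \int_0^\infty e^{-u}\Phi(u/t)\,du=t^{\theta}Q(t)\quad\text{for every } t>0 .
\]
This identity uses $Q(r^{-m}t)=Q(t)$. By uniqueness of the Laplace transform, $\Phi$ is determined up to its countably many discontinuities, so $F_m$ converges at continuity points without passing to subsequences.

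Define $P(z):=z^\theta\Phi(z)$, normalized to be right-continuous. Comparing the limit of $F_{m+1}(z)$ with that of $F_m(rz)$ gives $P(rz)=P(z)$. The uniform two-sided bounds give $P\in\mathcal P_r$. This is \eqref{eq:log-periodic-tauberian-tail}, and the Laplace identity above is exactly \eqref{eq:log-periodic-profile-transform}.

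The main obstacle is the uniform local boundedness of $F_m$ from below. The lower bound requires the $\theta$-trick of Corollary \ref{cor:tauberian-comparison} combined with Potter bounds. I would try that first, and otherwise defer to Kevei's proof for this step.
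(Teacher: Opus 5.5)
Your proposal is correct. The paper does not prove this statement itself. It quotes the case $0<\theta<1$ of Kevei's Corollary~3.1 and only rescales from probability to finite measures, which is exactly your first step: divide by $m_0=\mu([0,\infty))$ and replace $P,Q$ by $P/m_0,Q/m_0$.

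Everything after that step in your proposal is a self-contained substitute for the citation, so it is a genuinely different route:
\begin{itemize}
\item the Abelian direction goes by dominated convergence along the orbits $t_m=r^{-m}t$;
\item the Tauberian direction goes by a priori bounds from the comparison argument of Corollary~\ref{cor:tauberian-comparison}, then Helly selection, then uniqueness of the Laplace transform, and finally the scaling law $\Phi(rz)=r^{-\theta}\Phi(z)$, obtained by comparing $F_m(rz)=r^{-\theta}\frac{\ell(r^{m+1})}{\ell(r^m)}F_{m+1}(z)$ in the limit.
\end{itemize}
Your route works directly with the tails $T_\mu$ and uses only tools already in the paper plus Helly, Potter and Laplace uniqueness. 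The paper's route is shorter but asks the reader to match Kevei's normalizations.

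Three points to tighten if you write it out.

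First, the step you call the main obstacle is unnecessary. Helly needs only the upper bound on $F_m$, which is the easy half of the comparison argument. Positivity of $P$ then comes from the limit itself. If the right-continuous version of $\Phi$ vanished at some $z_0$, then monotonicity and $\Phi(r^{-k}z_0)=r^{k\theta}\Phi(z_0)=0$ would force $\Phi\equiv0$. That contradicts $t\int_0^\infty e^{-tv}\Phi(v)\,dv=t^\theta Q(t)>0$. Hence $P(z)=z^\theta\Phi(z)\geq\Phi(r)>0$ on $[1,r]$. If you want the lower bound on $F_m$ anyway, the comparison argument needs only $\ell(\vartheta R)/\ell(R)\to1$ for fixed $\vartheta$, not Potter bounds.

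Second, in both directions Potter bounds give the domination $F_m(u/t)\leq C(u/t)^{-\theta-\delta}$ only when $r^mu/t\geq X$ for a fixed threshold $X$. The range $u<Xtr^{-m}$ must be split off. Its contribution is at most $Xt\,\mu([0,\infty))\,r^{-m(1-\theta)}/\ell(r^m)\to0$. In the Abelian direction, the global bound $T_\mu(S)\leq CS^{-\theta}\ell(S)$ for large $S$, which feeds the Potter step, comes from boundedness of $F_m$ on the fundamental interval $[1,r]$ together with monotonicity of $T_\mu$.

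Third, your claim of uniformity in $t\in[1,r]$ is right but deserves its one-line reason. The map $t\mapsto\int_0^\infty e^{-u}F_m(u/t)\,du$ is nondecreasing, and the limit $t^\theta Q(t)$ is continuous. Pointwise convergence of monotone functions to a continuous limit on a compact interval is uniform.
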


We are now ready to prove Theorem \ref{thm:lattice-kuznecov}.

\begin{proof}[Proof of Theorem \ref{thm:lattice-kuznecov}]
The function $Q_\Omega$ in
\eqref{eq:smoothed-log-periodic-asymptotic} belongs to
$\mathcal P_{q^2}$. Consider the finite spectral measure
\[
  \mu_\Omega
  :=
  \sum_{j=0}^\infty
  \left|\int_\Omega e_j\,dV_g\right|^2\delta_{\lambda_j^2}.
\]
By \eqref{eq:smoothed-tail} and \eqref{eq:sharp-tail},
\[
  \mu_\Omega([0,\infty))-\widehat\mu_\Omega(t)=H_\Omega(t),
  \qquad
  T_{\mu_\Omega}(S)=\mathcal T_\Omega(S).
\]
Since $Q_\Omega$ is bounded away from zero,
\eqref{eq:smoothed-log-periodic-asymptotic} is equivalent to
\[
  H_\Omega(t)\sim t^\beta Q_\Omega(t),
  \qquad t\rightarrow0^+.
\]
Theorem \ref{thm:log-periodic-tauberian}, applied with
$\theta=\beta$, $r=q^2$, and $\ell\equiv1$, gives
$P_T\in\mathcal P_{q^2}$ such that
\[
  \lim_{m\to\infty}
  \bigl((q^2)^mw\bigr)^\beta
  \mathcal T_\Omega\bigl((q^2)^mw\bigr)
  =P_T(w)
\]
at every continuity point $w$ of $P_T$. Define
$P_\Omega(z):=P_T(z^2)$. Then $P_\Omega\in\mathcal P_q$, and
\begin{equation}
\label{eq:orbitwise-log-periodic-limit}
  \lim_{m\to\infty}
  (q^mz)^{2\beta}R_\Omega(q^mz)
  =P_\Omega(z)
\end{equation}
at every continuity point $z$ of $P_\Omega$.

For $m\geq1$ and $z>0$, set
\[
  G_m(z):=q^{2m\beta}R_\Omega(q^mz),
  \qquad
  G(z):=z^{-2\beta}P_\Omega(z).
\]
Each $G_m$ is nonincreasing, and
\eqref{eq:orbitwise-log-periodic-limit} gives
$G_m(z)\to G(z)$ at every continuity point of $G$.
If $u<v$ are continuity points, then
$G_m(u)\geq G_m(v)$; passing to the limit gives
$G(u)\geq G(v)$. Lemma \ref{lem:right-continuous-discontinuities},
the density of the continuity points, and the right-continuity of $G$
extend this inequality to all $0<u<v$. Thus $G$ is nonincreasing.

Lemma \ref{lem:shifted-monotone-convergence}, applied on $[1,q]$,
gives a sequence $\delta_m\to0^+$ such that
\begin{equation}
\label{eq:G-shifted-sandwich}
  G(z+\delta_m)-\delta_m
  \leq G_m(z)
  \leq G(z-\delta_m)+\delta_m,
  \qquad 1\leq z\leq q,
\end{equation}
for all sufficiently large $m$. We may assume that
$0<\delta_m<1/2$.

For every sufficiently large $\lambda$, write
$\lambda=q^mz$, where $m=\lfloor\log_q\lambda\rfloor$ and
$1\leq z<q$, and set
\[
  \varepsilon(\lambda):=\frac{\delta_m}{z}.
\]
Then $\varepsilon(\lambda)\to0$, and
$q^m(z\pm\delta_m)=(1\pm\varepsilon(\lambda))\lambda$.
Multiplying \eqref{eq:G-shifted-sandwich} by $q^{-2m\beta}$ and using
the multiplicative periodicity of $P_\Omega$, we obtain
\begin{align*}
  &\bigl((1+\varepsilon(\lambda))\lambda\bigr)^{-2\beta}
  P_\Omega\bigl((1+\varepsilon(\lambda))\lambda\bigr)
  -q^{-2m\beta}\delta_m
  \\
  &\hspace{3em}\leq R_\Omega(\lambda)
  \leq
  \bigl((1-\varepsilon(\lambda))\lambda\bigr)^{-2\beta}
  P_\Omega\bigl((1-\varepsilon(\lambda))\lambda\bigr)
  +q^{-2m\beta}\delta_m.
\end{align*}
Since $P_\Omega$ is bounded,
\[
  \bigl((1\pm\varepsilon(\lambda))\lambda\bigr)^{-2\beta}
  P_\Omega\bigl((1\pm\varepsilon(\lambda))\lambda\bigr)
  =
  \lambda^{-2\beta}
  P_\Omega\bigl((1\pm\varepsilon(\lambda))\lambda\bigr)
  +o(\lambda^{-2\beta}),
\]
while
\[
  q^{-2m\beta}\delta_m
  =
  \lambda^{-2\beta}z^{2\beta}\delta_m
  =
  o(\lambda^{-2\beta}).
\]
Combining these estimates with the preceding bounds for
$R_\Omega(\lambda)$, we obtain
\begin{align*}
  \lambda^{-2\beta}
  P_\Omega\bigl((1+\varepsilon(\lambda))\lambda\bigr)
  +o(\lambda^{-2\beta})
  &\leq R_\Omega(\lambda)
  \\
  &\leq
  \lambda^{-2\beta}
  P_\Omega\bigl((1-\varepsilon(\lambda))\lambda\bigr)
  +o(\lambda^{-2\beta}),
  \qquad \lambda\to\infty.
\end{align*}
Using \eqref{eq:direct-kuznecov-tail}, the identity
$-2\beta=s-d$, and reversing the inequalities, we conclude that
\begin{align*}
  \vol(\Omega)
  &-\lambda^{s-d}
  P_\Omega\bigl((1-\varepsilon(\lambda))\lambda\bigr)
  +o(\lambda^{s-d})
  \leq N_\Omega(\lambda)
  \\
  &\leq
  \vol(\Omega)
  -\lambda^{s-d}
  P_\Omega\bigl((1+\varepsilon(\lambda))\lambda\bigr)
  +o(\lambda^{s-d}).
\end{align*}
This is \eqref{eq:lattice-kuznecov-estimate}. Finally, setting
$\varepsilon(\lambda)=1/2$ on the remaining bounded interval extends
$\varepsilon$ to $(0,\infty)$.
\end{proof}

\begin{remark}
\label{rem:continuous-periodic-coefficient}
The horizontal displacement in
Theorem \ref{thm:lattice-kuznecov} is needed because
$P_\Omega$ need not be continuous. If $P_\Omega$ is continuous, its
multiplicative periodicity implies uniform continuity on a fundamental
interval, and the values at the displaced arguments may be replaced by
$P_\Omega(\lambda)$. In this case,
\[
  N_\Omega(\lambda)
  =
  \vol(\Omega)
  -\lambda^{s-d}P_\Omega(\lambda)
  +o(\lambda^{s-d}),
  \qquad \lambda\to\infty.
\]
\end{remark}

\bibliographystyle{alpha}
\bibliography{references}

\begin{thebibliography}{RPTWZ25}

\bibitem[BG27]{BerettiGennaioli2024}
Thomas Beretti and Luca Gennaioli.
\newblock Fourier transform of {BV} functions and applications.
\newblock {\em Journal of Functional Analysis}, 292(1):111700, 2027.

\bibitem[BGT87]{BGT1987}
N.~H. Bingham, C.~M. Goldie, and J.~L. Teugels.
\newblock {\em Regular Variation}, volume~27 of {\em Encyclopedia of
  Mathematics and its Applications}.
\newblock Cambridge University Press, Cambridge, 1987.

\bibitem[BGV92]{BGV1992}
Nicole Berline, Ezra Getzler, and Mich{\`e}le Vergne.
\newblock {\em Heat Kernels and Dirac Operators}, volume 298 of {\em
  Grundlehren der Mathematischen Wissenschaften}.
\newblock Springer-Verlag, Berlin, 1992.

\bibitem[D{\'a}v02]{Davila2002}
Juan D{\'a}vila.
\newblock On an open question about functions of bounded variation.
\newblock {\em Calculus of Variations and Partial Differential Equations},
  15(4):519--527, 2002.

\bibitem[Fal14]{Falconer2014}
Kenneth Falconer.
\newblock {\em Fractal Geometry: Mathematical Foundations and Applications}.
\newblock John Wiley \& Sons, Ltd., Chichester, 3 edition, 2014.

\bibitem[FL25]{FrankLarson2025}
Rupert~L. Frank and Simon Larson.
\newblock Riesz means asymptotics for {Dirichlet} and {Neumann} laplacians on
  {Lipschitz} domains.
\newblock {\em Inventiones Mathematicae}, 241(3):999--1079, 2025.

\bibitem[Gri09]{Grigoryan2009}
Alexander Grigor'yan.
\newblock {\em Heat Kernel and Analysis on Manifolds}, volume~47 of {\em AMS/IP
  Studies in Advanced Mathematics}.
\newblock American Mathematical Society; International Press, Providence, RI;
  Boston, MA, 2009.

\bibitem[Hut81]{Hutchinson1981}
John~E. Hutchinson.
\newblock Fractals and self-similarity.
\newblock {\em Indiana University Mathematics Journal}, 30(5):713--747, 1981.

\bibitem[Ivr80]{Ivrii1980}
Victor~Ya. Ivrii.
\newblock Second term of the spectral asymptotic expansion of the
  {Laplace--Beltrami} operator on manifolds with boundary.
\newblock {\em Functional Analysis and Its Applications}, 14(2):98--106, 1980.

\bibitem[Kev20]{Kevei2020}
P{\'e}ter Kevei.
\newblock Regularly log-periodic functions and some applications.
\newblock {\em Probability and Mathematical Statistics}, 40(1):159--182, 2020.

\bibitem[KP10]{KeletiPaquette2010}
Tam{\'a}s Keleti and Elliot Paquette.
\newblock The trouble with von {Koch} curves built from \({n}\)-gons.
\newblock {\em The American Mathematical Monthly}, 117(2):124--137, 2010.

\bibitem[LV96]{LevitinVassiliev1996}
Michael Levitin and Dmitri Vassiliev.
\newblock Spectral asymptotics, renewal theorem, and the {Berry} conjecture for
  a class of fractals.
\newblock {\em Proceedings of the London Mathematical Society}, 72(1):188--214,
  1996.

\bibitem[PX23]{ParkXiao2023}
Hyunchul Park and Yimin Xiao.
\newblock Spectral heat content on a class of fractal sets for subordinate
  killed {Brownian} motions.
\newblock {\em Mathematische Nachrichten}, 296(9):4192--4205, 2023.

\bibitem[RPTWZ25]{RozanovaPierratTeplyaevWinterZaehle2025}
Anna Rozanova-Pierrat, Alexander Teplyaev, Steffen Winter, and Martina
  Z{\"a}hle.
\newblock Fractal curvatures and short-time asymptotics of heat content, 2025.

\bibitem[vdB94]{vanDenBerg1994}
Michiel van~den Berg.
\newblock Heat content and {Brownian} motion for some regions with a fractal
  boundary.
\newblock {\em Probability Theory and Related Fields}, 100(4):439--456, 1994.

\bibitem[vdBdH99]{vanDenBergDenHollander1999}
Michiel van~den Berg and Frank den Hollander.
\newblock Asymptotics for the heat content of a planar region with a fractal
  polygonal boundary.
\newblock {\em Proceedings of the London Mathematical Society}, 78(3):627--661,
  1999.

\bibitem[vdBG15]{vanDenBergGittins2015}
Michiel van~den Berg and Katie Gittins.
\newblock Uniform bounds for the heat content of open sets in euclidean space.
\newblock {\em Differential Geometry and its Applications}, 40:67--85, 2015.

\bibitem[WX23]{WymanXi2023}
Emmett~L. Wyman and Yakun Xi.
\newblock A two term {Kuznecov} sum formula.
\newblock {\em Communications in Mathematical Physics}, 401(2):1127--1162,
  2023.

\bibitem[Xi25]{XiFractalKuznecov}
Yakun Xi.
\newblock {Kuznecov} formulae for fractal measures.
\newblock arXiv preprint arXiv:2512.18379, 2025.

\bibitem[Zel92]{Zelditch1992}
Steven Zelditch.
\newblock {Kuznecov} sum formulae and {Szeg{\H{o}}} limit formulae on
  manifolds.
\newblock {\em Communications in Partial Differential Equations},
  17(1--2):221--260, 1992.

\end{thebibliography}
\end{document}